\newcommand{\oset}[3][0ex]{%
	\mathrel{\mathop{#3}\limits^{
			\vbox to#1{\kern-2\ex@
				\hbox{$\scriptstyle#2$}\vss}}}}
\newtheorem*{thm*}{Theorem}
\newtheorem{thm}{Theorem}[section]{\bf}{\it}
\newtheorem{prop}[thm]{Proposition}
\newtheorem{lemma}[thm]{Lemma}
\newtheorem{cor}[thm]{Corollary}
\newtheorem{conj}[thm]{Conjecture}
\newtheorem*{conj*}{Conjecture}
\theoremstyle{definition}
\newtheorem{dfn}[thm]{Definition}
\theoremstyle{remark}
\newtheorem{rmk}[thm]{Remark}
\theoremstyle{remark}
\newtheorem{exm}[thm]{Example}
\newtheorem{cons}[thm]{Construction}
\newtheorem{notation}[thm]{Notation}
\newcommand{\A}{\mathbb{A}}
\newcommand{\B}{\mathbb{B}}
\newcommand{\C}{\mathbb{C}}
\newcommand{\F}{\mathbb{F}}
\newcommand{\G}{\mathbb{G}}
\newcommand{\N}{\mathbb{N}}
\newcommand{\Q}{\mathbb{Q}}
\newcommand{\T}{\mathbb{T}}
\newcommand{\Z}{\mathbb{Z}}
\newcommand{\mc}{\mathcal}
\newcommand{\mcC}{\mathcal{C}}
\newcommand{\mcF}{\mathcal{F}}
\newcommand{\mcH}{\mathcal{H}}
\newcommand{\mcD}{\mathcal{D}}
\newcommand{\mcO}{\mathcal{O}}
\newcommand{\mcP}{\mathcal{P}}
\newcommand{\mcX}{\mathcal{X}}
\newcommand{\mcY}{\mathcal{Y}}
\newcommand{\mf}{\mathfrak}
\newcommand{\mfI}{\mathfrak{I}}
\newcommand{\mfP}{\mathfrak{P}}
\newcommand{\mfS}{\mathfrak{S}}
\newcommand{\mfU}{\mathfrak{U}}
\newcommand{\mfX}{\mathfrak{X}}
\newcommand{\mfY}{\mathfrak{Y}}
\newcommand{\ad}{\mathrm{ad}}
\newcommand{\an}{\mathrm{an}}
\DeclareMathOperator{\An}{An}
\DeclareMathOperator{\CAlg}{CAlg}
\DeclareMathOperator{\colim}{colim}
\DeclareMathOperator{\ct}{ct}
\newcommand{\dR}{\mathrm{dR}}
\newcommand{\eff}{\mathrm{eff}}
\DeclareMathOperator{\et}{\acute{e}t}
\newcommand{\logcrys}{\mathrm{log}\text{-}\mathrm{crys}}
\DeclareMathOperator{\divet}{div\acute{e}t}
\DeclareMathOperator{\FDA}{{FDA}}
\DeclareMathOperator{\Frob}{Frob}
\DeclareMathOperator{\FSm}{FSm}
\DeclareMathOperator{\Gal}{Gal}
\newcommand{\GK}{}%
\newcommand{\gr}{\mathrm{gr}}
\DeclareMathOperator{\Hom}{Hom}
\newcommand{\HK}{\mathrm{HK}}
\DeclareMathOperator{\id}{id}
\DeclareMathOperator{\Ind}{Ind}
\DeclareMathOperator{\logFDA}{logFDA}
\DeclareMathOperator{\logDA}{logDA}
\DeclareMathOperator{\map}{map}
\newcommand{\op}{\mathrm{op}}
\newcommand{\perf}{\mathrm{perf}}
\newcommand{\Prl}{{\rm{Pr}^{L}}} 
\newcommand{\Prlo}{{\rm{Pr}^{L}_\omega}}
\newcommand{\Prloo}{{\rm{CAlg}(\rm{Pr}^{L}_\omega)}}
\newcommand{\rig}{\mathrm{rig}}
\newcommand{\RigSm}{\mathrm{RigSm}}
\newcommand{\Sm}{\mathrm{Sm}}
\DeclareMathOperator{\Spa}{Spa}
\DeclareMathOperator{\Spec}{Spec}
\DeclareMathOperator{\Spf}{Spf}
\newcommand{\st}{\mathrm{st}}
\newcommand{\til}{\widetilde}
\DeclareMathOperator{\tr}{tr}
\DeclareMathOperator{\Ch}{{Ch}}
\DeclareMathOperator{\DA}{{DA}}
\DeclareMathOperator{\UDA}{{UDA}}
\DeclareMathOperator{\DM}{{DM}}
\newcommand{\FSch}{\mathrm{FSch}}
\newcommand{\unr}{\mathrm{nr}}
\DeclareMathOperator{\Mod}{{Mod}}
\DeclareMathOperator{\PerfDA}{{PerfDA}}
\DeclareMathOperator{\QCoh}{{QCoh}}
\DeclareMathOperator{\RigDA}{{RigDA}}
\DeclareMathOperator{\pn}{{pn}}
\newcommand{\sst}{\mathrm{ss}}
\DeclareMathOperator{\Sh}{{Sh}}
\DeclareMathOperator{\wRRig}{\widehat{{Rig}}}
\newcommand{\wRigDA}{\wRRig\!\DA}
\DeclareMathOperator{\Ho}{Ho}
\newcommand{\be}{\begin{eqnarray*}}
\newcommand{\ee}{\end{eqnarray*}}
\newcommand{\ben}{\begin{eqnarray}}
\newcommand{\een}{\end{eqnarray}}
\newcommand{\bx}{\begin{eqnarray*}\begin{xy}\xymatrix}
\newcommand{\ex}{\end{xy}\end{eqnarray*}}
\newcommand{\mr}{\mathrm}
\newcommand{\CH}{\mr{CH}}
\newcommand{\CN}{\mr{CN}}
\newcommand{\cyc}{\mr{cyc}}
\newcommand{\lcc}{(\!(}
\newcommand{\rcc}{)\!)}
\newcommand{\e}{\varepsilon}
\begin{document}
	\title{On the $p$-adic weight-monodromy conjecture for complete intersections in toric varieties}

\author[Binda]{Federico Binda}
\address{Dipartimento di Matematica ``F. Enriques'' - Universit\`a degli Studi di Milano}
\email{federico.binda@unimi.it}
\author[Kato]{Hiroki Kato}
\address{{Institut des Hautes \'Etudes Scientifiques}%
}
\email{{hiroki@ihes.fr}
}
\author[Vezzani]{Alberto Vezzani}
\address{Dipartimento di Matematica ``F. Enriques'' - Universit\`a degli Studi di Milano}
\email{alberto.vezzani@unimi.it}

\thanks{The first and third authors are partially supported by the 
 PRIN 20222B24AY ``The arithmetic of motives and $L$-functions''. %
The second author has received funding from Iwanami Fujukai Foundation and the {\it European Research Council} (ERC) under the European Union's Horizon 2020 research and innovation program (grant agreement No.\ 851146).}

\begin{abstract}
We give a proof of the $p$-adic weight-monodromy conjecture for scheme-theoretic complete intersections in projective smooth toric varieties. The strategy is based on Scholze's proof in the $\ell$-adic setting, which we adapt using  homotopical results developed in the context of rigid analytic motives.
\end{abstract}

\maketitle
\setcounter{tocdepth}{1}
\tableofcontents

\section{Introduction}
Let $K$ be a non-archimedean local field of mixed characteristic $(0,p)$ with ring of integers $\mc O_K$ and residue field $k=\F_{q}$, $q=p^a$, 
and let $K_0=W(k)[1/p]$ be the maximal unramified sub-extension of $K$ over $\Q_p$.  We let $\varphi$ denote the lift of the arithmetic Frobenius in $\Gal(K_0/\Q_p$).  Let $Y$ be a proper smooth   scheme over $K$ with a proper flat integral model $\mcY$ over $\mcO_K$. We let ${\mc Y_s}$ denote its special fiber. 
When $\mc Y$ is smooth over $\mc O_K$, the crystalline cohomology groups $H^i_{\mathrm{crys}}({\mcY}_s/W(k))[1/p]$ come naturally equipped   with a $\varphi$-semilinear endomorphism $\Phi$ called the \emph{Frobenius endormorphism}. 
The Weil conjectures for crystalline cohomology proved by Katz--Messing in \cite[Theorem 1]{KM} imply that $H^i_{\mathrm{crys}}(\mc Y_s/W(k))[1/p]$ is pure of weight $i$ (that is, the  eigenvalues of the linear operator $\Phi^a$ have complex absolute value $q^{i/2}$ via any embedding $\bar K\subset\C$). 

When $\mc Y$ is  \emph{semistable} over $\mc O_K$, a similarly well-behaved $p$-adic cohomology theory is given by Hyodo--Kato cohomology (i.e., log crystalline cohomology due to Hyodo--Kato \cite{HK94}), which we denote by $H^i_{\HK}(Y)=H^i_{\logcrys}({\mc Y_s}/W(k)^0)[1/p]$, where ${\mc Y_s}$ here refers to the special fiber of $\mcY$ equipped with its natural log structure, and $W(k)^0$ is the scheme $\Spec W(k)$ equipped with the log structure associated to the homomorphism of monoids $\N\to W(k), 1\mapsto0$. It comes naturally with a $\varphi$-semilinear Frobenius endomorphism $\Phi$ and a nilpotent endomorphism $N$ called the \emph{monodromy operator}, which satisfy the equality $N\Phi=p\Phi N$.

As Beilinson observed \cite{Bei13}, Hyodo--Kato cohomology can be viewed as a cohomology theory on the generic fiber, and it extends to the whole category of {proper and smooth }algebraic varieties over $K$\footnote{%
{In fact, Beilinson extends Hyodo--Kato cohomology even to all  varieties over $K$ (see \cite[\S 1.1]{CNCst} for a quick overview). However, we will not use this extension outside smooth and proper varieties later.}}: he defined a cohomology group $H^i_{\HK}(Y_{\bar{K}})$, which %
is a $\Q_p^{\unr}$-vector space equipped with a Frobenius and a monodromy operator as above, and an action of the Galois group $\Gal(\bar{K}/K)$. Moreover, he showed, as predicted  by Jannsen and Fontaine {\cite{Jan87}, \cite[Conjecture 6.2.1]{Fon94}}, that it  agrees with $p$-adic \'etale cohomology via Fontaine's functor: \cite[Formula 3.3.1]{Bei13} states that we have a canonical isomorphism of $(\varphi,N,G_K)$-modules
\be
H^i_{\HK}(Y_{\bar{K}})\otimes_{\Q_p^{\unr}} B_{\st}\cong H^i_{\et}(Y_{\bar{K}},\Q_p)\otimes_{\Q_p} B_{\st},
\ee
which {(together with its compatibility with his de  Rham period isomorphism of filtered modules from \cite{Bei12})} reproves the $C_\st$-conjecture in $p$-adic Hodge theory proved by Tsuji \cite{Tsu99}, Faltings \cite{Fal02}, Nizio\l \ \cite{Niz08}. %

We now recall that %
Deligne's weight-monodromy conjecture {\cite{Del71}} predicts that the $\ell$-adic \'etale cohomology group $H_{\et}^i(Y_{\bar{K}},\Q_\ell)$  is quasi-pure of weight $i$, i.e., %
on the $j$-th graded quotient $\gr^M_j H^i(Y_{\bar{K}},\Q_\ell)$ of the monodromy filtration $M_\bullet$, the action of a Frobenius-lift is pure of weight $i+j$.  
Modeled on it, the $p$-adic weight-monodromy conjecture states the following.

\begin{conj}[{\cite[Page 347]{Jan87}}]\label{pwmc}
The $i$-th Hyodo--Kato cohomology group $H^i_{\HK}(Y_{\bar{K}})$ is \emph{quasi}-pure of weight $i$, i.e.,  the $j$-th graded quotient $\gr^M_j H^i_{\HK}(Y_{\bar{K}})$ of the monodromy filtration $M_\bullet$ is Frobenius-pure  of weight $i+j$. 
\end{conj}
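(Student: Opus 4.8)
The plan is to transpose Scholze's perfectoid proof of the $\ell$-adic weight-monodromy conjecture for complete intersections in toric varieties to the $p$-adic setting, replacing $\ell$-adic \'etale cohomology by Hyodo--Kato cohomology realized through the category $\RigDA$ of rigid analytic motives. Accordingly I would prove the statement in the case treated in this paper: $Y$ is a scheme-theoretic complete intersection, smooth of dimension $n$ over $K$, sitting inside a projective smooth toric variety $X/K$. The first reduction is to the primitive middle cohomology. Since $X$ is a smooth projective toric variety it has a cellular decomposition, so $H^\ast_{\HK}(X_{\bar K})$ is a direct sum of Tate objects, hence pure with trivial monodromy; the Gysin sequence for $Y\hookrightarrow X$ together with weak Lefschetz then shows that every $H^i_{\HK}(Y_{\bar K})$ with $i\neq n$, and the non-primitive part of $H^n$, is a subquotient of $H^\ast_{\HK}(X_{\bar K})$ and so automatically quasi-pure. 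One is reduced to proving quasi-purity for the (compactly supported) Hyodo--Kato cohomology of the open complement $U=X\setminus Y$, or equivalently of its rigid-analytic generic fiber.

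Next I would build the perfectoid tower. Adjoining $p$-power roots of the torus characters yields a tower $X^{\rig}\leftarrow X_1\leftarrow X_2\leftarrow\cdots$ of finite covers whose limit $X_\infty$ is a perfectoid space over a perfectoid field $K_\infty$ with residue field $k$, and the tilt $X_\infty^\flat$ lies over the corresponding smooth toric variety $X^\flat$ defined over the equal-characteristic local field $k\lcc t\rcc\subset K^\flat$. The open $U$ pulls back to a perfectoid open $U_\infty\subset X_\infty$. Viewing the defining equations of $Y$ at a suitable finite level and using the density of $\mathcal{O}_{X_\infty^\flat}$ in $\mathcal{O}_{X_\infty}$ modulo small ideals, together with a Krasner-type stability of smooth complete intersections, one approximates them by the equations of a complete intersection $Z$ over $k\lcc t\rcc$ inside $X^\flat$, arranged so that $U_\infty$ and the perfectoid cover of $X^\flat\setminus Z$ have canonically identified cohomology, compatibly with monodromy.

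Then comes the transfer step, where the homotopical machinery is used. By continuity of $\RigDA(-)$ along the perfectoid tower the motive of $U_\infty$ is the colimit of the motives of the finite levels $U_m$, and the motivic tilting equivalence $\RigDA(X_\infty)\simeq\RigDA(X_\infty^\flat)$ identifies it with the motive coming from $X^\flat\setminus Z$ over $k\lcc t\rcc$. Applying the overconvergent Hyodo--Kato (equivalently, de Rham) realization of rigid motives, which computes $H^\ast_{\HK}(-_{\bar K})$ with its $(\varphi,N,\Gal(\bar K/K))$-structure, transports the desired quasi-purity statement for $U$ to the analogous statement over the equal-characteristic local field $k\lcc t\rcc$, where weight-monodromy for Hyodo--Kato (equivalently rigid) cohomology is known, following Deligne's $\ell$-adic argument and the comparison with rigid cohomology. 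Descending back from the infinite level $U_\infty$ to $U$ along the pro-\'etale $\mathbb{Z}_p$-power tower is then handled by a weight estimate: the Hochschild--Serre spectral sequence contributes only classes of Tate type, which cannot destroy quasi-purity.

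The main obstacle I expect is the compatibility underlying the transfer step: one must check that the motivic tilting equivalence intertwines the Hyodo--Kato realizations on the two sides, matching the Frobenius $\varphi$ and the monodromy $N$, and that this realization really does reproduce the cohomology theory of Conjecture~\ref{pwmc} with all of its extra structure; this is exactly where a sufficiently functorial Hyodo--Kato realization of $\RigDA$, its continuity for the cofiltered limit defining the perfectoid tower, and the tilting equivalence are needed --- the homotopical inputs referred to in the abstract. A secondary difficulty is performing the approximation of the second step so as to preserve smoothness and the complete-intersection property, and keeping careful track of weights both through that approximation and through the Hochschild--Serre spectral sequence.
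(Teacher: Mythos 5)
Your high-level strategy -- adapt Scholze's perfectoid argument by replacing $\ell$-adic \'etale cohomology with Hyodo--Kato cohomology realized through rigid analytic motives, using the motivic tilting equivalence and the comparison with equal-characteristic Hyodo--Kato cohomology (Crew, Lazda--P\'al) -- is exactly right, and you correctly flag the compatibility of tilting with the Hyodo--Kato realization (including $\varphi$ and $N$) as the main technical hurdle. However, the structure of your transfer argument has a genuine gap. You try to approximate $Y$ by a \emph{smooth complete intersection} $Z$ over $k(\!(t)\!)$ ``arranged so that $U_\infty$ and the perfectoid cover of $X^\flat\setminus Z$ have canonically identified cohomology, compatibly with monodromy.'' This is too strong and is not how the approximation works: Scholze's approximation argument (Proposition 8.7 and Corollary 8.8 in his paper) only produces an irreducible closed subvariety $Z$ of the \emph{same dimension} lying inside the preimage of a tubular neighborhood; there is no way to force $Z$ to be smooth, a complete intersection, or cohomologically identical. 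One then takes a smooth alteration $Z'$ of $Z$ -- which, in particular, destroys any complete-intersection structure -- and one does \emph{not} obtain an isomorphism of cohomologies; one only obtains a \emph{direct summand}. The mechanism producing the splitting is: the induced map $H^i_{\HK}(Y^\an)\to H^{\flat,i}_{\HK}({Z'}^\an)$ becomes an isomorphism in top degree $2d$ (both sides are one-dimensional by the trace map, and the map is nonzero by a cycle-class argument as in \Cref{nonvanishing}), and Poincar\'e duality on both sides then gives a $(\varphi,N)$-equivariant section. Quasi-purity is inherited by direct summands, which is all one needs.

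Your argument is also missing the ingredient that makes the approximation usable at all: a \emph{tubular neighborhood theorem}. One first replaces $Y^\an$ by a small analytic open neighborhood $\widetilde{Y}\subset X^\an_{\Sigma,\C_p}$ with the same motive (this is the content of \Cref{cor:tn} and \Cref{hubermot2}, the $p$-adic, motivic substitute for Huber's $\ell$-adic finiteness theorem, and it requires smoothness of the family), and it is inside $\pi^{-1}(\widetilde Y)$ that $Z$ is found. Instead of this, you work with the open complement $U=X\setminus Y$, reduce to primitive middle cohomology via weak Lefschetz, and propose to descend from the perfectoid level $U_\infty$ to $U$ via a ``weight estimate on the Hochschild--Serre spectral sequence.'' None of this is present in the correct argument, and the proposed descent is itself unlikely to work: there is no good Hyodo--Kato Hochschild--Serre spectral sequence for a pro-$p$ tower, and the motivic tilting equivalence already compares $\RigDA(\C_p)$ with $\RigDA(\C_p^\flat)$ directly, so no descent from the perfectoid level is needed. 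The tubular neighborhood step (and the remark that it only works for \emph{scheme-theoretic} complete intersections, since the motivic version requires smoothness of $X\to S$) is precisely what replaces your steps and makes the approximation-by-an-arbitrary-subvariety strategy legitimate.
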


To our knowledge, the known cases of the $p$-adic weight-monodromy conjecture are essentially of the following kind:  when the dimension of $Y$ is $\leq2$ as shown  by Mokrane \cite[Th\'eor\`eme 5.3, Corollaire 6.2.3]{mokrane} (see also \cite{Nakkajima}) and when  $Y$ is a   $p$-adically uniformized variety, as shown independently by de Shalit \cite{deshalit} and Ito \cite{Ito05}. The $\ell$-adic weight-monodromy conjecture is also known in the above cases \cite{RZ82,Ito05}. Moreover, it has also been proved  in the case where $Y$ is a smooth complete intersection in a projective smooth toric variety by Scholze 
in his  seminal paper \cite{scholze}, by using his theory of perfectoid spaces to reduce the problem to the case of an equal characteristic local field, which had already been proved by Deligne in \cite{Del80} (cf. \cite{Ito05b}).

We note that  the (analogue of the) $p$-adic weight monodromy conjecture in the equal characteristic case is also known: Crew \cite{crew} proved it in the globally defined case, by eventually reducing it to the Weil conjectures for crystalline cohomology  using his $p$-adic analogue of Deligne's method; and building on it, Lazda and Pal  \cite{LP16} proved the general case. %

Considering this, it is natural to ask if Scholze's strategy can be adapted also to reduce the $p$-adic weight monodromy conjecture for complete intersections to the equal characteristic case.  We provide an affirmative answer in regards to this expectation:
\begin{thm}[{\Cref{thm:main}}]\label{main thm}
	Let $K$ be a finite extension of $\Q_p$ and $Y$ be  %
	 a smooth scheme-theoretic complete intersection inside a projective smooth toric variety over $K$. Then the $p$-adic weight-monodromy conjecture holds for $Y$.
\end{thm}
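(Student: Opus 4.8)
The plan is to transplant Scholze's $\ell$-adic argument from \cite{scholze} into the $p$-adic world, using the theory of rigid analytic motives (and the associated six functors and realization functors) as the replacement for perfectoid-space techniques. The strategy has three movements. \emph{First}, reduce the conjecture for the algebraic complete intersection $Y\subset\mathbb P$ over $K$ to a statement about its rigid analytification $Y^{\mathrm{an}}$: by the comparison between algebraic and rigid Hyodo--Kato cohomology (and compatibility with Frobenius and monodromy), purity of $\gr^M_j H^i_{\HK}(Y_{\bar K})$ is equivalent to the corresponding statement for the smooth proper rigid variety $Y^{\mathrm{an}}$. \emph{Second}, following Scholze, one approximates $Y^{\mathrm{an}}$ by a tower of smooth rigid hypersurface-type complete intersections in the perfectoid cover of the toric variety, so that in the tilted/limit picture one is reduced to a complete intersection over the equal-characteristic local field $k\lcc t\rcc$ (or a finite extension thereof). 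Here the key input is a motivic incarnation of the tilting equivalence: an equivalence of the relevant categories of rigid motives over $K$ (with its perfectoid cover) and over $k\lcc t\rcc$, compatible with the Hyodo--Kato realization, so that the monodromy filtration and the Frobenius-weights are matched on both sides. \emph{Third}, invoke the known equal-characteristic case: by Crew \cite{crew} and Lazda--Pal \cite{LP16}, the $p$-adic (rigid) weight-monodromy conjecture holds for the complete intersection over $k\lcc t\rcc$, and transporting back through the equivalence yields the statement for $Y$.

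Let me be more precise about the middle step, which is where essentially all the content lies. One chooses homogeneous coordinates on the projective toric variety $\mathbb P$ and passes to the perfectoid tower $\mathbb P_\infty\to\mathbb P$ obtained by adjoining $p$-power roots of the toric coordinates; its tilt is the analogous perfectoid tower over the toric variety $\mathbb P^\flat$ defined over $k\lcc t\rcc$. The complete intersection $Y$ is cut out by sections $f_1,\dots,f_r$ of line bundles; after pulling back to $\mathbb P_\infty$ and using that $p$-power roots are available, one produces an ``untilt'' $\widetilde f_j$ of a system of defining equations $f_j^\flat$ over $\mathbb P^\flat$, at least after a controlled approximation of the $f_j$ by polynomials with enough divisibility — this is exactly Scholze's approximation lemma, and it goes through in our setting because the relevant rings of integral elements are the same. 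The upshot is a diagram relating $Y^{\mathrm{an}}$ and $(Y^\flat)^{\mathrm{an}}$ through the common perfectoid space $Y_\infty\simeq Y_\infty^\flat$, and one then feeds this into the motivic tilting equivalence of rigid analytic motives to conclude that the Hyodo--Kato cohomologies, with their $(\varphi,N)$-structure, agree. Since $Y^\flat$ is a smooth complete intersection in a projective smooth toric variety over the equal characteristic field, the equal-characteristic weight-monodromy theorem applies to it.

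The main obstacle, and the reason this is not a formal exercise, is the coherent bookkeeping of the extra structure across the tilting equivalence: one must track not just the underlying Hyodo--Kato cohomology groups but the monodromy operator $N$, the Frobenius $\Phi$ (with its $\varphi$-semilinearity and the relation $N\Phi=p\Phi N$), and the weight filtration, and verify that the motivic equivalence intertwines all of them — in particular that the ``Frobenius-weight'' of a Hyodo--Kato cohomology class is preserved, not merely its dimension. This requires that the Hyodo--Kato realization of rigid motives be functorial and monoidal in the strong sense needed, and that it be compatible with the nearby-cycle / specialization formalism on both the $p$-adic and the equal-characteristic side; assembling these compatibilities is the technical heart of the paper. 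A secondary difficulty is handling the approximation step uniformly in the tower and checking that the perfectoid complete intersections one obtains are still (smooth) complete intersections, so that \cite{crew,LP16} genuinely apply; and one must be careful that the comparison in the first step (algebraic versus rigid Hyodo--Kato cohomology, compatibly with $\varphi$ and $N$ and the Galois action) is available in the generality of Beilinson's construction. Once these pieces are in place, the deduction is exactly as in Scholze's paper, with ``perfectoid space / tilting'' replaced throughout by ``rigid analytic motive / motivic tilting equivalence.''
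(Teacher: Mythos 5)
Your overall blueprint (follow Scholze, replace perfectoid techniques with the motivic tilting equivalence, feed into the equal-characteristic result of Crew and Lazda--P\'al) is the right one, but the proposal as written has two essential gaps, one of which makes the deduction collapse.

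\textbf{The approximating variety is not a tilt of $Y$ and is not a complete intersection.} You describe the middle step as producing an ``untilt'' of a system of defining equations so that one obtains a complete intersection $Y^\flat$ over $k\lcc t\rcc$ whose Hyodo--Kato cohomology (with $(\varphi,N)$-structure) \emph{agrees} with that of $Y$. This is not what Scholze's approximation gives, and it is not what the paper does. After replacing $Y^{\an}$ by a small tubular neighborhood $\widetilde Y$ with the same motive, one finds (via Scholze's Corollary~8.8) \emph{some} irreducible closed algebraic subvariety $Z\subset X_{\Sigma,\C_p^\flat}$, defined over an algebraic extension of $\F_p\lcc p^\flat\rcc$, with $\dim Z=\dim Y$ and with $Z^{\an}\subset\pi^{-1}(\widetilde Y)$, and then one passes to a smooth alteration $Z'$. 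The variety $Z'$ is typically \emph{not} a complete intersection, and its motive is certainly not equivalent to that of $Y^{\an}$: all one gets is a morphism of motives $\Q({Z'}^{\an})^\sharp\to\Q(Y^{\an})$ coming from the inclusion into the tubular neighborhood, hence a map $\alpha\colon R\Gamma_{\HK}(Y^{\an})\to R\Gamma^\flat_{\HK}({Z'}^{\an})$. This is the reason it is crucial that the equal-characteristic WMC of Lazda--P\'al holds for \emph{all} proper smooth varieties, not just complete intersections; the reduction genuinely leaves the class of complete intersections.

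\textbf{The splitting argument is missing.} Because one only has a morphism $\alpha$, not an equivalence, the conclusion that $H^i_{\HK}(Y^{\an})$ inherits quasi-purity from $Z'$ requires showing that $\alpha^i$ is a split injection of $(\varphi,N)$-modules. The paper does this by: (i) showing via the Chern class/cycle class formalism (Corollary~\ref{nonvanishing}) that the map $H^{2d}_{\HK}(X_{\Sigma,\C_p}^{\an})\to H^{\flat,2d}_{\HK}({Z'}^{\an})$ is nonzero, hence $\alpha^{2d}$ is an isomorphism between one-dimensional spaces; and (ii) invoking Poincar\'e duality (Theorem~\ref{poincare}) to produce a $(\varphi,N)$-equivariant retraction of $\alpha^i$. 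Without this step the argument does not close: ``transporting back through the equivalence'' has no content, because there is no equivalence to transport back through. Finally, the reduction from $Y^{\an}$ to an open neighborhood with the same motive --- the motivic analogue of Huber's tubular-neighborhood theorem (Corollary~\ref{cor:tn}, Corollary~\ref{hubermot2}) --- is a genuine technical ingredient that needs to be proved (it is where the scheme-theoretic, rather than set-theoretic, complete-intersection hypothesis enters), and it is glossed over in your sketch.
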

Though Scholze proves the $\ell$-adic case under the slightly weaker assumption that $Y$ is a \emph{set-theoretic} complete intersection, we do need to assume that $Y$ is a \emph{scheme-theoretic} complete intersection (see also Sections \ref{E tub},\ref{remarks}). 
To explain the outline of the proof, we briefly recall how Scholze reduced the problem to the equal characteristic case. 

\subsection{Review of Scholze's proof}
Assume that $Y$ is a smooth complete intersection in a projective smooth toric variety $X_{\Sigma,K}$ over $K$.  We may and do replace $K$ with the perfectoid field $K_\infty$ obtained as the completion of the purely ramified extension $K(\varpi^{1/p^\infty})$, for a uniformizer $\varpi$ of $K$.  
His goal is to realize the cohomology group $H^i(Y)=H^i_{\et}(Y_{\bar K},\Q_\ell)$ as a direct summand of the cohomology group of some proper smooth variety $Z$ of equal characteristic. %
The variety $Z$ is constructed using the natural continuous map of topological spaces $\pi\colon |X^\an_{\Sigma,K^\flat}|\to |X^\an_{\Sigma,K}|$ obtained from the tilting equivalence. %
More precisely, the construction consists of the following three steps:
\begin{enumerate}[(A)]
\item\label{tub} We take, using results of Huber \cite{huber}, \cite{huber-f2}, a small (analytic) open neighborhood $\til Y$ of (the analytification of) $Y$ inside $X_{\Sigma,K}^\an$ such that $H^*(Y)\cong H^*(\til Y)$, 
\item\label{approx} We find, by an approximation argument \cite[Proposition 8.7]{scholze}, a closed algebraic subvariety inside $\pi^{-1}(\til Y)$ defined over $k(\!(\varpi^\flat)\!)$ that has the same dimension as $Y$, and then take $Z$ to be a smooth alteration of it. 
\item\label{tilt} Using the tilting equivalence of the \'etale site \cite[Theorem 7.12]{scholze}, %
we obtain canonical maps
\be
H^i(Y)\cong H^i(\til Y)\to  H^i(Z)
\ee
that  are equivariant with respect to the actions of $G_K\cong G_{K^\flat}$. 
\end{enumerate}
The latter morphism can be shown to be  invertible on the top degree: this follows from the Chern classes formalism. %
Poincar\'e duality then provides  a desired splitting. 

It is tempting to follow the same blueprint in the $p$-adic setting. Note first that Step \eqref{approx} is purely geometric and can be applied to the $p$-adic case verbatim. 
On the other hand, Steps \eqref{tub} and \eqref{tilt} of the above recipe require some  arguments that are particular to $\ell$-adic cohomology, and it is unclear a priori how to adapt them to the $p$-adic situation. 
Note that we cannot expect that this argument works for $p$-adic \'etale cohomology: the equal characteristic case due to Lazda--Pal is not about $p$-adic \'etale cohomology and Huber's existence theorem on a good tubular neighborhood cannot hold (e.g., $H^1$ is infinite-dimensional for the rigid disk). 
With this in mind, we work with Hyodo--Kato cohomology on the generic fiber. Even then, we encounter the following difficulties: in Step \eqref{tilt}, the morphism between the  \'etale sites does not directly induce a map on the Hyodo--Kato cohomology groups; %
in Step \eqref{tub}, Huber's theorem on the existence of a tubular neighborhood is only  available  for $\ell$-adic cohomology. The goal of our paper is to overcome these difficulties using homotopical methods. %

\subsection{Tilting and Hyodo--Kato cohomologies}
In Sections \ref{sec:mot} and \ref{sec:HK}, we focus on giving a $p$-adic version of Step \eqref{tilt}.  %
The key ingredient for this step is the motivic tilting equivalence established in \cite{vezz-fw}, which provides a natural way to discuss relations between tilting and $p$-adic cohomology theories (cf.\ \cite{LBV}). %
It is stated in terms of the theory of motives of rigid analytic varieties $\RigDA(K)$ introduced by Ayoub \cite{ayoub-rig}, which can be applied to our situation since {(the rigid)}  Hyodo--Kato cohomology  can be {extended} to rigid analytic varieties thanks to the work of Colmez--Nizio{\l} \cite{CN19}, and it can be shown to be motivic. The motivic tilting equivalence implies that   any ``well-behaved'' cohomology theory defined for rigid analytic varieties over $K$ can be ``tilted'' to a ``well-behaved'' cohomology theory defined for rigid analytic varieties over $K^\flat$ (and viceversa). %
 In particular, it 
allows us to ``tilt''  Hyodo--Kato cohomology $R\Gamma_\HK$ and obtain a cohomology theory $R\Gamma_\HK^\flat$ on the category of smooth rigid analytic varieties over  $\C_p^\flat$ equipped with a functorial $(\varphi,N)$-structure.  

Following Scholze's strategy, and assuming Step \eqref{tub}, we can again realize $H^i_{\HK}(Y_{\bar K})$ as a direct summand of $R^i\Gamma_{\HK}^\flat(Z^{\an}_{{\C}_p})$ for some proper smooth variety $Z$ of equal characteristic. %
Then our proof is reduced to showing that the new cohomology theory $R\Gamma_\HK^\flat$ satisfies the weight monodromy conjecture (\Cref{wmc for tilted HK}). 
For this purpose, we construct, in the case of semistable reduction, a comparison isomorphism that connects $R\Gamma^\flat_\HK$ to the classical Hyodo-Kato cohomology, which satisfies the weight monodromy conjecture by the above-mentioned result of Crew and  Lazda--Pal. %
\begin{thm}[{\Cref{cor:HKtilt}}]\label{comparison}
Let $Z$ be a smooth rigid analytic variety over  a finite extension $F$ of $k\lcc p^\flat\rcc$ (inside ${\C_p^\flat}$) that admits a semistable formal model with log special fiber $Z_0$. 
Then we have a canonical quasi-isomorphism
\be
R\Gamma_\HK^\flat(Z_{\C_p^\flat})\cong R\Gamma_\HK(Z_0/W(\kappa))\otimes_{W(\kappa)[1/p]}\Q_p^\unr,
\ee 
where $\kappa$ denotes the residue field of $F$. 
\end{thm}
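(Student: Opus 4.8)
The plan is to realize both sides of the claimed isomorphism as the value of a \emph{motivic} cohomology theory and to match those theories through the tilting equivalence. Recall from Sections~\ref{sec:mot}--\ref{sec:HK} that $R\Gamma_\HK^\flat$ is the composite
\[
\RigDA(\C_p^\flat)\ \xrightarrow{\ \mathfrak T\ }\ \RigDA(\C_p)\ \xrightarrow{\ R\Gamma_\HK\ }\ \mathcal D,
\]
where $\mathfrak T$ is the (inverse of the) motivic tilting equivalence of \cite{vezz-fw}, $R\Gamma_\HK$ is the motivic Hyodo--Kato realization built in Section~\ref{sec:HK} on top of the Colmez--Nizio{\l} construction \cite{CN19}, and $\mathcal D$ is the derived category of $(\varphi,N)$-modules over $\Q_p^\unr$. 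Fixing a semistable formal $\mathcal O_F$-model $\mathfrak Z$ of $Z$ with log special fibre $Z_0/\kappa$, the theorem amounts to a natural identification of $R\Gamma_\HK\bigl(\mathfrak T\,M(Z_{\C_p^\flat})\bigr)$ with $R\Gamma_\HK(Z_0/W(\kappa))\otimes_{W(\kappa)[1/p]}\Q_p^\unr$.

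First I would construct the equal-characteristic analogue of Colmez--Nizio{\l}'s theory: a motivic cohomology theory $R\Gamma_\HK^{\mathrm{eq}}\colon\RigDA(\C_p^\flat)\to\mathcal D$ with functorial $(\varphi,N)$-structure whose value on a smooth rigid $\C_p^\flat$-variety admitting a semistable formal model is the Hyodo--Kato cohomology of its log special fibre. The ingredients are the expected ones: de Jong-style alterations and proper descent reduce the construction to the semistable case; $\B^1$-invariance and the localization triangles are checked on the standard semistable charts $\Spf\mathcal O_F\langle x_0,\dots,x_n\rangle/(x_0\cdots x_r-\varpi)$, where log-crystalline cohomology is computed by hand and the monodromy operator is read off from the residue along the annulus --- a computation insensitive to the characteristic of $\mathcal O_F$; and flat base change for log-crystalline cohomology identifies the value on $Z_{\C_p^\flat}$ with $R\Gamma_\HK(Z_0/W(\kappa))\otimes_{W(\kappa)[1/p]}\Q_p^\unr$. (This $R\Gamma_\HK^{\mathrm{eq}}$ is the motivic incarnation of the log-rigid cohomology underlying the weight-monodromy theorem of \cite{LP16}.) With this in hand, the theorem reduces to producing a natural isomorphism $R\Gamma_\HK\circ\mathfrak T\cong R\Gamma_\HK^{\mathrm{eq}}$ of $(\varphi,N)$-valued realizations.

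For that isomorphism I would use that both realizations are governed by the (common) residue field. By the comparison of Hyodo--Kato cohomology with log-crystalline cohomology of the special fibre, over each of $\C_p$ and $\C_p^\flat$ and on the subcategory generated by the varieties with semistable formal models, the Hyodo--Kato realization is naturally the composite of a specialization functor $\RigDA\to\mathbf M$ --- with $\mathbf M$ a category of log-motives over $\bar\F_p=\overline\kappa$, sending such a variety to its log special fibre base-changed to $\bar\F_p$ --- with log-crystalline cohomology over $W(\bar\F_p)$; crucially, both $\varphi$ and $N$ live at the level of $\mathbf M$. Now the motivic tilting equivalence of \cite{vezz-fw} is built, via perfectoidization, from Scholze's tilting equivalence for perfectoid spaces, which induces the identity on special fibres modulo $p$ (since $\mathcal O_{\C_p}/p\cong\mathcal O_{\C_p^\flat}/p^\flat$). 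Hence $\mathfrak T$ intertwines the two specialization functors up to natural isomorphism, and postcomposing with the same log-crystalline cohomology functor over $W(\bar\F_p)$ gives $R\Gamma_\HK\circ\mathfrak T\cong R\Gamma_\HK^{\mathrm{eq}}$; compatibility with $\varphi$ and $N$ is then automatic, as on $\mathbf M$ these are literally the same data on the two sides.

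I expect the principal obstacle to be making this last step precise and canonical. The motivic tilting equivalence is a priori only an abstract equivalence of large categories, so one must unwind its perfectoid construction to see that it descends to, and is compatible with, the formal-model-level specialization functor --- and that the resulting identification is canonical enough to transport the monodromy operator $N$, which, unlike the remaining de Rham data, is invisible to $\B^1$-homotopy invariance and must be carried along explicitly. A secondary technical point is the passage to the non-discretely valued bases $\C_p$ and $\C_p^\flat$: one has to check that ``semistable formal model'' and its Hyodo--Kato cohomology behave well under $\mathcal O_F\to\mathcal O_{\C_p^\flat}$, and that the resulting base change indeed matches $-\otimes_{W(\kappa)[1/p]}\Q_p^\unr$ --- a genuine flat base change for (log-)crystalline cohomology rather than a formal manipulation.
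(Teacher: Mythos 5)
Your overall strategy is aligned with the paper's, but the crux of the argument is precisely the step you yourself flag as the principal obstacle, and it is left unproven. You correctly reduce the statement to exhibiting a natural isomorphism $R\Gamma_\HK\circ\mathfrak T\cong R\Gamma_\HK^{\mathrm{eq}}$, and you correctly observe that both sides should ``be governed by the common residue field''; but the justification offered --- that Scholze's tilting ``induces the identity on special fibres modulo $p$'' --- does not by itself produce a natural transformation between the two log Monsky--Washnitzer functors $\logDA^{\sst}(\bar k^0)\to\RigDA(\C_p)$ and $\logDA^{\sst}(\bar k^0)\to\RigDA(\C_p^\flat)$ compatible with the motivic tilting equivalence. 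A smooth rigid variety over $\C_p$ (or over $\C_p^\flat$) has no canonical log special fibre: one must choose a semistable formal model, and the tilt of a variety admitting a semistable formal model over $\mathcal O_{\C_p^\flat}$ is not given to you as the generic fibre of a semistable formal model over $\mathcal O_{\C_p}$. Making the compatibility precise is exactly the content of Proposition~\ref{prop:comm0} (and Corollary~\ref{prop:comm}) in the paper, whose proof requires two nontrivial ingredients that your sketch does not supply: the log-motivic localization equivalence $\logFDA^{\mathrm v}(\mathcal O_K^\times)\simeq\logDA^{\mathrm v}(k^0)$ of Theorem~\ref{FDA=DA} (a logarithmic extension of the Morel--Voevodsky localization theorem), which is what actually allows one to replace a rigid motive with a log motive over the residue field; and the expression of the motivic tilting equivalence via the Fargues--Fontaine curve and Frobenius homotopy fixed points (from \cite{LBV}), used to verify the commutativity of the resulting square.

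Once Proposition~\ref{prop:comm0} is available, the paper's proof is much shorter than what you propose: there is no need to independently construct an equal-characteristic realization $R\Gamma_\HK^{\mathrm{eq}}$ via alterations and descent. Instead, by Proposition~\ref{prop:comm0} one has $\Q(Z_{\C_p^\flat})^\sharp\cong\xi\,\Q(Z_0\times_{\kappa^{(e)}}\bar k^0)$ directly, and then $R\Gamma_\HK^\flat(Z_{\C_p^\flat})\cong R\Gamma_\HK(Z_0\times_{\kappa^{(e)}}\bar k^0)$ by the very construction of the realization (Remark~\ref{diagram of realizations}), which unwinds to the right-hand side by Definition~\ref{GK2}. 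Note also that your ``flat base change'' step conceals a normalization issue: the actual statement~\ref{cor:HKtilt} involves the $(\varphi,N)$-module $R\Gamma_\HK^{(e)}$ with monodromy rescaled by the ramification index $e$ of $F/k\lcc p^\flat\rcc$; this arises from the choice of chart $\N\to\Gamma$, $1\mapsto |\varpi_K|$, and must be tracked if one wants the $(\varphi,N)$-equivariance rather than merely an isomorphism of underlying complexes.
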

We remark that the good reduction case (i.e., non-log case) has been already proved %
in \cite{vezz-tilt4rig}. {In order to compare  cohomology theories on the generic fiber and on the special fiber, }the key point of \emph{loc.\ cit.\ }is the construction of the ``motivic Monsky--Washnitzer'' functor $\DA(k)\to\RigDA(K)$ from the category of algebraic motives   to that of rigid analytic motives. Its definition is based on the invariance of motives under nilpotent thickenings \cite[Corollaire 1.4.24]{ayoub-rig}, which essentially follows from the localization theorem of Morel--Voevodsky. In \cite{vezz-tilt4rig}, it is shown that the motivic Monsky--Washnitzer functor is compatible with the motivic tilting equivalence. Then the good reduction case is obtained by a suitable $p$-adic realization on $\RigDA(K)$. 

In order to generalize this strategy to the \emph{log} case, we introduce the category of log (formal) motives  with respect to the strict-\'etale topology and rational coefficients,   and show the following:
\begin{thm}[\Cref{FDA=DA}, \Cref{prop:comm0} ]
\begin{enumerate}
\item 	Let $\mfS$ be a quasi-coherent integral log formal scheme of finite Krull topological dimension. The special fiber functor induces an equivalence 
	\be
	\logFDA^{}(\mf S)\stackrel{\sim}{\to} \logDA^{}(\mf S_s),
	\ee
	between the category of log formal motives over $\mf S$ and that of log motives over the special fiber $\mf S_s$. 
	\item Let $K$ be a perfectoid field with residue $k$. Then the diagram
   \be
   \begin{tikzcd}
        \logFDA^{\mathrm{v}}(\mcO_{K}^\times) \arrow[d, "\xi_K"] \arrow[r, "\sim"] & \logDA^{\mathrm{v}}(k^0) &  \logFDA^{\mathrm{v}}(\mcO_{K^\flat}^\times)  \arrow[l, swap ,"\sim"]  \arrow[d, "\xi_{K^\flat}"]\\ 
        \RigDA(K) \arrow[dash, rr, "\sim"] && \RigDA(K^\flat)
    \end{tikzcd}
    \ee
	is commutative up to an invertible natural transformation. 
	\end{enumerate}
\end{thm}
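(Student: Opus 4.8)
The plan is to handle the two parts in sequence: part (1) is an infinitesimal‑invariance statement in the style of Ayoub's \cite[Corollaire 1.4.24]{ayoub-rig}, and part (2) is a compatibility of functors whose proof rests on part (1) and is the logarithmic analogue of the good‑reduction argument of \cite{vezz-tilt4rig}.

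\emph{Part (1).} The relevant functor is the special fiber functor $s\colon\mfX\mapsto\mfX\times_{\mfS}\mfS_s$ from log smooth formal $\mfS$-schemes to log smooth $\mfS_s$-schemes. I would first check that $s$ is continuous and cocontinuous for the strict-\'etale topologies: since $\mfS_s\hookrightarrow\mfS$ is a thickening defined by an ideal of definition, topological invariance of the small \'etale site, together with the fact that a strict-\'etale cover of a log formal scheme is exactly an \'etale cover of the underlying formal scheme carrying the pulled-back log structure, shows that strict-\'etale covers of $\mfX$ and of $\mfX_s$ are in canonical bijection; hence $s$ induces an adjunction on strict-\'etale hypersheaves of spectra. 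I would then prove the essential point: $s$ is essentially surjective up to strict-\'etale refinement, i.e.\ every log smooth $\mfS_s$-scheme lifts, strict-\'etale-locally, to a log smooth formal $\mfS$-scheme. This is where K.\ Kato's deformation theory of log smooth morphisms is used: log smooth morphisms are formally log smooth, so over an affine carrying a chart a log smooth $\mfS_s$-scheme lifts compatibly across every finite-order thickening of $\mfS_s$ inside $\mfS$ (the obstruction lies in an $H^2$ of a coherent module, vanishing affine-locally, and lifts of a given chart form a torsor under an $H^1$), and these lifts assemble, upon $\varpi$-adic completion, into a log smooth formal $\mfS$-scheme. Combined with the first step this produces an equivalence of the strict-\'etale $\infty$-topoi (the finite Krull topological dimension hypothesis is what keeps these topoi well-behaved — of finite cohomological dimension, hence hypercomplete — so the sheaf and hypersheaf localisations agree). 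Finally I would observe that this equivalence is compatible with the two remaining Bousfield localisations cutting out $\logFDA$ and $\logDA$, namely $\B^1$- (equivalently $\A^1$-) invariance and $\mathbb{P}^1$-stabilisation: the special fiber of the relative formal affine line over $\mfS$ is $\A^1_{\mfS_s}$ and that of the relative formal projective line is $\mathbb{P}^1_{\mfS_s}$, so the interval object and the Tate twist match. This gives $\logFDA(\mfS)\xrightarrow{\sim}\logDA(\mfS_s)$. The main obstacle here is exactly the logarithmic lifting in the essential-surjectivity step, and gluing the strict-\'etale-local lifts into a formal scheme.

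\emph{Part (2), reduction.} Apply part (1) with $\mfS=\mcO_K^\times=\Spf\mcO_K$ equipped with its canonical log structure (special fiber $k^0$), and likewise over $\mcO_{K^\flat}$, noting that $K$ and $K^\flat$ share the residue field $k$. Then the logarithmic motivic Monsky--Washnitzer functor $\xi_K$ — induced by the rigid generic fiber $\mfX\mapsto\mfX_\eta$, the $\mathrm{v}$-decoration being arranged precisely so that this lands in the non-logarithmic category $\RigDA(K)$ — factors as the equivalence $\logFDA^{\mathrm v}(\mcO_K^\times)\xrightarrow{\sim}\logDA^{\mathrm v}(k^0)$ followed by a colimit-preserving functor $\bar\xi_K\colon\logDA^{\mathrm v}(k^0)\to\RigDA(K)$, and similarly for $\xi_{K^\flat}$. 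Chasing the diagram, the asserted commutativity up to invertible natural transformation becomes the claim that $\mathrm{tilt}\circ\bar\xi_K$ and $\bar\xi_{K^\flat}$, as colimit-preserving functors $\logDA^{\mathrm v}(k^0)\to\RigDA(K^\flat)$, are canonically equivalent, where $\mathrm{tilt}$ is the motivic tilting equivalence of \cite{vezz-fw}. Since $\logDA^{\mathrm v}(k^0)$ is generated under colimits by the motives of log smooth $k^0$-schemes, and a natural transformation between colimit-preserving functors out of a presentable $\infty$-category is determined by its restriction to any family of generators, it suffices to produce a natural equivalence of the two functors on such motives. Moreover, by strict-\'etale hyperdescent together with Kato's structure theorem — a log smooth $k^0$-scheme is, strict-\'etale-locally, \'etale over the toric log scheme attached to an injection $\N\hookrightarrow P$ of fine saturated monoids — one may shrink the generating family to these ``standard toric'' schemes and their strict-\'etale neighbourhoods, on which all the functors involved are explicit.

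\emph{Part (2), the toric computation.} On a standard toric $W$ there is a canonical log smooth formal lift over $\mcO_K^\times$, namely $\Spec\mcO_K[P]$ with the monomial of $1\in\N$ set equal to $\varpi$, then $\varpi$-adically completed and pulled back along the chosen \'etale map; by construction it represents the image of the motive of $W$ under the inverse of the equivalence of part (1), and the same combinatorial recipe over $\mcO_{K^\flat}$ produces the corresponding lift there. Hence $\bar\xi_K$ and $\bar\xi_{K^\flat}$ carry the motive of $W$ to the motives of the associated rigid toric varieties over $K$ and over $K^\flat$ respectively, and the same holds over strict-\'etale neighbourhoods by functoriality of the generic fiber along \'etale maps. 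After adjoining $p$-power roots of the toric coordinates these rigid toric varieties are covered by perfectoid toric varieties over $K$ whose tilts, by Scholze's explicit description, are the analogous perfectoid toric varieties over $K^\flat$ — matching $\varpi^{1/p^\infty}$ with $(\varpi^\flat)^{1/p^\infty}$ and the torus coordinates identically — so $\mathrm{tilt}$ sends the motive of the $K$-toric variety to that of the $K^\flat$-toric variety; this is exactly the motivic tilting equivalence of \cite{vezz-fw} read off on these generators, together with its naturality in $P$ and in \'etale maps. Assembling these equivalences over the generating family, compatibly with the (combinatorial) morphisms between generators, yields the required invertible natural transformation, and hence the commutativity of the diagram. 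The main obstacle in part (2) is this last identification and the surrounding bookkeeping: verifying that the abstractly constructed motivic tilting equivalence really restricts to the evident combinatorial matching on toric generators, that the reduction to those generators via strict-\'etale hyperdescent and Kato's local structure is legitimate and compatible with passing to lifts and to generic fibers, and that all the resulting identifications are natural.
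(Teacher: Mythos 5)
Your proof of part (1) has a genuine gap in the step where you pass from essential surjectivity (up to strict-\'etale refinement) to ``an equivalence of the strict-\'etale $\infty$-topoi.'' The special-fiber functor $s\colon\Sm/\mfS\to\Sm/\mfS_\sigma$ is not fully faithful: it is \emph{full} \'etale-locally (because log smooth implies formally log smooth, so morphisms lift), but it is very far from faithful. For a concrete example, $x\mapsto x$ and $x\mapsto x+\varpi$ are distinct endomorphisms of $\A^1_\mfS$ over $\mfS$ that restrict to the same endomorphism of $\A^1_{\mfS_\sigma}$, where $\varpi$ is any topologically nilpotent element. Consequently the Yoneda presheaves on the two sites genuinely differ, so the \'etale (hyper)sheaf topoi are not equivalent and the argument collapses. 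The equivalence you want is a statement about the motivic categories and it \emph{needs} the $\A^1$-localisation: the content is precisely that the space of lifts of a given morphism to the special fiber is $\A^1$-contractible. This is the Morel--Voevodsky localisation argument, and it is what the paper carries out: after reducing to representables, it shows that the presheaf $T_{(\mfX,s)}$ of morphisms lifting a fixed section $s\colon\mfS_\sigma\to\mfX$ is $(\et,\A^1)$-equivalent to $\Q$, using Kato's factorisation of the section into a strict closed immersion followed by a log \'etale morphism to reduce to the strict case (where the classical Morel--Voevodsky argument applies). As a secondary remark, your route to essential surjectivity via obstruction theory is heavier than necessary: the paper gets it directly from topological invariance of the small \'etale site together with Kato's chart-theoretic characterisation of log smooth morphisms.

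For part (2) your strategy is genuinely different from the paper's. The paper realises the tilting equivalence by passing over (an open of) the Fargues--Fontaine curve and the Frobenius homotopy fixed points, and then exhibits compatibility by interpolating through the log formal schemes $(A_{\inf},M_{K^\flat})$ and $(B^+_{[0,\varepsilon]},M_{K^\flat})$; the commutativity then comes from \Cref{iotaiseq} applied to these interpolating bases. Your proposal instead reduces to generators of $\logDA^{\rm v}(k^0)$ and computes on standard toric pieces. This could in principle work, but two things are underspecified: (i) after restricting to toric log schemes and their strict-\'etale neighbourhoods you have thrown away the non-combinatorial morphisms and the descent data, so you cannot simply ``assemble'' a natural transformation between colimit-preserving functors from its values on these objects --- you would have to carry the full (hyper)descent and naturality data, which is where the real work is; and (ii) the key assertion that the abstract motivic tilting equivalence of \cite{vezz-fw} agrees with the ``evident combinatorial matching'' on motives of toric varieties is exactly what needs to be proven, and you offer no argument for it beyond Scholze's identification of perfectoid toric tilts (which is a statement about adic spaces, not directly about $\RigDA$). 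Moreover, since your part (2) invokes part (1) as a black box, it inherits the gap above. The Fargues--Fontaine route avoids both issues because it realises the tilting equivalence as a composite of honest motivic pullbacks and fixed-point constructions, to which the equivalence of part (1) can be applied base-by-base.
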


In the theorem above, we denote by $\mcO^\times$ the  formal scheme $\Spf\mcO$ equipped with the natural log structure, and by $k^0$ the log scheme $\Spec k$ equipped with the pullback log structure.   %
The superscript $\mathrm{v}$ refers to the full subcategory of \emph{vertical} log motives (which have a trivial log structure on the generic fiber) and $\xi$ the functor induced by the rigid analytic generic fiber. By putting $K=\C_p$ we obtain the comparison isomorphism in \Cref{comparison} as a realization of this commutative diagram. Thus, our task has been reduced to giving a $p$-adic analogue of Step \eqref{tub}.

\subsection{Existence of a good tubular neighborhood}\label{E tub}
In \Cref{sec:tube} we give a motivic   version of Huber's theorem on the existence of a good tubular neighborhood of $Y$. %
Let $\B_K^n$ denote the $n$-dimensional rigid poly disk over a non-archimedean field $K$. 
\begin{thm}[{\Cref{cor:tn}}]\label{intro:tube}
	Let $X\to S$ be a qcqs smooth morphism  of smooth rigid analytic varieties over $K$. Let $s$ be a $K$-rational point of $S$ and let $X_s$ denote the fiber of $X$ over it. Then, for any sufficiently small open neighborhood $U$ of $s$ that is isomorphic to $\B^N_K$, the natural morphism from the motive of $X_s$ to that of $X\times_SU$ in $\RigDA(K)$ is invertible. %
\end{thm}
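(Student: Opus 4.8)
The statement asserts that if $X \to S$ is a qcqs smooth morphism of smooth rigid analytic varieties and $s \in S(K)$, then for a sufficiently small polydisk neighborhood $U = \B^N_K$ of $s$, the motive of $X_s$ maps isomorphically to the motive of $X \times_S U$. The plan is to reduce everything to $\B^1$-invariance of $\RigDA(K)$, exploiting that $U$ is contractible-in-motives and that a smooth map can be trivialized over such a small neighborhood up to the kind of data $\RigDA$ cannot see.

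First I would observe that since $U \cong \B^N_K$ is $\B^1$-equivalent to a point, the structure map $U \to \operatorname{Spec} K$ (i.e.\ the final object) induces an isomorphism on motives; hence $M(X_s) \to M(X \times_S U)$ being invertible is equivalent to the statement that the projection $X \times_S U \to X_s$ (once one produces it) induces an isomorphism on motives, or more robustly, that the inclusion of the fiber $X_s \hookrightarrow X\times_S U$ over the center $s \in U$ is a motivic equivalence. The natural approach is to show that $X \times_S U \to X_s$ can be compared, after possibly shrinking $U$, to a projection $X_s \times_K U' \to X_s$ for some polydisk $U'$ — i.e.\ to \emph{locally trivialize} the smooth family over the small polydisk. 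For a smooth morphism one has local factorizations through étale maps to relative polydisks $X \to S \times \B^d \to S$ (the rigid-analytic analogue of the standard smooth-to-étale-over-affine-space statement); étale maps are, after further shrinking the base around the point $s$ and its preimage, isomorphisms locally on source and target by the rigid-analytic inverse function theorem / Huber's results on the structure of étale morphisms. So Zariski(-Nisnevich)-locally on $X_s$ one gets $X \times_S U \cong X_s \times_K \B^d_K$ over $X_s$.

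The key step is then to globalize this: one covers $X_s$ by finitely many opens (using qcqs) on each of which the family becomes a trivial polydisk bundle after shrinking $U$, and one checks that these local trivializations can be chosen compatibly enough — or rather, one uses a Mayer–Vietoris / descent argument in $\RigDA(K)$ to reduce the claimed isomorphism to the local case, where it is just $\B^1$-invariance applied fiberwise. Concretely, I expect to argue by induction on the size of a finite cover $\{V_i\}$ of $X$ adapted to $s$: the motive functor sends the relevant covers to homotopy colimit (Čech) diagrams, and on each piece $V_i \times_S U \to V_{i,s}$ and each intersection the map is a motivic equivalence by the local triviality; one has to be a little careful that ``sufficiently small $U$'' can be chosen uniformly across the finitely many pieces, which is fine since there are finitely many conditions to satisfy.

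The main obstacle will be the globalization/uniformity step: producing the local trivializations is essentially classical rigid geometry, but ensuring that a single polydisk neighborhood $U$ of $s$ works simultaneously for a finite cover of $X_s$ — and that the transition data between local trivializations, which need not be trivial, nonetheless disappears after passing to motives — requires combining the $\B^1$-invariance of $\RigDA(K)$ with a careful descent/continuity argument in the spirit of the quasi-compactness reductions used for tubular neighborhoods. In particular one must handle the fact that ``small enough'' neighborhoods form a cofiltered system and that $\RigDA$ (or its relevant overcategory $\RigDA(U)$) behaves well under such limits, so that the isomorphism at the ``limit point'' $s$ propagates to a genuine small $U$; this continuity of $\RigDA$ along shrinking neighborhoods — rather than the purely local geometry — is where the real work lies.
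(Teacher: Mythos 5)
Your local-trivialization-plus-descent framework has a real gap that the paper's proof is built to avoid. Your trivializing opens $V_i\subset X$ with $V_i\times_SU\cong(V_i)_s\times U$ are produced locally around points of $X_s$, and by quasi-compactness of $X_s$ they give a finite cover of $X_s$; but the descent/Mayer--Vietoris step needs them to cover $X\times_SU$, not merely $X_s$. Without a properness hypothesis on $X\to S$, the complement $Z$ of $\bigcup_iV_i$ in $f^{-1}(U')$ is disjoint from $X_s$ but its image in $S$ need not be closed, so $s$ can lie in its closure and $f^{-1}(U)$ meets $Z$ for every polydisk neighborhood $U\ni s$ --- the trivializing cover never ``catches up.'' The statement being proved has no properness hypothesis (and the application in \Cref{hubermot2} genuinely concerns non-proper morphisms). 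In addition, the assertion that the transition data between local trivializations ``disappear after passing to motives'' is a real claim that your sketch does not justify; it would itself need an argument.

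You do correctly flag at the end that ``continuity of $\RigDA$ along shrinking neighborhoods is where the real work lies'' --- and in fact the paper's proof is exactly and only that; no local trivialization of the family is ever invoked. The ingredients are: (i) compactness of $\Q_S(X)$ in $\RigDA(S)$, which is where the qcqs hypothesis enters; (ii) the spreading-out equivalence $\RigDA(K)^{\ct}\cong\varinjlim_V\RigDA(V)^{\ct}$ over affinoid neighborhoods $V$ of $s$; and (iii) the observation that $\Pi^*$ (pullback along $S\to\Spa K$) is an explicit quasi-inverse, where $\B^1$-invariance is used only to see that $\Pi^*$ is fully faithful, since the $V$ are eventually polydisks. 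From (ii)--(iii) it follows directly that $\Q_S(X)|_U\cong\Pi^*s^*\Q_S(X)$ for $U$ small, i.e.\ $\Q_U(X\times_SU)\cong\Q_U(X_s\times_KU)$ in $\RigDA(U)$; applying $\Pi_\sharp$ and contractibility of $U\cong\B^N_K$ finishes. The local structure of smooth or \'etale morphisms is entirely bypassed, and so is the covering and gluing issue above. You should regard the continuity theorem not as a technical patch for the geometric argument but as the whole proof.
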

We prove this as a consequence of the ``spreading out" property of rigid analytic motives shown in \cite[Theorem 2.8.15]{AGV}.  %
In particular,  we can take a tubular neighborhood that does not change the $\ell$-adic cohomology groups independently on $\ell$, which reproves/generalizes, with a completely different method, a smooth intersection case of the main result of \cite{Ito20}, in which this $\ell$-independence property is proved using the theory of nearby cycles over general bases. 

\subsection{Remarks on the  proof of the main theorem}\label{remarks} In \Cref{sec:proof} we finally put together the above ingredients to prove our main theorem. 
Note that,  since our proof of \Cref{main thm} is motivic, it can be applied to the $\ell$-adic setting as well, in which case it essentially coincides with Scholze's. Nonetheless, note that \Cref{intro:tube} requires the morphism $X\to S$ to be \emph{smooth}, contrarily to what is shown by Huber.  %
This is the reason why our methods only allow us to get the weight-monodromy conjecture for \emph{scheme}-theoretic complete intersections; not for set-theoretic complete intersections as in \cite{scholze} (but, see \Cref{rmk:hope}). 

Finally, we also remark that a motivic approach allows us to define a version of Hyodo--Kato cohomology purely on the generic fiber, without making any reference to log schemes or the log-de Rham Witt complex (\Cref{sec:bond}). This is coherent with Fontaine's initial expectation that the monodromy operator should be defined directly on a $\Q_p^{\unr}$-model of the de Rham cohomology, using rigid analytic techniques \cite[Remarque 6.2.11]{Fon94}. This construction is a consequence of the description of rigid analytic motives in terms of modules over the residue field, proved in \cite{AGV}, by which any compact motive in $\RigDA(K)$ can be seen as a motive over $\G_{m,k}$ (up to a finite field extension). The Frobenius-action and the monodromy operator arise naturally from the motivic nearby cycle functor (paired up with the notion of weight-structures studied by Bondarko), giving a unified motivic version on the $\ell$-adic and $p$-adic Steenbrick complexes (see also \cite{BGS}).  Such an intepretation will be further developed in a future paper.

\section*{Acknowledgments}
We thank K\k{e}stutis \v{C}esnavi\v{c}ius for having pointed out to us \Cref{geoconn} and Joseph Ayoub for the numerous discussions on  \Cref{motmon}. We also thank Veronika Ertl and Wies{\l}awa Nizio{\l} for their precious suggestions on \Cref{subsec:HK}, Fabrizio Andreatta, Teruhisa Koshikawa and Doosung Park for providing useful references on log geometry and $\varphi$-modules, {and the anonymous referee for their useful suggestions.}
The second author also thanks Ahmed Abbes, Kazuhiro Ito, Takeshi Saito, Atsushi Shiho, Kazuki Yamada, Zijian Yao and Yifei Zhao for helpful discussions, and expresses his gratitude to K\k{e}stutis \v{C}esnavi\v{c}ius  for his interest and constant encouragement.

\section{Log motives and rigid motives}\label{sec:mot}
In this section we introduce and study categories of motives that will be relevant throughout the paper.
\subsection{Preliminaries on log structures}
Even though we are ultimately interested in  (semistable) logarithmic schemes over the residue field of $k$, we need some general facts on motives attached to log (formal) schemes. {From now on, all formal schemes are assumed to be  adic of finite ideal type (see \cite[Definitions I.1.1.14 and I.1.1.16]{fujiwara-kato}).  }

		Our general references for log geometry are \cite{Kato89,Ogu} for fine log structures and \cite{Kos20} for  (not necessarily fine) log (formal) schemes.  	In particular, our log structures are always defined on the small \'etale sites, i.e., a log (formal) schemes is a pair $\mfX=(\underline{\mfX},M_{\mfX})$ with $\underline{\mfX}$ being a (formal) scheme and $M_{\mfX}$ being a sheaf of monoid{s} on the small \'etale site $\underline{\mf X}_{\et}$ together with $\alpha\colon M_{\mf X}\to \mc O_{\underline{\mf X},\et}$ that induces {an }isomorphism $\alpha^{-1}(\mc O^*_{\underline{\mf X},\et})\cong \mc O^*_{\underline{\mf X},\et}$. %
		We will only consider quasi-coherent, \emph{integral} log structures (\cite[Section 2]{Kato89}). %
		
		If $\mfX=(\underline{\mfX},M_{\mfX})$ is a log formal scheme, %
        the morphism of \'etale sites $\underline{\mfX}^{\rig}_{\et}\to\underline{\mfX}_{\et}$ induces a log structure $M_{\mf X^\rig}\to \mc O_{\underline{\mf X}^\rig,\et}$ %
		on the generic fiber $\underline{\mf X}^\rig$ of $\underline{\mfX}$ %
		(see \cite[Section II.3.2.9]{fujiwara-kato} for the definition of the structure sheaf). 
		One can find further information on log rigid analytic spaces in \cite{DLLZ19}, though they will not play any  role in our paper. 

		\begin{dfn}\label{def:form}
	Let $\mfS$ be an %
	integral quasi-coherent log (formal) scheme. %
	We say that a morphism $f\colon \mfX\to\mfS$ is   \emph{smooth} if, \'etale locally on $\underline{\mfX}$ and $\underline{\mfS}$, there exists a fine  log structure $M_{0}$ on $\underline{\mf S}$ with a morphism $M_0\to M_{\mfS}$, a %
	log smooth morphism  $(\underline{\mfX},M_{\mfX0})\to (\underline{\mfS},M_0)$, and an isomorphism $(\underline{\mfX},M_{\mfX0})\times_{(\underline{\mfS},M_0)}\mfS\cong \mf X$ over $\mf S$, where the fiber product is taken in the category of integral log (formal) schemes. 

		We let  $\Sm/\mfS$  denote the category of log smooth log (formal) schemes over $\mfS$. 
	This category can be equipped with the strict-\'etale topology, which we denote by $\et$.
\end{dfn}
\begin{prop}\label{smmaps}
For any morphism $f\colon \mfX\to\mfX'$ in $\Sm/\mfS$, \'etale locally on $\underline{\mfX}$, $\underline{\mfX}'$, and $\underline{\mfS}$ there is a fine   log structure $M_0$ on $\underline{\mfS}$ equipped with a morphism $M_0\to M_{\mfS}$, and  a morphism   $(\underline{\mfX},M_{\mfX,0}) \to(\underline{\mfX}',M_{\mfX',0}) $ in $\Sm/(\underline{\mfS},M_0)$ such that $f$ is its pullback along   $M_0\to M_{\mfS}$. 
\end{prop}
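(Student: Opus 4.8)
The plan is to reduce everything to the existence of charts for morphisms, which is guaranteed \'etale-locally by \Cref{rem:colimit_log_str2}, and then to invoke the fact that the fiber product defining smoothness already carries the needed descent data. First I would fix a point of $\underline{\mfX}$ and work \'etale-locally on all three of $\underline{\mfX}$, $\underline{\mfX}'$, and $\underline{\mfS}$ so that, by \Cref{def:form} applied to $\mfX'\to\mfS$, there is a fine sub-chart $M_0'\subseteq\Gamma(\underline{\mfS},M_{\mfS})$, a log smooth morphism $(\underline{\mfX}',M_{\mfX',0})\to(\underline{\mfS},M_0')$ and an isomorphism $(\underline{\mfX}',M_{\mfX',0})\times_{(\underline{\mfS},M_0')}\mfS\cong\mfX'$ over $\mfS$; likewise for $\mfX\to\mfS$ with a fine sub-chart $M_0''$. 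Since the poset of fine sub-charts of $\Gamma(\underline{\mfS},M_{\mfS})$ is filtered (a finitely generated submonoid containing both $M_0'$ and $M_0''$, then saturated/integralized, is again fine), I may enlarge and take a single fine $M_0$ dominating both, replacing the log smooth models by their base changes along $M_0'\to M_0$ and $M_0''\to M_0$, which remain log smooth over $(\underline{\mfS},M_0)$.

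Next I would produce the morphism between the fine models. The composite $\underline{\mfX}\to\underline{\mfX}'$ of underlying (formal) schemes together with the chart $M_0$ on the common base gives, after a further \'etale localization on $\underline{\mfX}$ and $\underline{\mfX}'$, a chart for $f$ in the sense of \Cref{rem:colimit_log_str2}; concretely one gets a morphism of fine log (formal) schemes $(\underline{\mfX},M_{\mfX,0})\to(\underline{\mfX}',M_{\mfX',0})$ over $(\underline{\mfS},M_0)$ whose base change along $M_0\to M_{\mfS}$ recovers $f\colon\mfX\to\mfX'$ — here one uses that base change commutes with composition and that the fiber products $(\underline{\mfX},M_{\mfX,0})\times_{(\underline{\mfS},M_0)}\mfS$ and $(\underline{\mfX}',M_{\mfX',0})\times_{(\underline{\mfS},M_0)}\mfS$ are, by the previous paragraph, identified with $\mfX$ and $\mfX'$ compatibly. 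That the morphism of fine models lies in $\Sm/(\underline{\mfS},M_0)$ is exactly the statement that $(\underline{\mfX},M_{\mfX,0})\to(\underline{\mfS},M_0)$ is log smooth, which we arranged, so smoothness over the base is inherited.

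The main obstacle I anticipate is bookkeeping the compatibility of the two separately-chosen charts $M_0'$, $M_0''$ and the isomorphisms $(\underline{\mfX}',M_{\mfX',0})\times_{(\underline{\mfS},M_0')}\mfS\cong\mfX'$: one must check that after passing to the common refinement $M_0$ these isomorphisms can be chosen so that $f$ is genuinely the base change of a morphism of the $M_0$-models, rather than merely becoming such a base change after yet another independent localization. This is handled by noting that a morphism of log (formal) schemes over $\mfS$ between objects pulled back from $(\underline{\mfS},M_0)$ is, \'etale-locally, itself pulled back: indeed the relevant $\Hom$-sheaf on $\underline{\mfX}$ is computed from the $M_0$-level data because the log structures in question are, locally, extensions of scalars along $M_0\to M_{\mfS}$, and \Cref{rem:colimit_log_str} lets us express the morphism through a chart compatible with $M_0$. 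Everything else — filteredness of fine sub-charts, stability of log smoothness under base change of the base chart, and the identification of iterated fiber products — is routine, so the proof reduces to this single descent-of-morphisms point.
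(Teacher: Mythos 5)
The proposal correctly identifies that the crux is a descent-of-morphisms problem, but the handling of that point contains a genuine error. Your claim that ``a morphism of log (formal) schemes over $\mfS$ between objects pulled back from $(\underline{\mfS},M_0)$ is, \'etale-locally, itself pulled back'' is false as stated: \'etale localization alone is not enough, one must also enlarge the fine sub-chart $M_0$ to a possibly larger fine sub-chart inside $\Gamma(\underline{\mfS},M_{\mfS})$. To see the problem concretely, reduce (as the paper does) to the monoid level: with charts $M_0\to N_0$ and $M_0\to N'_0$ for the two log smooth morphisms, the pullbacks to the ambient log structure are the integral pushouts $N_\infty=N_0\oplus_{M_0}M_\infty$ and $N'_\infty=N'_0\oplus_{M_0}M_\infty$ (here $M_\infty=\Gamma(\underline{\mfS},M_{\mfS})$), and the given morphism $f$ is an $M_\infty$-linear map $N'_\infty\to N_\infty$. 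Such a map will typically involve elements of $M_\infty\setminus M_0$ and therefore does \emph{not} descend to an $M_0$-linear map $N'_0\to N_0$; what is true is only that it descends over some fine $M_i$ with $M_0\subseteq M_i\subseteq M_\infty$.

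The paper's proof supplies exactly the missing ingredient: after writing $M_\infty=\varinjlim M_i$ as a filtered colimit of fine sub-charts containing $M_0$, and setting $N_i=N_0\oplus_{M_0}M_i$, $N'_i=N'_0\oplus_{M_0}M_i$, one uses that $N_0$ (being a fine chart for a log smooth morphism) is \emph{finitely presented} over the fine monoid $M_0$ in the sense of \cite[Lemma I.2.1.9]{Ogu}. Hence the composite $N_0\to N_\infty\to N'_\infty=\varinjlim N'_i$ factors through some $N'_i$, and after base changing everything along $M_0\to M_i$ and renaming, the morphism does descend. Your sketch gestures at ``$\Hom$-sheaf computed from $M_0$-level data'' and \Cref{rem:colimit_log_str}, but neither of these yields the factorization through a finite stage --- that step genuinely requires the finite-presentation argument and the freedom to replace $M_0$ by a larger fine sub-chart, which your write-up does not allow. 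The rest of your outline (filteredness of fine sub-charts, stability of log smoothness under base change, compatibility of fiber products) agrees with the paper's reductions and is fine.
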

\begin{proof}
We may and do suppose that $\underline{\mfS}=\Spf A$ is affine and that $M_\mfS$ has a chart $M_\infty\to A$. Also, we may and do suppose that $g\colon \mfX\to\mfS$ [resp.  $g'\colon \mfX'\to \mfS$] has a model $(\underline{\mfX},M_{\mfX0})$ [resp. $(\underline{\mfX}',M_{\mfX'0})$] over the fine log structure $M_0$ [resp. $M_{0'}$] on $\underline{\mfS}$. We may even assume there is a fine chart $M_0\to N_0$ [resp. $M_0\to N'_{0}$] of this map. {By considering the log structure associated to $M_0\oplus M_{0'}$}, we may suppose that $M_0=M_{0'}$. {By replacing $\underline{\mfS}$ be an \'etale cover}, we may assume that the log structure $M_{\mfS}$ {has a chart $M_\infty$} which is the filtered colimit $\varinjlim M_{i}$ as $M_i$ varies among  fine log structures on $\underline{\mfS}$ over $M_0$ with a map to $M_{\mfS}$.

{
The monoid $M_\infty$ is  the filtered colimit $\varinjlim M_{i}$ as $M_i$ varies among  fine monoids over $M_0$ with a map to $M_{\infty}$. We let $N_i$ [resp. $N_i'$] be the integral pushout $N_0\oplus_{M_0}M_i$ [resp. $N'_0\oplus_{M_0}M_i$] and $N_\infty$ be the integral pushout $N_0\oplus_{M_0}M_\infty$ [resp. $N'_0\oplus_{M_0}M_\infty$] .

We are left to show that a $M_\infty$-linear morphism $f\colon N_\infty\to N'_\infty$ has a $M_i$-linear model $N_i\to N'_i$ for some large enough $i$. We consider the following composite morphism over $M_0$
\be
N_0\to N_\infty\to N'_\infty=\varinjlim N'_i.
\ee
Since $N_0$ is finitely presented over the fine monoid $M_0$ in the sense of  \cite[Lemma I.2.1.9]{Ogu} there exists an index $i$ for which the morphism above factors over $M_0$ as  $N_0\stackrel{f_0}{\longrightarrow}N'_i\to N'_\infty$. By eventually taking the base change over $M_i$ of $N_0$ and renaming the indices,  we may and do assume that $i=0$. Since $N_\infty$ [resp. $N'_\infty$] is the push-out of $N_0$ [resp. $N'_0$] along $M_0\to M_\infty$, we also deduce that the morphism $f_\infty$ is the push-out of $f_0$ along $N_0\to N_\infty$, as wanted. 
}
\end{proof}

We will mostly restrict ourselves to vertical morphisms of log schemes:

				\begin{dfn}[{\cite[7.3]{NakLogEt1}}] We say that a morphism of log (formal) schemes $f\colon\mfX\to\mfS$  is \emph{vertical} if the cokernel of $f^{-1}M_{\mfS}\to M_{\mfX}$ is a sheaf of groups. 
				We denote by $\Sm^{\mathrm v}/\mfS$ the full subcategory of $\Sm/\mfS$ consisting \emph{vertical} and log smooth (formal) schemes over $\mf S$.  

\end{dfn}
\begin{rmk}
\begin{enumerate}
\item For a morphism $\varphi\colon P\to Q$ of monoids, the cokernel of $\varphi$ is a   group if and only if, for every $q\in Q$, there exists $q'\in Q$ and $p\in P$ such that $q=q'+\varphi(p)$. It is also equivalent to the ideal $\emptyset\subset Q$ being the only prime ideal $\mf q\subset Q$ such that $\varphi^{-1}(\mf q)=\emptyset$. In particular, it is equivalent to $P/P^*\to Q/Q^*$ satisfying the same condition. 
\item Vertical morphisms are stable under composition: if $G\to P$ is a morphism of monoids with $G$ a group such that the cokernel is also a group, then $P$ is a group. 
\item Vertical morphisms are stable under pullbacks and taking the rigid analytic fiber (as $\underline{\mf S}$ is adic and of finite ideal type): for a morphism of monoids, the condition for the cokernel to be a group is stable under pushout. %
	\end{enumerate}
\end{rmk}

We will often use the following notation.
\begin{notation}\label{log notation}
	Let $K$ be a non-archimedean field with ring of integers $\mc O_K$ and residue field $k$. Let $\mc O_K^\times$ denote the log formal scheme whose underlying scheme is $\Spf\mcO_K$ and whose log structure is the one associated to the pre-log structure $\mcO_K\setminus\{0\}\to \mc O_K$. 
	We will denote by $k^0$ the log scheme whose underlying scheme is $\Spec k$ and whose log structure is given by the pullback along $\Spec k\to\Spf\mcO_K$.  %
\end{notation}

We will implicitly use the following elementary fact about the log structure on perfectoid rings: 
\begin{lemma}\label{exm:logstructures} 
	Let $K$ be a perfectoid field with tilt $K^\flat$. %
	The multiplicative map $\sharp\colon \mc O_{K^\flat}\cong \lim_{a\mapsto a^p}\mc O_K\to \mc O_K$, or equivalently, the composite of the Teichm\"uller lift $\mc O_{K^\flat}\to A_{\inf}=W(\mc O_{K^\flat})$ and  Fontaine's map $\theta\colon A_{\inf}\to \mc O_K$, induces a chart $\mc O_{K^\flat}\setminus\{0\}\to \mc O_K$ of the log scheme $\mc O_{K}^\times$.  
	In particular, the two log structures on $\Spec k$ induced from $\mc O_K$ and $\mc O_{K^\flat}$ are canonically identified. 
\end{lemma}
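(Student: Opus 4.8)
The plan is to verify that Fontaine's multiplicative map $\sharp\colon \mc O_{K^\flat}\to\mc O_K$ does in fact define a chart for the log structure on $\mc O_K^\times$, and then deduce the statement about the two log structures on $\Spec k$ by reduction modulo the maximal ideals. First I would recall that the log structure on $\mc O_K^\times$ is, by \Cref{log notation}, the one associated to the pre-log structure $\mc O_K\setminus\{0\}\to\mc O_K$; since $K$ is a rank-one valued field, $\mc O_K\setminus\{0\}$ is precisely the set of nonzero elements, and the associated log structure is the sheafification of $(\mc O_K\setminus\{0\})\oplus\mc O_K^*\to\mc O_K$ modulo the identification of the common part $\mc O_K^*$. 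To check that $\sharp\colon\mc O_{K^\flat}\setminus\{0\}\to\mc O_K\setminus\{0\}\subset\mc O_K$ is a chart, I need: (i) the map lands in $\mc O_K\setminus\{0\}$, which holds because $\sharp$ is multiplicative and injective on $\mc O_{K^\flat}$ and $K$ is a domain; (ii) the induced map of associated log structures $(\mc O_{K^\flat}\setminus\{0\})^a\to(\mc O_K\setminus\{0\})^a$ is an isomorphism. For (ii) the standard criterion (\cite[Section 2]{Kato89} or \cite[II.2.3]{Ogu}) is that it suffices to check that the map is \emph{exact}, i.e.\ that $(\mc O_{K^\flat}\setminus\{0\})\oplus\mc O_K^*\to\mc O_K\setminus\{0\}$, $(a,u)\mapsto a^\sharp u$, identifies the target with the pushout, equivalently that every nonzero $x\in\mc O_K$ can be written as $a^\sharp u$ with $a\in\mc O_{K^\flat}\setminus\{0\}$ and $u\in\mc O_K^*$, and that such a presentation is unique up to the evident $\mc O_{K^\flat}^*$-ambiguity.

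The existence part of (ii) is where the perfectoid hypothesis enters and is the main point. The key input is the description of the value group: tilting induces an isomorphism of value groups $v(K^\times)\xrightarrow{\sim} v(K^{\flat\times})$ compatible with $\sharp$, and $\mc O_{K^\flat}\setminus\{0\}$ surjects onto $v(K^{\flat\times})_{\geq 0}=v(\mc O_{K^\flat}\setminus\{0\})$. Hence for $x\in\mc O_K$ nonzero there is $a\in\mc O_{K^\flat}\setminus\{0\}$ with $v(a^\sharp)=v(x)$, so $x/a^\sharp\in\mc O_K^*$; this gives the required presentation $x=a^\sharp\cdot(x/a^\sharp)$. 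Uniqueness is immediate: if $a_1^\sharp u_1=a_2^\sharp u_2$ then $a_1^\sharp/a_2^\sharp\in\mc O_K^*$, so $v(a_1)=v(a_2)$ in $v(K^{\flat\times})$, whence $a_1/a_2\in\mc O_{K^\flat}^*$ (as $\mc O_{K^\flat}$ is a valuation ring), and then $u_1,u_2$ are matched accordingly. This verifies that $\sharp$ is a chart. The equivalence with the composite $\mc O_{K^\flat}\to A_{\inf}\xrightarrow{\theta}\mc O_K$ is the well-known identity $[\,a\,]\mapsto a^\sharp$ under $\theta$ (the multiplicative section of the tilting map), so no separate argument is needed there beyond citing it.

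Finally, for the statement about $\Spec k$: the log structure $k^0$ (\Cref{log notation}) is by definition the pullback of $\mc O_K^\times$ along $\Spec k\hookrightarrow\Spf\mc O_K$, and by the same token there is a log scheme "$k^{0,\flat}$" obtained by pulling back $\mc O_{K^\flat}^\times$ along $\Spec k\hookrightarrow\Spf\mc O_{K^\flat}$ — note the residue fields of $\mc O_K$ and $\mc O_{K^\flat}$ are canonically the same field $k$. Pulling back the chart just constructed, $k^0$ acquires the chart $(\mc O_{K^\flat}\setminus\{0\})\to k$ given by reduction of $\sharp$, and this factors through $(\mc O_{K^\flat}\setminus\{0\})\twoheadrightarrow(\mc O_{K^\flat}\setminus\{0\})/\mc O_{K^\flat}^* = v(K^{\flat\times})_{\geq 0}$ — but so does the chart defining $k^{0,\flat}$, since on $\Spec k$ every element of $\mc O_{K^\flat}^*$ maps to a unit. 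Both therefore present $k^0$ and $k^{0,\flat}$ with the \emph{same} chart monoid $\Gamma:=v(K^{\flat\times})_{\geq 0}=v(K^\times)_{\geq 0}$ mapping to $0\in k$ (every positive element maps to $0$ in the residue field), giving the canonical identification. I expect the only subtle point to be bookkeeping the exactness/chart criterion correctly in the \emph{non-fine} (merely quasi-coherent, integral) setting of \cite{Kos20}, but the criterion and the valuation-theoretic surjectivity above are exactly what is needed, so no genuine obstacle arises.
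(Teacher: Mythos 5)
Your verification of the chart condition follows the same route as the paper: the paper's one-line proof cites the bijectivity of $\sharp\colon(\mc O_{K^\flat}\setminus\{0\})/\mc O_{K^\flat}^*\to(\mc O_K\setminus\{0\})/\mc O_K^*$, and your argument unpacks exactly this fact (surjectivity, via the value-group isomorphism, gives the existence of a presentation $x=a^\sharp u$; injectivity gives its uniqueness up to $\mc O_{K^\flat}^*$). That part is fine and matches the paper.

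Your final paragraph contains a gap, however. You assert that the chart map $\mc O_{K^\flat}\setminus\{0\}\to k$, $a\mapsto\overline{a^\sharp}$, ``factors through'' $\Gamma=(\mc O_{K^\flat}\setminus\{0\})/\mc O_{K^\flat}^*$, and conclude that $\Gamma\to k$ (sending every $\gamma\neq1$ to $0$) is a common chart for $k^0$ and $k^{0,\flat}$. But $a\mapsto\overline{a^\sharp}$ sends $\mc O_{K^\flat}^*$ onto $k^*$, not to $\{1\}$, so the map does not factor through $\Gamma$; showing that $\Gamma\to k$ is nonetheless a chart of $k^0$ would amount to producing a multiplicative splitting of the extension $1\to k^*\to(\mc O_K\setminus\{0\})/(1+\mfm_K)\to\Gamma\to1$, which you neither supply nor actually need. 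The intended argument is simpler and avoids this: the chart $\mc O_{K^\flat}\setminus\{0\}\to\mc O_K$ that you just constructed pulls back along $\Spec k\hookrightarrow\Spf\mc O_K$ to a chart $\mc O_{K^\flat}\setminus\{0\}\to k$ of $k^0$, and this composite coincides with the direct reduction map $\mc O_{K^\flat}\setminus\{0\}\to k$ that defines $k^{0,\flat}$, because $\sharp$ induces the identity on residue fields under the canonical identification $\mc O_K/\mfm_K\cong\mc O_{K^\flat}/\mfm_{K^\flat}$ coming from tilting. So $k^0$ and $k^{0,\flat}$ literally share a single chart from $\mc O_{K^\flat}\setminus\{0\}$, which gives the canonical identification; no passage to the quotient monoid $\Gamma$ is required.
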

\begin{proof}
This follows from the bijectivity of $\sharp\colon (\mc O_{K^\flat}\setminus\{0\})/\mc O_{K^\flat}^*\to (\mc O_{K}\setminus\{0\})/\mc O_{K}^*$. 
\end{proof}
\begin{dfn}\label{sst}
\begin{enumerate}
\item Let $K$ be  a discrete valuation  field and let $\mf X$ be a formal scheme over $\Spf \mc O_K$. 
We say that $\mfX$ is \emph{semistable} if %
if {Zariski}-locally it admits an \'etale $\mc O_K$-morphism to \be
	\Spf(\mcO_K\langle u_1, \ldots, u_n\rangle/(u_1\cdots u_m-\varpi))\ee for some integers $0 \leq m \leq n$, where $\varpi$ is a uniformizer of $K$. The  category  of semistable formal schemes over $\mc O_K$ will be denoted by $\FSch^{\sst}/\mcO_K$.   
	\item Let $C$ be the completion of an algebraic closure of a discrete valuation  field $K$. We say that a formal scheme $\mfX$ over $\Spf\mcO_C$ is  \emph{$K$-semistable}  if  it is basic semistable in the sense of \cite[2.2.1(b)]{CN19} i.e. obtained as the pullback along $\Spf\mcO_C\to\Spf\mcO_{L}$ from a semistable formal scheme over $\mcO_L$ for some finite field extension $L/K$. The  category  of $K$-semistable formal schemes  will be denoted by $\FSch^{\sst}/\mcO_C$. 
\item  We say that a log formal scheme $\mf X=(\underline{\mf X},M_{\mf X})$ over $\mc O_K^\times$ [resp.\ $\mc O_C^\times$] is \emph{semistable} [resp.\ $\mc O_C^\times$] if $\mf X$ is log smooth and vertical over $\mc O_K^\times$ and if $\underline{\mf X}$ is a semistable [resp.\ $K$-semistable] formal scheme over $\mc O_K$ [resp.\ $\mc O_C$]. We denote by $\Sm^{\sst}/\mcO^\times_K$ [resp.\ $\log\FSm^{\sst}/\mcO_C^\times$] the  full subcategory of $\Sm/\mcO^\times_K$ [resp.\ $\Sm/\mcO_C^\times$] consisting of semistable log formal schemes over $\mc O_K^\times$ [resp.\ $\mc O_C^\times$]. 	
\end{enumerate}
\end{dfn}
One could also introduce pluri-nodal or poly-stable versions of the definitions above (cf.\ \Cref{sec:motives+}).
	\begin{rmk}\label{rmk:logF=F}
	 Assume that $K$ is [the completed algebraic closure of] a discrete valuation  field. 
	 The category $\FSch^{\sst}/\mcO_K$ is canonically equivalent to $\Sm^{\sst}/\mcO_K^\times$. %
	 Indeed, if $\mfX \to \mcO_K^\times$ is vertical and log smooth, $\mfX$ is (log) regular by \cite[IV.3.5.3]{Ogu}  and the log structure on $\underline{\mfX}$ is the log structure associated to the pre-log structure $(\mcO_{\underline{\mfX}}[1/p]^\times)\cap \mcO_{\underline{\mfX}}\to \mc O_{\underline{\mf X}}$ by \cite[Proposition 2.6]{Niz06} (or \cite[Prop. 12.5.54]{GR}, cf.\ \cite[Theorem 11.6]{Kat94b}, \cite[Example A.1]{Kos20}). Thus, the equivalence of categories follows (see also  {\cite[III.1.6.2]{Ogu}}).  Note that the same equivalence holds for the pluri-nodal or the poly-stable versions 
	 since both (like the property of being semistable) are properties of the underlying schemes.
	\end{rmk}

We note that the log structure on the residue field $k_C$ is the one induced by the pre-log structure $\Q_{\geq0}\cong |\varpi|^{\Q_{\geq0}}\to C$ sending $a\in \Q_{>0}$ to $0$. On the residue field, it then makes sense to give the following definitions.
	
	\begin{dfn}\label{ss over log pt}
	Let $k$ be a field. For an integral monoid $\Gamma$, whose monoid structure we will write multiplicatively, we let $M_{\Gamma}^0$  denote the log structure on $\Spec k$  associated to $\Gamma\to k$ sending $\gamma\ne1$ to $0$. 
	For an element $\gamma\in\Gamma$, let $\iota_\gamma$ denote the morphism of log schemes   $(\Spec k, M_{\Gamma}^0)\to(\Spec k,M_{\N}^0)$ obtained by the identity on $\Spec k$ and the map $\N\to \Gamma;1\mapsto \gamma$ of monoids. 
\begin{enumerate}
\item		 Let $X=(\underline X,M_X)$ be a log smooth log scheme over $(\Spec k,M_{\N}^0)$. 
		 We say that $X$ is \emph{semistable} if {Zariski}-locally on $\underline X$, the log scheme $X$ admits a (strictly) \'etale $(\Spec k,M_{\N}^0)$-morphism to $k[u_1,\ldots,u_n]/(u_1u_2\cdots u_m)$ equipped with the standard log structure, for some $0\leq m\leq n$ (cf.\ \cite[III.1.8.4]{Ogu}).  
		 We denote by $\Sm^{\sst}/(\Spec k,M_{\N}^0)$ the full subcategory of $\Sm/(\Spec k, M_{\N}^0)$ consisting of semistable log schemes over $(\Spec k,M_{\N}^0)$. 
\item		 Let $X=(\underline X,M_X)$ be a log smooth log scheme over $(\Spec k, M_{\Gamma}^0)$. 
We say that $X$ is $\gamma$-\emph{semistable} if it is isomorphic to the fiber product $X_0\times_{(\Spec k,M_{\N}^0),\iota_\delta}(\Spec k, M_{\Gamma}^0)$ for some  $\delta$ such that $\gamma\in \delta^{\N}$  and some semistable log scheme $X_0$ over $(\Spec k,M_{\N}^0)$. %
We denote by $\Sm^{\sst}/(\Spec k, M_{\Gamma}^0)$ the full subcategory of $\Sm/(\Spec k, M_{\Gamma}^0)$ consisting of $\gamma$-semistable log schemes over $(\Spec k, M_{\Gamma}^0)$. 
    	\end{enumerate}
	\end{dfn}

\begin{rmk}
In the above definition, the only relevant case for us is when $k$ is the residue field of the completion $C$ of an algebraic closure of a discrete valued field $K$. In this case, we will always chose $\gamma=|\varpi|$ where $\varpi$ is a uniformizer of $K$.  From the definitions above, we then obtain that the special fiber of a $K$-semistable log formal scheme over $\mcO_C$ is $|\varpi_K|$-semistable. 
\end{rmk}

		\subsection{Logarithmic motives} 

{For an adic topological ring $A$ with an ideal of definition $I$, we let $A\langle u\rangle$ denote the $I$-adic completion of the polynomial ring $A[u]$, and equip it with the $I$-adic topology. }For a formal scheme $\underline{\mf S}$, we let $\A^1_{\underline{\mf S}}$ [resp. $\G_{m,\underline{\mfS}}$] denote the formal scheme given by $\Spf \mcO_{\underline{\mfS}}\langle u\rangle$ [resp. $\Spf \mcO_{\underline{\mfS}}\langle u^{\pm1}\rangle$]. {
It is (adic and) smooth over ${\underline{\mf S}}$, and %
its associated rigid-analytic space is typically denoted by $\B^1_{\underline{\mf S}^{\rig}}$ [resp. $\T^1_{\underline{\mf S}^{\rig}}$]. 
}
For a log (formal) scheme ${\mf S}$, we let $\A^1_{\mf S}$ [resp. $\G_{m,\mfS}$] denote the log formal scheme $(\A^1_{\underline{\mf S}},p^*M_{\mf S})$ [resp. $(\G_{m,\underline{\mfS}}, p^*M_{\mfS})$] where $p$ is the natural projection to $\mfS$. The following is a straighforward generalization of the classical (infinity-categorical) definition of motives, see e.g. \cite[Definitions 2.1.15 and 3.1.3]{AGV}
		
		\begin{dfn}\label{dfn:mot}Let 
		$\mfS$ 
		be a log (formal) scheme. %
		\begin{enumerate}
		\item
			We let %
			$\logFDA^{\eff}_{}(\mfS)$ be the full monoidal infinity-subcategory  consisting of %
			$\A^1_{\mfS}$-invariant objects in the monoidal stable infinity-category of \'etale hypersheaves  %
			$\Sh^{\wedge}_{\et}(\Sm/\mfS,\Ch\Q)$, where $\et$ denotes the strict-\'etale topology. We let $L_{\A^1}$ be the localization functor (left adjoint to the natural inclusion) $\Sh^{\wedge}_{\et}(\Sm/\mfS,\Ch\Q)\to \logFDA^{\eff}(\mfS)$. 
			\item We denote by $\Q(1)$ the image by $L_{\A^1}$ of the cofiber of the split inclusion of representable sheaves induced by the morphism $1\colon \mfS\to\G_{m,\mfS}$. %
We let  %
$\logFDA^{}(\mfS)$ be its stabilization  with respect to the Tate twist $\otimes\Q(1)$ (i.e., the formal inversion of $\Q(1)$ in the sense in \cite[Definition 2.6]{Rob15}). 
\end{enumerate}
For a log scheme $S=(\underline{S},M_S)$, we write $\logDA^{(\eff)}(S)$ instead of $\logFDA^{(\eff)}(S)$. 	There is a Yoneda functor from $\log\Sm/\mfS$ to $\logFDA(\mfS)$, which we denote by $\mfX\mapsto \Q_{\mfS}(\mfX)$. %
We use the same notation for $\logDA(S)$. For $n\in\Z$, the $n$-th power Tate twist will be denoted by $M\mapsto M(n)$.

We define the category $\logFDA^{\rm v}(\mfS)$ of vertical log formal motives similarly, i.e., using $\Sm^{\mr v}/\mf S$ instead of $\Sm/\mf S$. 
If $K$ is [the completed algerbraic closure of] a discrete valuation  field, we  can also define  the categories %
 $\logFDA^{\sst}(\mcO_K^\times)$ of  semistable log formal motives, with the obvious notation, and if $k$ is its residue field, we can analogously introduce the category  %
  $\logDA^{\mathrm{ss}}(k^0)$ of semistable log motives. %

		\end{dfn}

\begin{rmk}
There is an obvious universal property of motives with respect to functors having strict-\'etale descent, $\A^1$-invariance and inverting the Tate twist: see \cite[Remark 2.8]{LBV}
\end{rmk}
\begin{rmk}Sometimes it would be more natural to consider the \emph{Kummer-\'etale} or the \emph{log \'etale}  %
topology on log schemes rather than the strict-\'etale topology. We remark that in that case one would get a further localization of the category $\logDA$ introduced above. 
Any log smooth vertical morphism over $\mcO_K$  is log \'etale locally polystable if the group of values of the valuation ring $\mcO_K$ is
divisible  (e.g.  when $K$ is algebraically closed) by \cite[Theorem 5.2.16]{ALPTlog}. This is also  true for the   coarser topology $\divet$   (see   \cite[Definition 3.1.5]{logmot}). Sometimes it would also be natural to consider the $(\mathbb{P}^1,\infty)$-localization rather than the $\A^1$-localization to define log motives  (see \cite{logmot}). We do not pursue this approach here. %

\end{rmk}
\begin{rmk}\label{descent}\label{adm}
Arguing as in \cite[Theorem 2.3.4]{AGV}, it is immediate to see that the functor $\mfS\mapsto\logFDA^{(\mr v)}(\mfS)$, $f\mapsto f^*$ where $f^*$ is induced by the pullback, has \'etale hyperdescent.
\end{rmk}

\begin{rmk}\label{hyper}
In \Cref{dfn:mot}, the category of \'etale hypersheaves consists of those presheaves having descent with respect to  \'etale hypercovers (see \cite[Definition 2.3.1 and Remark 2.3.2]{AGV}). If the base $\mfS$ has a finite (topological) Krull dimension (which will be the case in all the relevant situations) then this category coincides with that of \emph{sheaves} $\Sh_{\et}^{}(\Sm/\mfS,\Ch\Q)$ i.e., those having descent with respect to \v{C}ech \'etale hypercovers (see \cite[Lemma 2.4.18]{AGV}). 
\end{rmk}
\begin{rmk}\label{rmk:cptg}
In case  $\mfS$ has a finite (topological) Krull dimension, the category  $\logFDA_{}(\mfS)$ is compactly generated. A class of compact generators is given by motives attached to affine log (formal) schemes that are log smooth over the base. A qcqs morphism between log formal schemes of finite Krull dimension induces a functor $f^*$ which is in $\Prlo$ (see \cite[Proposition 2.4.22]{AGV}).%
\end{rmk}

	\begin{rmk}
		\label{Prloo}
		Arguing as in \cite[Remark 2.1.13]{AGV}, the categories $\logFDA(\mfS)$ and the functors $f^*$ introduced above are in $\CAlg(\Prl)$ i.e. they are  equipped with a symmetric monoidal  structure such that  $\Q_{\mfS}(\mfX)\otimes\Q_{\mfS}(\mfX)\cong \Q_{\mfS}(\mfX'')$ where the fiber product $\mfX''=\mfX\times_{\mfS}\mfX'$ is taken in the category of integral quasi-coherent log (formal) schemes.
	\end{rmk}
\begin{rmk}
We warn the reader that we will denote by $\logFDA^{(\eff)}(\mfS)$ the categories which, in the context of \cite{AGV}, would be denoted as $\logFDA_{\et}^{(\eff)\wedge}(\mfS,\Q)^{\otimes}$. 
\end{rmk}
	\begin{rmk}\label{rmk:psin}
Note that any log scheme strict-\'etale over a vertical [resp.\ semistable] log scheme is vertical [resp.\ semistable]. In particular, the categories 	$\logFDA^{\rm v}(\mfS)$ [resp. $\logFDA^{\sst}_{}(\mcO^\times_K)$ and  $\logDA^{\sst}_{}(k^0)$] are full sub-categories of $\logFDA^{}(\mfS)$  [resp. of $\logFDA^{}_{}(\mcO^\times_K) $ and  $\logDA^{}_{}(k^0)$, respectively]    (see for example \cite[Proposition 3.17]{vezz-fw}). The same is true for the poly-stable or the pluri-nodal versions.
	\end{rmk}

\subsection{Invariance of log motives under the special fiber}
Assume $\mfS$ is a  log formal scheme. We let $\underline{\mfS}_\sigma$ be the special fiber of $\underline{\mfS}$ (i.e. the reduced scheme associated to $\underline{\mfS}/\mfI$ where $\mfI$ is an ideal of definition) and $\mfS_\sigma=(\underline{\mfS}_\sigma,M_{\mfS_\sigma})$ the  log scheme induced by the closed immersion $\underline{\mfS}_\sigma\subset\underline{\mfS}$. 
		The inclusion $\iota\colon {\mfS_\sigma} \to {\mfS}$ induces an adjunction $(\iota^*,\iota_*)$ (see also \cite[Notation 3.1.9]{AGV})
		\be
		\begin{tikzcd}
			\logFDA^{(\eff)}_{}(\mfS)\arrow[rr,shift left=.5ex,"\iota^* "]\arrow[rr,"\iota_*"' ,leftarrow, shift right=.5ex]&& \logDA^{(\eff)}_{}({\mfS_{\sigma}}).
		\end{tikzcd}
		\ee
		The following is a log-variant of \cite[Corollary  1.4.29]{ayoub-rig} (see also \cite[Theorem 3.1.10]{AGV}). %

		\begin{thm}\label{FDA=DA}
			Let $\mfS$ is a quasi-coherent integral log formal scheme. Assume that at least one of the following holds:
			\begin{enumerate}
\item The log structure $M_{\mfS}$ is fine;
\item 			$\underline{\mfS}$ has 	finite topological Krull dimension.
			\end{enumerate}%
		 Then the adjunction $(\iota^*, \iota_*)$ gives an equivalence in $\CAlg(\Prl)$: \be\logFDA^{(\eff)}(\mfS)\cong\logDA^{(\eff)}({\mfS_\sigma}).\ee
		\end{thm}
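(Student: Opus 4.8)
The plan is to transport Ayoub's proof that motives of formal schemes are invariant under passage to the special fiber (\cite[Corollaires 1.4.24 and 1.4.29]{ayoub-rig}, refined in \cite[Theorem 3.1.10]{AGV}) to the logarithmic setting, using the chart calculus of \Cref{smmaps} and \Cref{rem:colimit_log_str2} to carry the log structures along. Here $\iota\colon\mfS_\sigma\to\mfS$ is the closed immersion, and $\iota^*$ is the symmetric monoidal left adjoint sending the motive of a log smooth $\mfX/\mfS$ to that of its special fiber $\mfX_\sigma:=\mfX\times_\mfS\mfS_\sigma$, and $\Q(1)$ to $\Q(1)$ (\Cref{Prloo}). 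Since $\iota^*$ preserves colimits and the Tate twist, it is enough to show that $\iota^*$ is an equivalence of underlying $\infty$-categories on the \emph{effective} categories; the equivalence in $\CAlg(\Prl)$ and the non-effective statement then follow formally, with quasi-inverse $\iota_*$.

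\emph{Reductions.} Both $\mfS\mapsto\logFDA^{\eff}(\mfS)$ and $S\mapsto\logDA^{\eff}(S)$ satisfy strict-\'etale hyperdescent, and $(\iota^*,\iota_*)$ is natural in $\mfS$ (\Cref{descent}); moreover the small strict-\'etale site of $\mfS$ agrees with that of $\mfS_\sigma$, because an \'etale morphism ignores the nilpotent thickening $\underline{\mfS}_\sigma\hookrightarrow\underline{\mfS}$ and the log structure of $\mfS_\sigma$ is pulled back. Hence we reduce to $\underline{\mfS}=\Spf A$ affine, with $M_\mfS$ associated to the canonical chart $P:=\Gamma(\underline{\mfS},M_\mfS)\to A$ (\Cref{rem:colimit_log_str2}). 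If $M_\mfS$ is not fine, so that hypothesis (2) holds, we write $P=\varinjlim_i P_i$ as the filtered union of its fine submonoids, let $M_i$ be the fine log structure attached to $P_i\to A$, so $M_\mfS=\varinjlim_i M_i$; by \Cref{def:form} and the argument of \Cref{smmaps} every log smooth $\mfX\to\mfS$, and every morphism of such, descends strict-\'etale locally to level $i$ for $i\gg0$, so a continuity argument in the spirit of \cite[\S 2.8]{AGV} (here the finiteness of $\dim\underline{\mfS}$ enters) gives $\logFDA^{\eff}(\mfS)\simeq\varinjlim_i\logFDA^{\eff}(\underline{\mfS},M_i)$ compatibly with $\iota^*$ and with $\logDA^{\eff}(\mfS_\sigma)\simeq\varinjlim_i\logDA^{\eff}((\underline{\mfS},M_i)_\sigma)$. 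As each $(\underline{\mfS},M_i)$ is fine, we are reduced to hypothesis (1), now with no constraint on $\dim\underline{\mfS}$.

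\emph{The fine affine case.} Now $\underline{\mfS}=\Spf A$, $M_\mfS$ fine, and we work with strict-\'etale hypersheaves throughout. For essential surjectivity of $\iota^*$: the motives of affine log smooth objects generate both categories under colimits, and since $M_\mfS$ is fine any affine log smooth $X\to\mfS_\sigma$ admits, strict-\'etale locally, a presentation by a fine chart $P\to Q$ together with log-smooth coordinates (\cite[IV.3.3.1]{Ogu}); the ``same'' presentation over $\Spf A$ produces an affine log smooth $\mfX\to\mfS$ with $\mfX_\sigma\cong X$, so every generator of $\logDA^{\eff}(\mfS_\sigma)$ lies strict-\'etale locally, hence globally by hyperdescent, in the essential image of $\iota^*$, which is therefore essentially surjective. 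For full faithfulness: for affine log smooth $\mfX/\mfS$ and $F\in\logFDA^{\eff}(\mfS)$ one has $\Map(\Q_\mfS(\mfX),F)\simeq F(\mfX)$, and likewise over $\mfS_\sigma$, so the comparison reduces to checking $(L_{\A^1}\Q_\mfS(\mfY))(\mfX)\simeq(\iota^*L_{\A^1}\Q_\mfS(\mfY))(\mfX_\sigma)$ for affine log smooth $\mfX,\mfY/\mfS$. As in \cite[Corollaire 1.4.24]{ayoub-rig}, this holds because the strict-\'etale site of $\mfX$ coincides with that of $\mfX_\sigma$ and any two log smooth lifts over $\mfS$ of a fixed affine log smooth $\mfS_\sigma$-scheme become identified after strict-\'etale sheafification and $\A^1$-localization: log smoothness produces the (non-canonical) isomorphism between two such lifts strict-\'etale locally, \Cref{rem:colimit_log_str2} handles the log structure, and $\A^1$-invariance absorbs the non-canonicity.

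\emph{Main obstacle.} The crux is the fine affine case, and within it the full faithfulness, i.e., the statement that the nilpotent thickening $\mfS_\sigma\hookrightarrow\mfS$ is invisible to log motives. The two delicate points are that log smooth morphisms --- and morphisms between them --- lift along the thickening strict-\'etale locally (this is precisely why \Cref{smmaps} and \Cref{rem:colimit_log_str2} are established in advance), and that the ambiguity of these lifts is absorbed by $\A^1$-invariance; the finiteness hypotheses are what legitimise these steps and the continuity reduction from case (2) to case (1), the finite-dimensional case guaranteeing moreover that hypersheaves coincide with sheaves and that the relevant categories are compactly generated (\Cref{hyper}, \Cref{rmk:cptg}).
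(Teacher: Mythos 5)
Your proposal follows the same broad outline as the paper's proof (reduce to the effective setting, work strict-\'etale locally with the canonical chart, prove the fine case first, and then pass to the general case by a continuity argument that uses hypothesis (2)). However, in the fine affine case --- which is where the entire technical content of the theorem lives --- your argument skips the steps that actually make the proof work, and as written there is a genuine gap.

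The paper's proof of the fine case is a Morel--Voevodsky/Ayoub localization argument: it reduces to showing that, for a log smooth $\mfX\to\mfS$ and a section $s\colon\mfS_\sigma\to\mfX$ over $\mfS$, a certain presheaf $T_{(\mfX,s)}$ on $\Sm/\mfS$ is $(\et,\A^1)$-equivalent to $\Q$. The difficulty, which has no analogue in the strict setting, is that the section $s$ is in general \emph{not} strict ($s^*M_{\mfX}$ can be strictly larger than $\iota^*M_{\mfS}$), so one cannot directly invoke the classical coordinate-chart argument. The paper deals with this in three steps that your proposal omits entirely: (i) Kato's chart lemma \cite[Proposition 4.10]{Kato89} is used to factor $s$, strict-\'etale locally on $\mfS$, as a strict closed immersion $s'\colon\mfS_\sigma\to\mfX'$ followed by a log \'etale morphism $e\colon\mfX'\to\mfX$; (ii) one restricts $\mfX'$ to the strict locus of $\mfX'\to\mfS$ (Olsson); and (iii) one must show $T_{(\mfX',s')}\to T_{(\mfX,s)}$ is an isomorphism of presheaves, which amounts to lifting sections along the log \'etale morphism $e$ across the universal homeomorphism $\mfP_\sigma\hookrightarrow\mfP$; this uses Vidal's theorem on the invariance of the log \'etale topos under universal homeomorphisms \cite{vidal}. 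Only after all this is one reduced to the strict case, where Ayoub's argument \cite[\'Etape 3 of Proposition 1.4.21]{ayoub-rig} applies. Your claim that ``log smoothness produces the (non-canonical) isomorphism between two such lifts strict-\'etale locally, \ldots\ and $\A^1$-invariance absorbs the non-canonicity'' is essentially a restatement of the goal, not a proof; without the reduction to the strict case the classical $\A^1$-homotopy does not exist, and formal log smoothness alone does not give you what you need because the objects you need to deform are sections that are not strict.

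Two smaller points. First, the paper treats case (2) by checking invertibility on stalks of the hypersheaf $\mfU\mapsto\logFDA(\mfU)$, using \Cref{smmaps} to identify the stalk \'etale topos with that of a filtered colimit of fine charts; this is not quite the same as your global ``$\logFDA^{\eff}(\mfS)\simeq\varinjlim_i\logFDA^{\eff}(\underline{\mfS},M_i)$'', and the latter requires a log version of the continuity theorem \cite[Theorem 2.8.15]{AGV} which is not established in the paper --- the stalk-wise formulation is precisely what avoids having to prove it. Second, your separation of essential surjectivity from full faithfulness is unobjectionable but not what the paper does; the paper proves that the unit $\mcF\to\iota_*\iota^*\mcF$ is an equivalence on generators, together with the fact that $\iota^*$ preserves generators and $\iota_*$ preserves colimits, which yields the equivalence directly.
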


We will follow closely the proof of  \cite[Corollary 1.4.29]{ayoub-rig} which is in turn based on the classical proof of the localization axiom for motives by Morel-Voevodksy \cite[Theorem 2.21]{MV99}.  We will reproduce  it, borrowing some notation from \cite{Hoy21}, adapting it to our specific setting.  %
	\begin{proof}		As in the classical case, it suffices to prove the (stronger) effective version of the statement (i.e. before Tate stabilization) and, since $\iota^*$ is monoidal, we can prove the claim in $\Prl$. We first prove the statement assuming $M_{\mfS}$ is fine.

	 By the topological invariance of the \'etale site \cite[Th\'eor\`eme 18.1.2]{EGAIV4} and the characterization of log smooth maps \cite[Theorem 3.5]{Kato89}, it is immediate to see that  the functor $\iota^*$ sends a class of  generators to a class of  generators, and that $\iota_*$ commutes with all colimits (see \cite[Lemmas  2.2.4 and 2.2.5]{AGV}).

	In particular, 	in order to show that $\mcF\cong\iota_*\iota^*\mcF$, as the functors $\iota^*$ and $\iota_*$ commute with colimits, we may assume that $\mcF=\Q_{\mfS}(\mfX)$ for some $\mfX$ in $\Sm/\mfS$.  	We remark that the functor $\iota_*$ between the categories of (complexes of $\Q$-linear) presheaves preserves $(\et,\A^1)$-local equivalences. We shall then consider these objects as presheaves, and prove that this morphism is a $(\et,\A^1)$-local equivalence (i.e. it becomes invertible in the motivic category).
		
		To this aim, it suffices to check that for any $\mfS'$ 
		smooth over the base $\mf S$, and any morphism $\mcF'\colonequals\Q_{\mfS}(\mfS')\to \iota_*\iota^*\mcF$ that corresponds to a morphism $s\colon \mfS'_\sigma\to \mfX$, the pullback $\mcF\times_{\iota_*\iota^*\mcF}\mcF'\to \mcF'$ is an equivalence. This morphism (see \cite[Corollaire 4.5.40]{ayoub-th2} or \cite[Corollary 5]{Hoy21}) is the image via a functor $f_\sharp $  (which preserves $(\A^1,\et)$-equivalences) of the morphism $T_{(\mfX',s)}\to\Q$ between presheaves on $\Sm/\mfS'$, where we put $\mfX'\colonequals \mfS'\times_{\mfS}\mfX$  and where $T_{(\mfX',s)}$  is the presheaf sending $\mfP$ to the free $\Q$-module on the set of morphisms $\mfP\to \mfX'$ over $\mfS'$ that factor over the morphism $s'\colon \mfS'_\sigma\to\mfX'$ induced by $s$ on the special fiber.  In particular, it suffices to show that $T_{(\mfX',s)}\to\Q$ is an equivalence. Up to renaming the objects, we then fix a section $s\colon \mfS_\sigma\to\mfX$ of $\mfX\to \mfS$ and we are left to prove that the presheaf $T_{(\mfX,s)}$ on $\Sm/\mfS$ is $(\et,\A^1)$-equivalent to $\Q$. 
		
		By \cite[Proposition 4.10]{Kato89}, for some strict \'etale cover $(\mfU_i)$ of $\mfS$, the composite $(\mfU_i)_\sigma\to \mf S_\sigma\overset{s}{\to}\mf X$ factors into a strict morphism ${s'_i}\colon (\mfU_{i})_\sigma\to\mfX'_i $  followed by a log \'etale morphism ${e_i}\colon  \mfX'_i\to\mfX$. In particular, the section $s_i\colon (\mfU_i)_\sigma\to%
		\mfX\times_{\mfS}\mfU_i$ of $\mfX\times_{\mf S}\mf U_i\to\mfU_i$ factors as $
		(\mf U_i)_\sigma\to (\mf U_i\times_{\mf S}\mf U_i)_\sigma\to \mf X'_i\times_{\mf S}\mf U_i\to \mfX\times_{\mfS}\mfU_i,
		$
		where the first morphism is the diagonal. 
		Since the statement that we want to prove is \'etale local on $\mfS$ (see \cite[\'Etape 1 of the proof 2.4.21]{ayoub-rig}), we may and do assume that  the section $s$   decomposes into a strict closed immersion ${s}'\colon \mfS_\sigma\to \mfX' $, followed by a log \'etale morphism ${e}\colon  \mfX'\to \mfX$. Since $s'$ targets the strict locus of the morphism $\mfX'\to \mfS$, which is an open of $\mfX$ (see, for example,  \cite[Proposition 3.19]{olsson}), by eventually shrinking $\mfX'$ to this open subscheme,  we may and do  assume that $\mfX'\to\mfS$ is strict. %

		We now claim that  the natural morphism $T_{(\mfX',s')}\to T_{(\mfX,s)}$ is an isomorphism.  We can argue as follows: let $f\colon \mfP\to\mfS$ be in $\Sm/\mfS$ and let $h\colon\mfP \to\mfX$ be a section of $T_{(\mfX,s)}$ on $\mfP$, i.e. a morphism over $\mfS$ which factors over $s$ on the special fiber.  Note that the following diagram commutes.%
		\be 
		\begin{tikzcd}
		\mfP_\sigma \arrow[d] \arrow[r] & \mfX'_\sigma\arrow[d]\\
		\mfS_\sigma \arrow[ru, "s'"] & \mfX_\sigma\arrow[l]
		\end{tikzcd}
		\ee We have to show that $h$ determines uniquely  a section of $T_{(\mfX',s')}$ on $\mfP$. Indeed, let  $\mfP'$ be the  fiber product 
		$ \mfP' = \mfP \times_{\mfX}\mfX' $
		in the category of   log formal schemes: it is log \'etale over $\mfP$, and the diagram above gives a section $\tau_\sigma$ on the special fiber of the morphism  $g\colon \mfP' \to\mfP$, which composed with $\mfP_\sigma' \to\mfX'_\sigma$ agrees with $s'\circ f_\sigma$. It is then enough to show that $\tau_\sigma$ can be lifted uniquely to a section $\tau$ of the log \'etale morphism $g$. This is true as soon as the log \'etale toposes on $ \mfP$ and $\mfP_\sigma$ are equivalent. Since the morphism $\mfP_\sigma \to\mfP$ is a strict universal homeomorphism, we can use the main result of \cite{vidal} (together with   \cite[Corollaries I.2.2.5 and IV.3.1.11]{ogus-ct} to get rid of the saturated assumption) to conclude. %

		By what we have just proved, we may and do replace $\mfX$ with $\mfX'$ and hence assume that the morphism $\mfX\to\mfS$ is strict, and hence, %
		 induced by a smooth morphism between the underlying schemes. In this case, we may conclude the statement of the theorem as in \cite[\'Etape 3 of Proposition 1.4.21]{ayoub-rig} or  \cite[Proposition  4.5.42]{ayoub-th2}. This concludes the proof under the assumption that $M_{\mfS}$ is fine.
		
		We now move to the non-fine case. Fix a quasi-coherent integral log formal scheme $\mf S=(\underline{\mfS},M_{\mfS})$ with finite Krull dimension, and note that the functor $\logFDA((-))\to\logDA((-)_\sigma)$ is a morphism between  {hypersheaves} on $\mfS_{\et}$ with values in $\Prlo$  (see Remarks \ref{adm} and \ref{hyper}). In order to prove it is an equivalence, by \cite[Proposition 2.8.1]{AGV} it suffices to check it is invertible on stalks. To this aim, we fix a geometric point $\bar{s}\to\underline{\mfS}$ and we remark that (see the notation and the result of \cite[Proposition 2.5.8]{AGV})\be\varinjlim_{\mfU}\logFDA(\mfU)  \cong \logFDA( (\mfU)_{\mfU}),\ee
		where $\mfU$ runs among \'etale neighborhood of $\bar{s}$ in $\mfS$. By definition of smooth morphisms and \Cref{smmaps}, the \'etale topos on $\varinjlim \Sm/\mfU$ is equivalent to the \'etale topos on $\varinjlim \Sm/(\underline{\mfU},M_{\mfU}^f)$ where we now let $\mfU$ vary among \'etale neighborhoods of $\bar{s}$ and  $M_{\mfU}^f$ vary among fine log structures on $\mfU$ with a morphism to $M_{\mfU}$. We then deduce that
		\be\varinjlim_{\mfU}\logFDA(\mfU)  \cong \logFDA( ((\underline{\mfU},M^f_{\mfU}))_{(\underline{\mfU},M_{\mfU}^f)})\cong \varinjlim_{(\underline{\mfU},M_{\mfU}^f)} \logFDA((\underline{\mfU},M_{\mfU}^f)).\ee
		As the analogous equivalences hold for $\logDA((-)_\sigma)$, we can deduce the claim from the fine case.
	\end{proof}

 By \Cref{rmk:psin} we can restrict the previous equivalence to vertical/semistable motives.
\begin{cor}\label{iotaiseq}Let $\mfS$ be a quasi-coherent integral log formal scheme with a fine log structure, or with an underlying formal scheme of finite  topological Krull dimension.  
The equivalence of \Cref{FDA=DA} restricts to an equivalence 
\be
	 \logFDA^{\mathrm{v}}(\mfS)\cong\logDA^{\rm v}({\mfS_\sigma}).
	\ee
Analogously, if $K$ is a complete non-archimedean field, we obtain an an equivalence 
\be\logFDA^{\sst}(\mcO_K^\times)\cong\logDA^{\sst}(k^0).\ee
The same is true if one restricts to poly-stable or pluri-nodal motives on both sides.\qed
\end{cor}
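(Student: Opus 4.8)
The plan is to observe that the two claimed equivalences are nothing but the restriction of the equivalence $\iota^*\colon\logFDA^{(\eff)}(\mfS)\to\logDA^{(\eff)}(\mfS_\sigma)$ of \Cref{FDA=DA} to the full subcategories spanned by suitable classes of compact generators, and to check that $\iota^*$ and its inverse $\iota_*$ match these classes to one another.

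First I would recall, via \Cref{rmk:cptg} and \Cref{rmk:psin}, that $\logFDA^{\mathrm v}(\mfS)$ is the smallest full subcategory of $\logFDA(\mfS)$ closed under colimits and Tate twists that contains the motives $\Q_{\mfS}(\mfX)$ for $\mfX$ affine, vertical and log smooth over $\mfS$, and symmetrically that $\logDA^{\mathrm v}(\mfS_\sigma)$ is generated by the $\Q_{\mfS_\sigma}(\mfX_0)$ with $\mfX_0$ affine vertical log smooth over $\mfS_\sigma$; likewise in the semistable case, with $\Sm^{\sst}/\mcO_K^\times$ and $\Sm^{\sst}/k^0$, and in the poly-stable and pluri-nodal variants.

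Next I would check that $\iota^*$ and $\iota_*$ carry these generating families onto one another. By construction $\iota^*\Q_{\mfS}(\mfX)=\Q_{\mfS_\sigma}(\mfX_\sigma)$, where $\mfX_\sigma$ carries the log structure pulled back from $\mfX$; and the proof of \Cref{FDA=DA} produces, for each log smooth $\mfX_0$ over $\mfS_\sigma$, an essentially unique log smooth lift $\mfX$ over $\mfS$ with $\iota_*\Q_{\mfS_\sigma}(\mfX_0)\cong\Q_{\mfS}(\mfX)$, using the topological invariance of the \'etale site and Kato's criterion for log smoothness. It then remains to verify that, under this dictionary, $\mfX$ is vertical (resp.\ semistable, poly-stable, pluri-nodal) over the base exactly when $\mfX_0=\mfX_\sigma$ is so over the special fiber. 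For verticality this is immediate: the condition that the cokernel of $f^{-1}M_{\mfS}\to M_{\mfX}$ be a sheaf of groups concerns only monoid sheaves on the small \'etale site, and is therefore insensitive to passing to the special fiber. For semistability I would invoke \Cref{rmk:logF=F} to pass to the underlying (formal) schemes over $\mcO_K$: a formal scheme log smooth over $\mcO_K^\times$ whose special fiber is semistable over $k^0$ must, \'etale-locally, coincide with the standard model $\Spf(\mcO_K\langle u_1,\dots,u_n\rangle/(u_1\cdots u_m-\varpi))$, since the relevant \'etale chart lifts uniquely from the special fiber; and conversely the special fiber of this model is $|\varpi|$-semistable over $k^0$ in the sense of \Cref{ss over log pt}. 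The poly-stable and pluri-nodal cases are handled identically, being conditions on the underlying schemes only.

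Having matched generators, I would conclude formally: an equivalence of presentable $\infty$-categories that identifies a set of compact generators of one colimit-closed full subcategory with one of another restricts to an equivalence of those subcategories, and this equivalence is symmetric monoidal in the vertical case since $\Sm^{\mathrm v}/\mfS$ is stable under fibre product. I expect the only genuine work to lie in the semistable step above --- that ``log smooth over $\mcO_K^\times$ with semistable special fiber'' already forces the underlying formal scheme to be semistable, and dually that every semistable log scheme over $k^0$ lifts to a semistable log formal scheme over $\mcO_K^\times$ --- and both follow from the same unique-lifting-of-\'etale-charts mechanism used in \Cref{FDA=DA} together with the explicit shape of the standard semistable model.
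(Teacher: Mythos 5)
Your proof is correct and matches the paper's (implicit) argument, which declares the corollary immediate from \Cref{rmk:psin} together with \Cref{FDA=DA}: one restricts $(\iota^*,\iota_*)$ to the full subcategories generated by vertical/semistable objects by matching the generating classes, exactly as you do. One small caveat: for the semistable case you do not actually need the stronger assertion that a log smooth $\mfX/\mcO_K^\times$ with semistable special fiber is itself semistable — it suffices (and is all your chart-lifting argument really supplies) that every semistable log scheme over $k^0$ arises, \'etale-locally, as the special fiber of \emph{some} semistable log formal scheme, since then $\iota_*\Q(X_0)\cong\Q(\mfX')$ lands in $\logFDA^{\sst}(\mcO_K^\times)$ without needing to identify $\mfX'$ with any given lift.
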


\begin{rmk}
We note that, based on the proof above, one can get a general ``localization triangle" (see \cite[Proposition 2.2.3(2)]{AGV}) for (formal) \emph{log \'etale}  log motives: in this case, one can work log \'etale locally and hence replace $\mfX$ with $\mfX'$ (in the notation of the proof above) without invoking any special property of the closed immersion chosen. 
\end{rmk} 

\begin{rmk}
In the proof of \Cref{FDA=DA},	we do not use $\Q$-coefficients in a crucial way. Part (1) holds for the categories of motives with coefficients in any commutative ring spectrum $\Lambda$. The proof of part (2) needs some admissibility conditions on the base to ensure that the categories involved are in $\Prlo$. We expect this condition to be superfluous. %
\end{rmk}

\begin{rmk}\label{interpret}
Let $K$ be a local field. We remark that \Cref{iotaiseq} gives in particular  natural equivalences
\be\begin{tikzcd}
\logFDA^{\sst}(\mcO^\times_K)\arrow[r, "\sim"]& \logDA^{\sst}(k^{\log})& \arrow[l, "\sim"']\logFDA^{\sst}{(}(W(k)^0),
\end{tikzcd}
\ee
where $W(k)^0$ is the log formal scheme structure on $\Spf W(k)$ induced by $\N\mapsto 0$. This is the motivic interpretation of the definition of  Hyodo--Kato cohomology \cite{HK94,GK05} which can be canonically defined on the right-most category, and hence on the left-most category as well (see \cite[Theorem 0.1]{GK05}). 
\end{rmk}

\begin{cor}\label{cor:cont}Let $(\mfS_i)_i$ be a cofiltered inverse system of quasi-compact and quasi-separated log formal schemes with affine transition morphisms, and let $\mfS$ be its limit.  Assume that each log formal scheme in $\{\mfS_i,\mfS\}$ has a fine log strucure or a finite topological Krull dimension. Then the canonical functor defines an equivalence 
\be
\varinjlim	\logFDA(\mfS_i)\cong\logFDA(\mfS).
\ee
\end{cor}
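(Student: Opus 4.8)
The plan is to deduce the continuity statement for $\logFDA$ from two inputs: (1) the already-established equivalence $\logFDA(\mfS)\cong\logDA(\mfS_\sigma)$ of \Cref{FDA=DA}, which reduces everything to a statement about $\logDA$ of log schemes rather than log formal schemes; and (2) the continuity of $\logDA$ for cofiltered limits of log schemes, which in turn should follow from the non-logarithmic continuity result for $\DA$ (cf.\ \cite[Proposition 2.5.11 or its analogue]{AGV}, \cite[Corollaire 1.A.3]{ayoub-rig}) combined with the \'etale-local chart description used already in the proof of \Cref{FDA=DA}.

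\begin{proof}
By \Cref{FDA=DA}, applied to each $\mfS_i$ and to $\mfS$ (the hypothesis on fine log structures or finite topological Krull dimension is exactly what that theorem requires), the special fiber functors give a commutative diagram of equivalences, so it suffices to prove that the canonical functor
\be
\varinjlim_i \logDA((\mfS_i)_\sigma)\longrightarrow \logDA(\mfS_\sigma)
\ee
is an equivalence, where $\mfS_\sigma=\varprojlim_i (\mfS_i)_\sigma$ is a cofiltered limit of log schemes with affine transition maps (the underlying schemes form such a system, and the log structures are compatible by construction of the special fiber log scheme). Both sides lie in $\CAlg(\Prl)$ and the transition functors are the pullbacks, which preserve compact generators by \Cref{rmk:cptg}, so by \cite[Lemma 2.5.4]{AGV} (or the general formalism of continuity in \cite[\S 2.5]{AGV}) it is enough to check: (i) the functor is essentially surjective on a class of compact generators, and (ii) it is fully faithful on such generators, i.e.\ induces an equivalence on mapping spectra between motives of log smooth affine objects.

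For essential surjectivity, a compact generator of $\logDA(\mfS_\sigma)$ is $\Q_{\mfS_\sigma}(\mfX)$ for $\mfX$ log smooth affine over $\mfS_\sigma$; by \Cref{rem:colimit_log_str2} and \Cref{smmaps}, such an $\mfX$, together with its structure morphism, is \'etale-locally the base change of a finitely presented log smooth object over some $(\mfS_i)_\sigma$ equipped with a fine sub-chart, and standard spreading-out for finitely presented objects over a cofiltered limit of schemes (\cite[Th\'eor\`eme 8.8.2, 8.10.5]{EGAIV3}) lets us descend $\mfX$ to some index $i$; the log smoothness and verticality also descend since these are finitely presented conditions. For full faithfulness, after reducing as above to generators coming from a fixed index, the mapping spectrum in $\logDA(\mfS_\sigma)$ between two such motives is computed \'etale-locally, and using the chart description the relevant hypersheaf cohomology over $\Sm/\mfS_\sigma$ is the filtered colimit of the corresponding cohomologies over $\Sm/(\mfS_i)_\sigma$; this is exactly the content of continuity of $\et$-hypersheaf cohomology under cofiltered limits of coherent sites with affine transition maps, together with commutation of $L_{\A^1}$ and Tate stabilization with filtered colimits.

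\textbf{Main obstacle.} The delicate point is handling the non-fine log structures: the continuity result for $\DA$ in \cite{AGV} is stated for schemes, not log schemes, and the passage to log schemes goes through the \'etale-local reduction to fine sub-charts (as in the second half of the proof of \Cref{FDA=DA}). One must verify that this reduction is compatible with the cofiltered limit, i.e.\ that the limit of the systems of fine sub-charts $M^f_{\mfU}$ over the various $\mfU$ still exhausts the limit log structure $M_{\mfS_\sigma}$, so that the two colimit operations (over indices $i$ and over fine sub-charts) can be interchanged. Once this bookkeeping is in place, the fine case follows from classical $\DA$-continuity, and the non-fine case follows by the same stalk-wise argument already used in \Cref{FDA=DA}, now combined with the fine-case continuity just established.
\end{proof}
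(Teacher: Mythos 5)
Your proof takes essentially the same route as the paper's: reduce via \Cref{FDA=DA} to the special-fiber statement, which the paper then asserts follows from the equivalence of \'etale topoi on $\Sm/\mfS_\sigma$ and on $\varinjlim\Sm/(\mfS_i)_\sigma$. You have, usefully, unpacked what that topos equivalence actually requires---spreading out of log smooth affine objects using \Cref{smmaps} and the fine-chart reduction, together with the interchange of the two filtered colimits (over indices $i$ and over fine sub-charts)---which the paper leaves implicit in a one-line appeal to the \'etale-topos equivalence.
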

\begin{proof}
We may prove the analogous statement for the special fibers, where it follows from the equivalence of the \'etale toposes on $\Sm/\mfS_\sigma	$ and on $\varinjlim \Sm/\mfS_{i,\sigma}$.
\end{proof}
\begin{exm}\label{ex:cont}
If $C$ is the completion of an algebraic extension of a  local field $K$ then $\logFDA(\mcO_C^\times)\cong\varinjlim\logFDA(\mcO_L^\times)$ as $L$ varies among finite extensions of $K$ inside $C$. 
\end{exm}

\subsection{The log Monsky--Washnitzer functor and its compatibility with tilting}
In this subsection, we show that the ``log Monsky--Washnitzer functors'' are compatible with the motivic tilting equivalence from \cite{vezz-fw}, which is a log generalization of the main result of \cite{vezz-tilt4rig}. 

First of all, we introduce the categories of rigid analytic motives. 
\begin{dfn}\label{RigDA}
Let $S$ be a rigid analytic space (in the sense of \cite[Definition II.2.2.18]{fujiwara-kato}). 
We denote by $\RigDA^{(\eff)}(S)$ the category $\RigDA^{(\eff)\wedge}_{\et}(S,\Q)$ of (effective) hypercomplete rigid analytic motives over $S$ with respect to the \'etale topology (\cite[Definition 2.1.15 and Remark 2.1.18]{AGV}). It has a natural symmetric monoidal structure. 
For a non-archimedean field $K$, we write $\RigDA^{(\eff)}(K)$ for the category $\RigDA^{(\eff)}((\Spf \mc O_K)^\rig)$. 
We denote by $\RigDA(K)^{\ct}$ the full subcategory of $\RigDA(K)$ of compact objects (or, equivalently, of fully dualisable objects by \cite[Theorem 2.31]{ayoub-rig} and \cite{riou-dual}). %
\end{dfn}

\begin{rmk}
In \cite{AGV} the category $\RigDA^{(\eff)}(S)$ endowed with its monoidal structure is typically denoted by $\RigDA_{\et}^{(\eff)\wedge}(S,\Q)^{\otimes}$. In the following, we will only consider rigid analytic spaces with a finite Krull dimension, for which the non-hypercomplete version of the definition $\RigDA^{}(S,\Q)$ coincides with the one given above \cite[Lemma 2.4.18]{AGV}.%
\end{rmk}

Let $\mf S=(\underline{\mf S},M_{\mf S})$ be a log formal scheme. 
Since $\underline{\mfS}$ is adic and of finite ideal type,  we have the rigid space $\underline{\mf S}^\rig$ associated to $\underline{\mf S}$ (see \cite{fujiwara-kato}). 
We assume, for simplicity, that the log structure $M_{\mf S}$ is trivial on the rigid generic fiber, in which case we write $\mf S^\rig$ for $\underline{\mf S}^\rig$. 
Then the functor $\Sm^{\rm v}/\mf S\to \Sm/\mf S^\rig\colon \mfX\mapsto \mf X^\rig=\underline{\mfX}^{\rig}$ %
induces an adjunction (see also \cite[Notation 3.1.12]{AGV})
\be
\begin{tikzcd}
	\logFDA^{\mathrm{v} \,(\eff)}_{}(\mfS)\arrow[rr,shift left=.5ex,"\xi_{\mf S} "]\arrow[rr,"\chi_{\mf S}"' ,leftarrow, shift right=.5ex]&& \RigDA^{(\eff)}_{}({{\mfS}^\rig}).
\end{tikzcd}
\ee
We call the following composite the log Monsky--Washnitzer functor
\be
\logDA^{\mr v\,(\eff)}(\mf S_\sigma)\cong \logFDA^{\mr v\,(\eff)}(\mf S)\to \RigDA^{(\eff)}(\mf S^\rig). 
\ee

Let $K$ be perfectoid field with tilt $K^\flat $.  
Recall that the main theorem of \cite{vezz-fw} gives a \emph{motivic tilting equivalence} $\RigDA(K)\cong \RigDA(K^\flat)$ (see \cite[Theorem 3.12]{BV}). 

According to \Cref{exm:logstructures}, the log schemes $\mcO_K^\times$ and $\mcO_{K^\flat}^\times$ (see \Cref{log notation}) induce the same log structure on the residue field. 
Then the compatibility of the motivic tilting equivalence and the Monsky--Washnitzer functor can be stated as follows, which generalizes \cite[Theorem 3.2]{vezz-tilt4rig} (see also \cite[Proposition 5.11]{LBV}). 
\begin{prop}
	\label{prop:comm0}
	Let $K$ be perfectoid field with tilt $K^\flat $.  
	The following diagram commutes up to a canonical invertible natural transformation.
	\be
	\xymatrix{
		\logFDA^{\mathrm{v}}(\mcO^\times_K)\ar[r]&\RigDA_{}(K)\ar@{-}[dd]^{\sim}\\
		\logDA^{\mathrm{v}}(k^0)\ar@{-}[u]_{\sim}\ar@{-}[d]^{\sim}\\
		\logFDA^{\mathrm{v}}(\mcO^\times_{K^\flat})\ar[r]&\RigDA_{}(K^\flat)\\
	}
	\ee
\end{prop}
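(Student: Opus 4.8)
The statement asks for commutativity of a square relating the two log Monsky--Washnitzer functors $\logFDA^{\mathrm v}(\mcO_K^\times)\to\RigDA(K)$ and $\logFDA^{\mathrm v}(\mcO_{K^\flat}^\times)\to\RigDA(K^\flat)$ under the identification of the common source $\logDA^{\mathrm v}(k^0)$ (via \Cref{iotaiseq}) and the motivic tilting equivalence $\RigDA(K)\simeq\RigDA(K^\flat)$ on the targets. The natural strategy is to mimic the proof of \cite[Theorem 3.2]{vezz-tilt4rig}: reduce the comparison of the two composite functors to a statement about their values on a set of compact generators, and there to an explicit computation with perfectoid covers. I would first recall that, by \Cref{rmk:cptg}, $\logFDA^{\mathrm v}(\mcO_K^\times)$ is compactly generated by motives $\Q_{\mcO_K^\times}(\mfX)$ of affine log smooth vertical formal schemes $\mfX$ over $\mcO_K^\times$, and that all four functors in sight are colimit-preserving (they are in $\Prl$ by \Cref{Prloo} and the discussion around $\xi_{\mf S}$), so it suffices to produce a natural isomorphism of the two resulting functors $\logFDA^{\mathrm v}(\mcO_K^\times)\to\RigDA(K^\flat)$ on such generators, compatibly with morphisms.

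First I would fix such an $\mfX$ and trace it around the square. Going down-then-right: $\mfX$ corresponds under \Cref{iotaiseq} to its special fiber $\mfX_\sigma\in\logDA^{\mathrm v}(k^0)$, which in turn corresponds to a (unique up to the equivalence) vertical log smooth formal scheme $\mfX^\flat$ over $\mcO_{K^\flat}^\times$ with the same special fiber (here one uses \Cref{exm:logstructures}, which identifies the log structure on $\Spec k$ induced from $\mcO_K$ and from $\mcO_{K^\flat}$); then one takes its rigid generic fiber $(\mfX^\flat)^\rig$ over $K^\flat$. Going right-then-down: one takes $\mfX^\rig$ over $K$ and applies the motivic tilting equivalence. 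So the content is the construction of a canonical isomorphism in $\RigDA(K^\flat)$ between the tilt of $\mathrm M(\mfX^\rig)$ and $\mathrm M((\mfX^\flat)^\rig)$, natural in $\mfX$. The key geometric input is that the tilting equivalence is computed, after pulling back along the perfectoid pro-étale cover $K_\infty/K$ (where $K_\infty$ is the completed perfection / $\varpi^{1/p^\infty}$-tower as in \cite{vezz-fw}), by an honest approximation/untilting of adic spaces: the perfectoid space $(\mfX^\rig)^{\wedge}_{K_\infty}$ tilts to $((\mfX^\flat)^\rig)^{\wedge}_{K_\infty^\flat}$ because both $\mfX$ and $\mfX^\flat$ are obtained, locally, by base change from a common fine log smooth model over a fine sub-chart (using \Cref{smmaps} and \Cref{rem:colimit_log_str2}), and the two generic fibers become literally isomorphic after adjoining all $p$-power roots of the toric/log coordinates. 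This is exactly the perfectoid local model computation underlying \cite{vezz-tilt4rig}; the log decorations only change the coordinate monoid from $\N^r$ to a more general fine vertical monoid, which is harmless because verticality guarantees the log structure dies on the generic fiber and the chart can be chosen toric.

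The remaining work is bookkeeping to upgrade this local/perfectoid-level isomorphism to a natural transformation of functors on all of $\logFDA^{\mathrm v}(\mcO_K^\times)$. Concretely I would: (i) use \Cref{cor:cont} and \Cref{ex:cont} to reduce, if convenient, to the case where $K$ (or $K_\infty$) is perfectoid/algebraically closed, replacing $K$ by $K_\infty$ where the tilting equivalence is transparent and descending afterward along the $\Gal$-action / continuity; (ii) invoke the universal property of motives (\Cref{rmk:cptg} and the universal property remark after \Cref{dfn:mot}, cf.\ \cite[Remark 2.8]{LBV}) to see that both composite functors, being strict-étale-descended, $\A^1$-invariant and Tate-stable, are determined by their restriction to log smooth generators, so a compatible family of isomorphisms on generators extends uniquely; (iii) check the compatibility with the symmetric monoidal structures (\Cref{Prloo}), which follows because both the MW functor and the tilting equivalence are monoidal and fiber products of log schemes correspond to tensor products of motives. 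The main obstacle, as in \emph{loc.\ cit.}, is step (i)–(iii) coherence rather than any single computation: one must make the natural isomorphism genuinely functorial and compatible with the two \emph{a priori} different ways of building the equivalence on the generic fiber, i.e.\ show the diagram of $\infty$-categories commutes up to a \emph{coherent} invertible $2$-cell, not merely pointwise. I expect to handle this exactly as \cite{vezz-tilt4rig, vezz-fw} do, by exhibiting both composites as the unique colimit-preserving monoidal extension of the same functor defined on perfectoid (or toric log smooth) generators, so that uniqueness of such extensions forces the natural transformation and its invertibility.
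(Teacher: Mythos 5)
Your proposal takes a genuinely different route from the paper. You propose to mimic the direct perfectoid-cover argument of \cite[Theorem 3.2]{vezz-tilt4rig}: reduce to compact generators, explicitly untilt each formal model $\mfX/\mcO_K^\times$ to a vertical log smooth $\mfX^\flat/\mcO_{K^\flat}^\times$ with the same log special fiber, and compare the rigid generic fibers via the perfectoid pro-\'etale tower. The paper instead uses the Fargues–Fontaine description of the motivic tilting equivalence from \cite[Corollary 5.14]{LBV}: it writes the tilt as the composite $\RigDA(K^\flat)\to\RigDA(K^\flat)^{h\varphi}_\omega\cong\RigDA(U)^{hj^*}_\omega\xrightarrow{x^{\sharp *}}\RigDA(K)$ for an affinoid $U\subset\Spa W(\mcO_{K^\flat})^\an$, extends the left column of the square to lax Frobenius fixed points (\Cref{rmk:laxfixedpoints}), and exhibits both composites as specializations of a single $\varphi$-equivariant diagram passing through $\logFDA^{\mathrm v}((A_{\inf},M_{K^\flat}))$ and $\logFDA^{\mathrm v}((B^+_{[0,\varepsilon]},M_{K^\flat}))$, where \Cref{iotaiseq} kills the two directions of specialization. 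The payoff of the paper's route is that coherence is built in: one only ever compares honest functors on log formal schemes over $A_{\inf}$ and takes fixed points, never a family of pointwise isomorphisms.

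This is also where your proposal has a genuine gap, one you partly flag but do not close. First, the assignment $\mfX\rightsquigarrow\mfX^\flat$ "with the same special fiber" is not a $1$-functor on $\Sm^{\mathrm v}/\mcO_K^\times$: \Cref{iotaiseq} produces $\mfX^\flat$ only as an object of the $\infty$-category $\logFDA^{\mathrm v}(\mcO_{K^\flat}^\times)$, determined up to contractible choice, and there is no canonical scheme-theoretic untilting from $\mcO_K$ to $\mcO_{K^\flat}$ (the paper sidesteps this precisely by working over $A_{\inf}$, which maps compatibly to both). Second, even granting compatible local models, the collection of perfectoid-level isomorphisms $\Q((\mfX^\flat)^\rig)\cong\Q(\mfX^\rig)^\sharp$ must be organized into a coherent natural equivalence of colimit-preserving $\infty$-functors; agreement on objects (or even on a generating family) is not enough. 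Your step (iii) defers this to "exactly as \cite{vezz-tilt4rig,vezz-fw} do," but in \emph{loc.\ cit.}\ the relevant natural transformation is produced by an explicit zigzag of adjunctions on presheaf categories before localization — an argument that would have to be rebuilt in the log-vertical setting and is not automatic. Until that coherence step is spelled out (or, as in the paper, replaced by a diagram of honest functors), the proof is incomplete.
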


For the proof, we use some notation of \cite{LBV}. 

\begin{rmk}\label{rmk:laxfixedpoints}Let $K$ be a perfectoid field. The base change along the Frobenius defines an endofunctor $\RigDA(K^\flat) \xrightarrow{\varphi^*} \RigDA(K^\flat)$, equipped with a natural invertible transformation $\id \Rightarrow \varphi^*$. In particular, for every motive $M$ there is a natural equivalence $M\xrightarrow{\sim} \varphi^* M$, which can be used to define a natural functor $\RigDA(K^\flat)\to \RigDA(K^\flat)_\omega^{h\varphi}$ to the Frobenius fixed points (see \cite[Section 2.3]{LBV} and the notations therein). This functor is compatible with the rigid-analytic generic fiber functor $\xi$ and with the restriction functor $\iota^*$ in the sense that the diagram
	\be\begin{tikzcd}
	\logDA^{\rm v}(k^0) \arrow[d] & \arrow[l, swap, "\iota^*"] \logFDA^{\rm v}(\mcO^\times_{K^\flat}) \arrow[d] \arrow[r, "\xi"] & \RigDA(K^\flat) \arrow[d] \\
	\logDA^{\rm v}(k^0)^{{\mathrm{lax}}-h\varphi}_\omega &\arrow[l] \logFDA^{\rm v}(\mcO_K^\times)^{{\mathrm{lax}}-h\varphi}_\omega \arrow[r]& \RigDA(K^\flat)^{h\varphi}_\omega
	\end{tikzcd}
	\ee
	is commutative. Here the left vertical functors are the canonical one{s} to the lax homotopy fixed points of the Frobenius computed in $\Prlo$. Informally the objects of $\mcC^{\mathrm{lax}-hF}$ for a compactly generated presentable infinity-category $\mcC$ equipped with an  endofunctor $F\colon \mcC\to \mcC$ with a right adjoint are pairs $(X, \alpha)$, where $X$ is an object of $\mcC$ and $\alpha$ is a map $X\to FX$. Note that the homotopy fixed points are a localization of the lax fixed points using e.g. \cite[Proposition 5.5.3.17]{lurie}. 
	See \cite[Definition II.1.4]{NS} for more details. The subscript $\omega$ stands for the full subcategory generated under filtered colimits by compact objects.
\end{rmk}

\begin{proof}[Proof of \Cref{prop:comm0}]%
	We recall that the equivalence $\RigDA(K)\cong\RigDA(K^\flat)$ can be obtained by passing over the Fargues-Fontaine curve as in \cite[Corollary 5.14]{LBV}. That is,  for a fixed affinoid open neighborhood $U=\Spa (B_{[0,\varepsilon]},B_{[0,\varepsilon]}^+)$ of $x^\flat=\Spa K^\flat$ in $\Spa W(\mcO_{K^\flat})^{\an}$ containing $x^\sharp=\Spa K$  the tilting equivalence is given as the composition of the first line in the following diagram, where we use the notation of \cite[Section 5.1]{LBV}:
	\be
	\xymatrix{
		\RigDA(K^\flat)\ar[r]&	\RigDA(K^\flat)^{h\varphi}_\omega\cong\RigDA(U)^{hj^*}_\omega\ar[r]^-{x^{\sharp*}}&\RigDA(K)\\
		\logFDA^{\mathrm{v}}(\mcO^\times_{K^\flat})\ar[r]\ar[u] & \logFDA^{\mathrm{v}}(\mcO^\times_{K^\flat})^{{\mathrm{lax}}-h\varphi}_\omega\ar[u]\\
		\logDA^{\mathrm{v}}(k^0)\ar[r]\ar@{-}[u]^-{\sim} & \logDA^{\mathrm{v}}(k^0)^{{\mathrm{lax}}-h\varphi}_\omega\ar@{-}[u]^-{\sim}
	}
	\ee
	in which the squares commute (see \Cref{rmk:laxfixedpoints}).  We now show that the composite $\logDA^{\mathrm{v}}(k^0)\to\RigDA(K)$ obtained in the diagram above is induced by the natural one. To this aim, it suffices to consider the following commutative $\varphi$-equivariant diagram 
	\be
	\xymatrix{
		&\logFDA^{\mathrm{v}}(\mcO_K^\times)\ar[rr]\ar@{-}[dl]_{\sim}&&\RigDA(K)  %
		\\
		\logDA^{\mathrm{v}}(k^0)&\logFDA^{\mathrm{v}}((A_{\inf},M_{K^\flat}))\ar[r]\ar@{-}[l]_-{\sim}\ar[u]\ar[d]&\logFDA^{\mathrm{v}}((B_{[0,\varepsilon]}^+,M_{K^\flat}))\ar[r]&\RigDA(U)\ar[u]\ar[d]\\
		&\logFDA^{\mathrm{v}}(\mcO^\times_{K^\flat})\ar[rr]\ar@{-}[ul]_{\sim}&&\RigDA(K^\flat)
	}
	\ee
	in which the functors on the left are invertible by means of \Cref{iotaiseq}.
\end{proof}

	\begin{cor}
	\label{prop:comm} %
	Let $C$ be the completion of an algebraic closure of a discrete valuation field $K$ with uniformizer $\varpi$ and residue field $k$. Let $C^\flat$ be its tilt, which is the  completion of an algebraic closure of the discrete valuation field  $k(\!(\varpi^\flat)\!)$. %
	The following diagram commutes, up to a canonical invertible natural transformation.
	\be
	\xymatrix{
		\logFDA^{\sst}(\mcO^\times_{C})\ar[r]&\RigDA_{}(C)\ar@{-}[dd]^{\sim}\\
		\logDA^{\sst}(k^{0})\ar@{-}[u]_{\sim}\ar@{-}[d]^{\sim}\\
		\logFDA^{\sst}(\mcO^\times_{C^\flat})\ar[r]&\RigDA_{}(C^\flat)\\
	}
	\ee
\end{cor}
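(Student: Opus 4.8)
The plan is to obtain \Cref{prop:comm} as the restriction of \Cref{prop:comm0} to the semistable full subcategories, once it is checked that the perfectoid field $C$ and its tilt fit the hypotheses of that proposition. First I would record the relevant (standard) facts about the base: as $C$ is the completion of an algebraically closed non-archimedean extension of $K$, it is a perfectoid field, and standard tilting theory identifies $C^\flat$ with the completion of an algebraic closure of $k\lcc\varpi^\flat\rcc$, for a compatible system $\varpi^\flat=(\varpi,\varpi^{1/p},\ldots)$ of $p$-power roots of $\varpi$, where $k$ is the (algebraically closed) residue field of $C$. By \Cref{exm:logstructures} the log formal schemes $\mcO_C^\times$ and $\mcO_{C^\flat}^\times$ induce one and the same log structure $k^0$ on $\Spec k$. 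Finally $\Spf\mcO_C$ and $\Spf\mcO_{C^\flat}$ are points, hence of finite topological Krull dimension, so \Cref{FDA=DA} and \Cref{iotaiseq} apply to $\mcO_C^\times$ and $\mcO_{C^\flat}^\times$.

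Next I would invoke \Cref{prop:comm0} with the role of ``$K$'' played by $C$. This yields the square, commutative up to a canonical invertible natural transformation, with vertices $\logFDA^{\mathrm{v}}(\mcO_C^\times)$, $\RigDA(C)$, $\logFDA^{\mathrm{v}}(\mcO_{C^\flat}^\times)$, $\RigDA(C^\flat)$, whose horizontal arrows are the log Monsky--Washnitzer functors, whose right-hand vertical equivalence is the motivic tilting equivalence, and whose left-hand portion is given by the equivalences $\logFDA^{\mathrm{v}}(\mcO_C^\times)\cong\logDA^{\mathrm{v}}(k^0)\cong\logFDA^{\mathrm{v}}(\mcO_{C^\flat}^\times)$ of \Cref{iotaiseq}.

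Then I would pass to semistable objects. By \Cref{rmk:psin} each of $\logFDA^{\sst}(\mcO_C^\times)$, $\logFDA^{\sst}(\mcO_{C^\flat}^\times)$ and $\logDA^{\sst}(k^0)$ is a full subcategory of the corresponding vertical category above. Since being semistable is, as noted in \Cref{rmk:logF=F}, a condition on the underlying (formal) scheme---together with log smoothness and verticality---it is preserved by passing to the special fiber, so \Cref{iotaiseq} shows that the two left-hand vertical equivalences restrict to $\logFDA^{\sst}(\mcO_C^\times)\cong\logDA^{\sst}(k^0)$ and $\logFDA^{\sst}(\mcO_{C^\flat}^\times)\cong\logDA^{\sst}(k^0)$. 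The Monsky--Washnitzer functors restrict to $\logFDA^{\sst}(\mcO_C^\times)\to\RigDA(C)$ and $\logFDA^{\sst}(\mcO_{C^\flat}^\times)\to\RigDA(C^\flat)$ upon precomposition with these inclusions, while the tilting equivalence $\RigDA(C)\cong\RigDA(C^\flat)$ is left untouched. Restricting the commutativity datum of \Cref{prop:comm0} along the inclusions of the semistable subcategories therefore produces exactly the square of \Cref{prop:comm}.

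Since every ingredient is already established, I do not expect a real obstacle here; the only points deserving care are the (standard) identification of $C^\flat$ and the observation that the equivalence of \Cref{FDA=DA} is compatible with the semistable conditions on both sides, which is precisely the content of \Cref{iotaiseq}. I would also note that, in contrast with the finitely generated situation of \Cref{ex:cont}, here \Cref{prop:comm0} is applied to $C$ directly rather than as a filtered colimit over the finite subextensions of $K$, so no extra continuity argument (\Cref{cor:cont}) is needed.
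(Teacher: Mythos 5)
Your proposal is correct and follows exactly the same route as the paper: the paper's (one-line) proof is precisely that the semistable diagram is the restriction of the square of \Cref{prop:comm0} along the fully faithful inclusions of \Cref{rmk:psin}, with \Cref{iotaiseq} guaranteeing that the left-hand equivalences restrict to the semistable subcategories. You have simply spelled out the details the paper leaves implicit.
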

\begin{proof}
	The diagram is connected to the one of \Cref{prop:comm0} via fully faithful functors.
\end{proof}

The relation between rigid motives and semistable log motives is  encoded by the following.
	\begin{prop}\label{prop:sst}%
	Let $C$ be the completion of an algebraic closure of a discrete valuation field $K$ of residual characteristic $p>0$. The generic fiber functor $\xi$ exhibits  $\RigDA^{(\eff)}(C)$ [resp. $\RigDA^{(\eff)}(C^\flat)$] as the rig-\'etale localization of the category $\logFDA^{\sst}(\mcO^\times_{C})$ [resp. $\logFDA^{\sst}(\mcO^\times_{C^\flat})$]. 
\end{prop}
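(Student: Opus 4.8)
The plan is to follow the analogous statement for algebraic motives (the characterization of rigid motives over $C$ as a localization of formal motives over $\mathcal{O}_C$, as in \cite[Corollary 1.4.29]{ayoub-rig} and its variants in \cite{AGV}), adapting it to the semistable logarithmic setting. First I would recall that, by the universal property of $\logFDA^{\sst}(\mcO^\times_C)$ with respect to functors satisfying strict-étale descent, $\A^1$-invariance, and invertibility of the Tate twist, the generic fiber functor $\xi$ factors through the localization of $\logFDA^{\sst}(\mcO^\times_C)$ at the class $\mathcal{W}$ of maps that become invertible after rig-étale localization; what must be shown is that the induced functor $L_{\mathcal{W}}\logFDA^{\sst}(\mcO^\times_C)\to\RigDA^{(\eff)}(C)$ is an equivalence. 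I would do this by exhibiting a quasi-inverse, or equivalently by checking that $\xi$ is essentially surjective and that its right adjoint $\chi$ becomes fully faithful after the localization, i.e. that the unit $M\to\chi\xi M$ is a rig-étale equivalence for every $M$ and that the counit $\xi\chi N\to N$ is an equivalence in $\RigDA^{(\eff)}(C)$.

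For \emph{essential surjectivity}, the key input is that every smooth rigid analytic variety over $C$ admits, rig-étale locally (in fact Zariski-locally after a suitable admissible cover), a semistable formal model: this is where one invokes the semistable reduction theorem in its rigid-analytic/formal incarnation (over the algebraically closed base $C$, after passing to a finite extension $L/K$ inside $C$ and using \Cref{ex:cont} to pass to the limit). Concretely, a compact generator of $\RigDA^{(\eff)}(C)$ is the motive of an affinoid smooth $X$, and after an admissible cover refining it one can realize each piece as the rigid generic fiber $\underline{\mfX}^\rig$ of a semistable $\mfX\in\Sm^{\sst}/\mcO^\times_C$; passing to motives and using strict-étale (hence rig-étale) descent on both sides shows $\Q_C(X)$ lies in the image of $L_{\mathcal{W}}\circ\xi$. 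The same argument applies verbatim over $C^\flat$, using that $C^\flat$ is the completion of an algebraic closure of the discrete valuation field $k\lcc\varpi^\flat\rcc$.

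For \emph{full faithfulness} after localization, I would argue that the right adjoint $\chi_{\mf S}$ of $\xi_{\mf S}$, precomposed with the localization, is fully faithful — equivalently that the localizing subcategory generated by $\mathcal{W}$ is exactly the kernel of $\xi$. Here one uses: (i) that $\xi$ sends a set of compact generators of $\logFDA^{\sst}(\mcO^\times_C)$ (motives of semistable $\mfX$) to a set of compact generators of $\RigDA^{(\eff)}(C)$ (their generic fibers); (ii) a continuity argument (\Cref{cor:cont}, \Cref{ex:cont}) reducing to a fixed finite extension $L/K$, where everything is Noetherian of finite Krull dimension; and (iii) the comparison, on the level of the underlying formal/rigid spaces, with the non-logarithmic case \cite[Corollary 1.4.29]{ayoub-rig}, \cite[Theorem 2.9.7]{AGV} — since a semistable log formal scheme $\mfX\to\mcO_C^\times$ has its log structure determined by the underlying semistable formal scheme (\Cref{rmk:logF=F}), the computation of $\Hom$ groups between generators in $\logFDA^{\sst}$ and in $\RigDA$ reduces to the statement already known for formal schemes, once one checks that the rig-étale–local maps one inverts on the log side match those inverted on the non-log side.

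The main obstacle I expect is step (iii): reconciling the \emph{log} strict-étale topology and $\A^1$-localization used to define $\logFDA^{\sst}(\mcO^\times_C)$ with the ordinary étale topology on the rigid generic fiber, so as to identify precisely which class $\mathcal{W}$ of morphisms must be inverted and to verify that inverting it is equivalent to the non-logarithmic localization statement. Concretely, one must control the difference between strict-étale covers of a semistable $\mfX$ and admissible (rig-étale) covers of $\underline{\mfX}^\rig$ — covers of the generic fiber can fail to extend to the log formal model, and conversely blow-ups (which are rig-étale equivalences on the generic fiber but not strict-étale) must be shown to already be inverted. I would handle this by the same strategy as \Cref{FDA=DA}/\Cref{iotaiseq}: pass to the special fiber via the equivalence $\logFDA^{\sst}(\mcO_C^\times)\cong\logDA^{\sst}(k^0)$, where the topology comparison becomes a statement about strict-étale covers of the semistable log scheme $\mfX_\sigma$ over $k^0$, and then invoke the already-established formal-to-rigid localization results of \cite{AGV} together with the identification of semistable log structures with their underlying schemes.
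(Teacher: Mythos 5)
Your plan correctly isolates the essential ingredients --- semistable reduction to supply local models, continuity to pass from $C$ to finite extensions of $K$, and a comparison of topologies between the formal-model side and the rigid generic fiber --- but it takes a considerably more laborious route than the paper's argument and leaves the decisive step underspecified. The paper's proof is essentially three observations: (a) by the continuity theorem \cite[Theorem 2.8.15]{AGV}, $\RigDA(C)\cong\varinjlim\RigDA(L)$ over finite extensions $L/K$ inside $C$; (b) via \cite[Notation 2.5.5]{AGV}, $\RigDA(C)$ is the category of ($\A^1$-local, Tate-stabilized) hypersheaves on the \'etale topos $\varinjlim \RigSm/L$; and (c) the \'etale topos on $\RigSm/K$ coincides with the rig-\'etale topos on potentially semistable formal models over $\mcO_K$, which is the content of \cite[Proposition 2.8]{CN19}. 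Combined with \Cref{rmk:logF=F}, which shows that the log structure on a semistable formal scheme is uniquely determined by the underlying formal scheme, (c) is exactly the claim that $\xi$ exhibits $\RigDA(C)$ as the rig-\'etale localization of $\logFDA^{\sst}(\mcO^\times_C)$. You never identify the input from \cite{CN19}, and in its place you give a general adjunction argument (essential surjectivity $+$ fully faithfulness of the localized right adjoint) whose crucial step (iii) is left as an ``expected obstacle.''

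The concrete gap is in your proposed resolution of that obstacle: passing to the special fiber via $\logFDA^{\sst}(\mcO_C^\times)\cong\logDA^{\sst}(k^0)$ does not help here. That equivalence (\Cref{iotaiseq}) is a statement about invariance of the strict-\'etale topology under the nilpotent thickening $\mfX_\sigma\hookrightarrow\mfX$ and carries no information about the rig-\'etale topology, which is the one you need to control to compute the localization $L_{\mathcal{W}}\logFDA^{\sst}(\mcO^\times_C)$. The comparison must be made between the formal model and its \emph{generic} fiber, not its special fiber; the difference between strict-\'etale covers of $\mfX$ and admissible/rig-\'etale covers of $\underline{\mfX}^{\rig}$ is precisely what \cite[Proposition 2.8]{CN19} resolves (by packaging semistable reduction at the level of sites), and it is not a formal consequence of the formal-to-rigid localization in \cite{AGV} for general smooth models together with the special-fiber equivalence. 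If you replace your step (iii) with a direct appeal to the topos comparison from \cite{CN19} (and drop the special-fiber detour), your plan collapses to essentially the paper's proof.
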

\begin{proof}We use that $\RigDA(C)\cong\varinjlim\RigDA(L)$ runs over finite extensions $L/K$ inside $C$ \cite[{Theorem 2.8.15}]{AGV}. Then, by \cite[Notation 2.5.5]{AGV}, $\RigDA(C)$ is the category of motives over the \'etale topos $\varinjlim\RigSm/L$. 
It now suffices to show that the \'etale topos on $\RigSm/K$ is equivalent to the rig-\'etale topos on potentially semistable formal models over $\mcO_K$. This is the content of \cite[Proposition 2.8]{CN19}. The same proof works in the equi-characteristic case.
\end{proof}

In light of \Cref{prop:sst}, we may restate \Cref{prop:comm}  as follows.
\begin{cor}\label{cor:locloc}Let $C$ be the completion of an algebraic closure of a discrete valuation field. 
	Via the following natural equivalence  
	\be
	\begin{tikzcd}
		\logFDA^{\sst}(\mcO_{C^\flat}^\times) \arrow[r, "\sim"] &  \logDA^{\sst}(\bar{k}^{0}) & \arrow[l, "\sim"'] \logFDA^{\sst}(\mcO^\times_{C}), 
	\end{tikzcd}
	\ee the rig-\'etale localization on $\logFDA^{\sst}(\mcO_{C^\flat}^\times)$  corresponds to the rig-\'etale localization on $\logFDA^{\sst}(\mcO_{C}^\times)$.\qed%
\end{cor}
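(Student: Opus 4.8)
The plan is to read this off as a reformulation of \Cref{prop:comm} through the identification provided by \Cref{prop:sst}, so I expect the argument to be essentially formal. The underlying mechanism is the standard one for Bousfield localizations: a reflective localization $L\colon\mathcal C\to\mathcal D$ of a presentable $\infty$-category is the same datum as the reflective full subcategory of $L$-local objects inside $\mathcal C$, equivalently as the strongly saturated class of morphisms that $L$ inverts. Hence, if $F\colon\mathcal C\xrightarrow{\sim}\mathcal C'$ is an equivalence and $L'\colon\mathcal C'\to\mathcal D'$ is another reflective localization fitting into a commutative square $L'\circ F\simeq G\circ L$ for some equivalence $G\colon\mathcal D\xrightarrow{\sim}\mathcal D'$, then $F$ carries the class of $L$-local equivalences bijectively onto the class of $L'$-local equivalences, and therefore identifies the two reflective subcategories of local objects; the monoidal refinement is automatic, since all the functors involved are symmetric monoidal and live in $\CAlg(\Prl)$.

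Concretely, I would take $F\colon\logFDA^{\sst}(\mcO_C^\times)\xrightarrow{\sim}\logFDA^{\sst}(\mcO_{C^\flat}^\times)$ to be the composite of the two special-fibre equivalences $\iota^*$ of \Cref{iotaiseq} with middle term $\logDA^{\sst}(\bar{k}^0)$; this is precisely the equivalence appearing in the statement, and it is also the left-hand column of the diagram of \Cref{prop:comm}. Writing $\xi_C\colon\logFDA^{\sst}(\mcO_C^\times)\to\RigDA(C)$ and $\xi_{C^\flat}\colon\logFDA^{\sst}(\mcO_{C^\flat}^\times)\to\RigDA(C^\flat)$ for the generic-fibre (log Monsky--Washnitzer) functors and $G\colon\RigDA(C)\xrightarrow{\sim}\RigDA(C^\flat)$ for the motivic tilting equivalence, \Cref{prop:comm} supplies a canonical invertible natural transformation $G\circ\xi_C\simeq\xi_{C^\flat}\circ F$, and \Cref{prop:sst} identifies $\xi_C$ (resp.\ $\xi_{C^\flat}$) with the rig-\'etale localization functor of $\logFDA^{\sst}(\mcO_C^\times)$ (resp.\ of $\logFDA^{\sst}(\mcO_{C^\flat}^\times)$).

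Combining these two inputs concludes the proof: a morphism $f$ in $\logFDA^{\sst}(\mcO_C^\times)$ is a rig-\'etale-local equivalence if and only if $\xi_C(f)$ is invertible, if and only if $G(\xi_C(f))\simeq\xi_{C^\flat}(F(f))$ is invertible, if and only if $F(f)$ is a rig-\'etale-local equivalence; since $F$ is an equivalence this yields a bijection between the two strongly saturated classes, equivalently an identification of the two reflective subcategories of rig-\'etale-local objects, which upon transport to the common middle category $\logDA^{\sst}(\bar{k}^0)$ is exactly the asserted correspondence of localizations. I do not anticipate a genuine obstacle: all the mathematical substance is already carried by \Cref{prop:sst} (itself resting on \cite[Proposition 2.8]{CN19} and \cite[Theorem 2.8.15]{AGV}) and by \Cref{prop:comm}/\Cref{prop:comm0} (compatibility of the log Monsky--Washnitzer functor with tilting). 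The only point warranting a sentence of care is the bookkeeping that the equivalence named in the statement literally coincides with the left column of the diagram of \Cref{prop:comm}, so that the commuting square $G\circ\xi_C\simeq\xi_{C^\flat}\circ F$ is the one we are entitled to invoke.
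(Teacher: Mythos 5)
Your proposal is correct and is exactly the paper's argument: the corollary is stated with \qed immediately after the remark ``In light of \Cref{prop:sst}, we may restate \Cref{prop:comm} as follows,'' so the intended proof is precisely the combination of \Cref{prop:sst} (identifying $\xi_C$ and $\xi_{C^\flat}$ as the rig-\'etale localizations) with \Cref{prop:comm} (the commuting square $G\circ\xi_C\simeq\xi_{C^\flat}\circ F$), which you have unpacked faithfully via the general Bousfield-localization observation.
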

\begin{rmk}
	The equivalence displayed in  \Cref{cor:locloc}, obtained from \Cref{iotaiseq}, looks like a ``stronger" form of the motivic tilting equivalence (of \cite{vezz-fw}) since we are comparing  $\RigDA(\C_p)$ and $\RigDA(\C_p^\flat)$ \emph{before} the two rig-\'etale localizations over $\C_p$ and over $\C_p^\flat$. Nonetheless, a priori it is not clear that such localizations agree, as they may not be expressed in terms of a Grothendieck topology defined  intrinsically on the special fibers.  
\end{rmk}

\section{The  motivic Hyodo--Kato realization}\label{HK realization}\label{sec:HK}
The aim of this section is to recall from \cite{CN19} the construction of (overconvergent) Hyodo--Kato cohomology on the rigid generic fiber and to prove that its tilt, which is a cohomology theory on  rigid analytic varieties over $\C_p^\flat$, is compatible with the classical Hyodo--Kato cohomology of Gro\ss e-Kl\"onne in the semistable case (\Cref{cor:HKtilt}). %
We will give both a ``geometric'' realization (that is, over $\C_p$) and an ``arithmetic'' realization (that is, over a local field). We note that only the former will be used in the proof of our main theorem. 
\subsection{A motivic geometric Hyodo--Kato realization}\label{subsec:HK}
Let $K$ be a complete discrete valuation field with a perfect residue field $k$ and $C$ be the completion of an algebraic closure $\bar{K}$ of $K$, whose residue field is denoted by $\bar k$.  We let $K_0$ denote the  field $W(k)[1/p]$ viewed as a subfield of $K$ and $K_0^{\unr}$ denote the maximal unramified extension of $K_0$ inside $\bar{K}$. 
 
We collect some definitions  %
on Hyodo--Kato cohomology, %
following Gro\ss e-Kl\"onne \cite{GK05}, Beilinson \cite{Bei13}, Ertl--Yamada \cite{EY19, EYcb} and Colmez--Nizio\l \cite{CN19}, reinterpreted using a motivic language. Hyodo--Kato cohomology on the rigid generic fiber defined in \cite{CN19} can be thought of as a rigid analytic analogue of  motivic statements of \cite{DN}, which are based on \cite{NekovarNiziol}.

Though there is a way to define Hyodo--Kato cohomology purely in terms of the generic fiber (see \Cref{sec:bond}), we follow in this section the classical approach using log structures. %

\begin{dfn}[{\cite[1.15]{Bei13}}]\label{dfn phi N}
	Let $\varphi\colon K_0\to K_0$ [resp. $\varphi\colon K_0^{\unr}\to K_0^{\unr}$] denote the endomorphism induced by the absolute Frobenius $a\mapsto a^p$ on the residue field. 
	\begin{enumerate}
		\item A \emph{$\varphi$-module over $K_0$} [resp. \emph{over $K_0^{\unr}$}] is a $K_0$-vector space [resp. a $K_0^{\unr}$-vector space] $D$ equipped with a $\varphi$-semilinear endomorphism $\varphi\colon D\to D$. 
		\item A \emph{$(\varphi,N)$-module over $K_0$} [resp. \emph{over $K_0^{\unr}$}] is a $\varphi$-module $D$ over $K_0$ [resp.\ over $K_0^\unr$] equipped with a linear endomorphism $N\colon D\to D$ satisfying $N\varphi=p\varphi N$. 
		We denote by $\mcD_{\varphi,N}(K_0)$ [resp. $\mcD_{\varphi,N}(K_0^{\unr})$] the DG-derived category of $(\varphi,N)$-modules over $K_0$ [resp. over $K_0^{\unr}$]. 
	\end{enumerate}
\end{dfn}
\begin{rmk}
The category $\mcD_{(\varphi,N)}(K_0^{\unr})$ is equivalent to the category $\varinjlim \mcD_{(\varphi,N)}(W(k')[1/p])$ computed in $\Prl$ as $k'$ runs among finite extensions of $k$ inside $\bar k$. More explicitly, a compact object is given by a bounded complex of finite dimensional $(\varphi,N)$-modules over $k'$ i.e. by  some compact object of $\mcD_{\varphi,N}(W(k')[1/p])$ for a sufficiently large $k'$. %
\end{rmk}

We use \Cref{log notation} for finite extensions of $K$ and $C$. 
In particular, for a finite extension of $k'$ of $k$ inside $\bar k$, we let ${k'}^0$ [resp.\ $\bar k^0$] denote the scheme $\Spec k'$ [resp.\ $\Spec \bar k$] equipped with the log structure associated to $\N\to k';1\mapsto 0$ [resp.\ $\Gamma=(\mc O_C\setminus\{0\})/\mc O_C^*\to \bar k$ sending $\gamma\ne1$ to $0$].

\begin{dfn}\label{dfn:HK}\label{GK0} 
	Let $X=(\underline{X},M_X)$ be a semistable log scheme over $k^0$. We denote by $R\Gamma^{}_{\HK}(X/W(k))$ the  Hyodo--Kato cohomology complex $R\Gamma^{\rig}_{\HK}(X,X)$  defined as in \cite[Definition 3.17 and Remark 3.19]{EY19} (see also \cite[Remark 5.3]{CN19} and  \cite[Section 1.3.3]{EYcb}).  It is  an object in $\mcD_{(\varphi,N)}(K_0)$, whose cohomology groups will be denoted by $H^i_{\HK}(X)$. %
\end{dfn}
\begin{rmk}\label{GKiscrys}
\begin{enumerate}
\item The complex above is quasi isomorphic to the log rigid cohomology complex defined by Gro\ss e-Kl\"onne in \cite[Lemma 1.4]{GK05}. %
\item \label{crys}By \cite[Section 3.11]{GK05}, if $\underline{X}$ is proper, the cohomology groups $H^i_\HK(X)$ are canonically isomorphic to the crystalline Hyodo--Kato cohomology groups defined in \cite[Theorem 5.1]{HK94}. 
\end{enumerate}
\end{rmk}

For a finite extension $k'$ of $k$ inside $\bar k$ and an integer $d\ge1$, let ${k'}^{(d)}$ denote the log scheme ${k'}^0$ viewed as a $k^0$-log scheme via the morphism ${k'}^0\to k^0$ induced by the inclusion $k\subset k'$ and the map $\N\to \N;1\mapsto d$. 
We always regard $\bar k^0$ as a ${k'}^{(d)}$-log scheme via the morphism $\iota_d\colon\bar{k}^0\to {k'}^{(d)}$ induced by the inclusion $k'\subset \bar{k}$ and the map $\N\cong|p|^{\frac{1}{d}\N}\subset \Gamma=(\mc O_C\setminus\{0\})/\mc O_C^*$.
For a semistable log scheme $X'$ over ${k'}^{(d)}$, we denote by $R\Gamma^{(d)}_{\HK}(X'/W(k'))$ the complex $R\Gamma^{}_{\HK}(X'/W(k'))$ equipped with the monodromy operator $N^{(d)}=\frac{1}{d}N$, where $N$ is the usual monodromy operator (cf.\ \cite{NekovarNiziol} and \cite[Section 3.1]{EY19}). 
Note that, if $X'$ comes from a semistable log scheme $X$ over $k^0$, i.e., $X'\cong X\times_{k^0}{k'^{(d)}}$, then we have a canonical $(\varphi,N)$-equivariant quasi-isomorphism 
\be
R\Gamma^{(d)}_{\HK}(X\times_{k^0}{k'}^{(d)}/W(k'))\cong R\Gamma^{}_{\HK}(X/W(k))\otimes_{W(k)[1/p]}W(k')[1/p].
\ee

\begin{rmk}
	The case $d=1$ of the above isomorphism is known as ``unramified base change" for Hyodo--Kato cohomology \cite[Paragraph 5.3.1(i)]{CN19}, whereas the case $d>1$ corresponds to ``ramified base change'' \cite[Section 3A]{NekovarNiziol}.
\end{rmk}

\begin{dfn}
\label{GK2}%
Let $X=(\underline{X},M_X)$ be a $|p|$-semistable log scheme over $\bar{k}^0$ (see \Cref{ss over log pt}). %
Let $R\Gamma^{}_{\HK}(X)$ denote the complex defined by
\be
\varinjlim R\Gamma^{(d)}_{\HK}(X_i/W(k_i))\otimes_{W(k_i)[1/p]}K_0^{\unr},
\ee
where the colimit is taken over the filtered category of objects $X_{i}/k_i^{(d_i)}$ with $d_i\ge1$ an integer, $k_i$ a finite extension of $k$ inside $\bar k$, and $X_i$ a semistable log scheme over $k_i^{(d_i)}$ together with an isomorphism $X_i\times_{k_i^{(d_i)}}\bar k^0\cong X$. Since the transition maps in the colimit are $(\varphi,N)$-equivariant (and even quasi-isomorphic), the complex $R\Gamma_\HK(X)$ is naturally equipped with a $(\varphi,N)$-structure and defines a functor $R\Gamma_\HK\colon \Sm^{\sst}/\bar k^0\to \mc D_{\varphi,N}(K_0^\unr)^\op$.
\end{dfn}

\begin{rmk}
	We  note that, thanks to the existence of  Hyodo--Kato comparison map  (in this setting the statement of \cite[Theorem 0.1]{GK05} suffices) the functor \be R\Gamma_{\HK}^{\GK}\colon \Sm^{\sst}/\bar{k}^0\to\mcD_{(\varphi,N)}(K_0^{\unr})\ee satisfies $\A^1$-invariance, \'etale descent  and Tate-stability (see also the proof of \Cref{CN0}) and hence produces a functor \be R\Gamma^{\GK}_{\HK}\colon\logDA^{{\sst}}(\bar{k}^0)\to \mcD_{{(\varphi,N)}}(K_0^{\unr})^{\op}.\ee 
\end{rmk}

We can extend this functor to log motives over $\mcO_C^\times$ as follows.

\begin{prop}\label{CN0}The composite functor
	\be
	\Sm^{\sst}/\mcO_C^\times\to \Sm^{\sst}/\bar{k}^0\stackrel{R\Gamma_{\HK}}{\longrightarrow}\mcD_{(\varphi,N)}(K_0^{\unr})
	\ee
	has rig-\'etale descent, $\A^1$-invariance and Tate stability. %
\end{prop}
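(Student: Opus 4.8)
The plan is to deduce everything from the fact that the composite --- call it $F$ --- factors through $\logFDA^{\sst}(\mcO_C^\times)$, so that the only genuinely new point is rig-\'etale descent. By the remark preceding the statement, $R\Gamma_{\HK}$ on $\Sm^{\sst}/\bar k^0$ already factors through $\logDA^{\sst}(\bar k^0)$. Composing with the equivalence $\iota^*\colon\logFDA^{\sst}(\mcO_C^\times)\xrightarrow{\sim}\logDA^{\sst}(\bar k^0)$ of \Cref{iotaiseq}, and using that $\iota^*$ carries $\Q_{\mcO_C^\times}(\mfX)$ to $\Q_{\bar k^0}(\mfX_\sigma)$, one sees that $F$ is obtained by pre-composing a functor $\logFDA^{\sst}(\mcO_C^\times)\to \mcD_{(\varphi,N)}(K_0^{\unr})^{\op}$ with the Yoneda embedding. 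Since $\logFDA^{\sst}(\mcO_C^\times)$ is by construction $\A^1$-invariant, strict-\'etale-local and Tate-stable, $F$ automatically enjoys $\A^1$-invariance, strict-\'etale descent and Tate stability; it remains only to upgrade the last to rig-\'etale descent.

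For this I would invoke \Cref{prop:sst}, which exhibits $\RigDA(C)$ as the rig-\'etale localisation of $\logFDA^{\sst}(\mcO_C^\times)$, together with \cite[Proposition 2.8]{CN19} and \cite[Theorem 2.8.15]{AGV}, identifying via $\xi$ the rig-\'etale topos on $\Sm^{\sst}/\mcO_C^\times$ with the \'etale topos on $\varinjlim_L\RigSm/L$. Granting the strict-\'etale descent already in hand, rig-\'etale descent of $F$ becomes equivalent to invariance under change of semistable model: whenever $\mfX,\mfX'$ in $\Sm^{\sst}/\mcO_C^\times$ have isomorphic rigid generic fibres, the induced map
\[
R\Gamma_{\HK}(\mfX_\sigma)\longrightarrow R\Gamma_{\HK}(\mfX'_\sigma)
\]
is a $(\varphi,N)$-equivariant quasi-isomorphism. (By Raynaud's theory and semistable reduction over $\mcO_C$, any two such models are dominated by a third semistable one, so it suffices to treat a proper modification $\mfX'\to\mfX$.)

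This invariance is the main obstacle. The cleanest route is to invoke the overconvergent Hyodo--Kato cohomology $R\Gamma_{\HK}^{\mathrm{CN}}$ of the rigid generic fibre constructed in \cite{CN19}: the Hyodo--Kato comparison (for which \cite[Theorem 0.1]{GK05} suffices in this semistable setting) provides, functorially in the model, a $(\varphi,N)$-equivariant quasi-isomorphism $R\Gamma_{\HK}(\mfX_\sigma)\cong R\Gamma_{\HK}^{\mathrm{CN}}(\mfX^{\rig})$, whose target depends only on $\mfX^{\rig}$; this yields the invariance and simultaneously transports \'etale descent from the generic-fibre theory. Alternatively one argues internally: such a modification is dominated by admissible formal blow-ups with centre in the special fibre, which reduce --- after a further blow-up --- to compositions of log blow-ups (vertical log-\'etale subdivisions of the corner monoids) and of strict blow-ups along smooth centres; log-rigid cohomology is invariant under the former, and the contributions of the latter cancel since the generic fibre is untouched. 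Checking that the Hyodo--Kato comparison is functorial in the semistable model --- equivalently, the log-blow-up invariance of log-rigid cohomology together with the blow-up formula --- is where the real work lies. Once rig-\'etale descent is established, the universal property of motives (cf.\ the remark after \Cref{dfn:mot} and \cite[Remark 2.8]{LBV}) together with \Cref{prop:sst} lets $F$ descend along $\xi$ to the geometric Hyodo--Kato realisation $\RigDA(C)\to\mcD_{(\varphi,N)}(K_0^{\unr})^{\op}$.
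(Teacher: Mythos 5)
Your proposal is essentially correct, but it takes a more circuitous route than the paper and, within the paper's own logic, risks a small circularity. The paper's proof is direct: after observing that $F$ has Zariski descent, it fixes a rig-\'etale cover $\mfU\to\mfX$ of affine semistable weak formal schemes and invokes the quasi-isomorphism $R\Gamma_\HK(\mfX_\sigma)\otimes_{K_0^{\unr}}C\cong R\Gamma^\dagger_\dR(\mfX_C)$ from \cite[Formula 5.16]{CN19}. Since overconvergent de Rham cohomology already has \'etale descent by \cite[Proposition 5.12]{vezz-MW}, and the comparison is a faithfully flat scalar extension of a perfect complex, descent for $R\Gamma_\HK$ follows at once; the same comparison gives $\A^1$-invariance from $\B^1$-invariance of the overconvergent de Rham complex, and Tate stability is checked by the explicit computation $H^1_\HK(\G_{m,\bar{k}^0})\cong K_0^{\unr}(1)$. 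Your version instead first extracts strict-\'etale descent, $\A^1$-invariance and Tate stability from the remark (which in the paper itself defers to this very proof, so you should be aware that relying on it blindly is a potential circularity; the paper avoids this by proving all three properties independently via the de Rham comparison), and then reduces rig-\'etale descent to ``independence of the semistable model'' via Raynaud/Temkin before invoking $R\Gamma_\HK^{\mathrm{CN}}$. This is a valid conceptual decomposition and your ``cleanest route'' does hit the same key input from \cite{CN19} and \cite{GK05}, but the Raynaud/semistable-reduction dévissage to modifications is extra machinery not needed in the paper's argument; and your alternative route via log blow-ups would, as you yourself flag, require substantial additional work that the de Rham comparison sidesteps entirely.
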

The proposition is essentially a formality given the comparison theorems of Colmez--Nizio\l. Nonetheless, we  briefly recall these constructions, in order to set some notation. 

\begin{proof}
Note that %
 the functor $\mf X\mapsto R\Gamma_\HK(\mf X_\sigma)$ %
 satisfies Zariski descent. 
Thus, in order to prove it has \'etale descent, we may 
take a rig-\'etale  cover $\mfU\to\mfX$ in $\FSch^{\sst}/\mcO_{\C_p}$ underlying a rig-\'etale cover of affine semistable weak formal schemes (see \cite[Proposition 2.13]{CN19}) and show that $R\Gamma_{\HK}^{\GK}(\mfX_\sigma)$ is the limit of the Cech nerve associated to the cover $\mfU$. 

We consider the quasi-isomorphism 
\be
R\Gamma_\HK(\mfX_\sigma){\otimes}_{K_0^{\unr}}C\cong R\Gamma^\dagger_\dR(\mf X_C)
\ee  
from \cite[Formula 5.16]{CN19} (note that the complex $R\Gamma_\HK(\mfX_\sigma)$ is a perfect complex under our assumptions, so that the tensor $\widehat{\otimes}^R_{K_0^{\unr}}$ of \emph{loc. cit.}  is merely a scalar extension).  

Since $R\Gamma^\dagger_{\dR}$ has \'etale descent (see \cite[Proposition 5.12]{vezz-MW}) we conclude that the complex on the left is the limit of the Cech nerve associated to the cover $\mfU$, as wanted.  

Similarly, ${\A}^1$-invariance follows from the $\B^1$-invariance of the %
 overconvergent de Rham complex. %
Tate stability follows from the explicit computation $H^1_{\HK}(\G_{m,\bar k^0})\cong K_0^\unr(1)$. 
		\end{proof}
	
	\begin{dfn}
\begin{enumerate}
\item 
	By \Cref{prop:sst} and \Cref{CN0}, the functor 
	\be
	\Sm^{\sst}/\mcO_C^\times\to \Sm^{\sst}/\bar{k}^0\stackrel{R\Gamma_{\HK}}{\longrightarrow}\mcD_{(\varphi,N)}(K_0^{\unr})^\op
	\ee
	 induces a functor $\RigDA(C)\to \mc D_{\varphi,N}(K_0^\unr)^\op$, which we again denote by $R\Gamma_\HK$. 
		We will use the same notation for the  following induced functors
\[
\logDA^{\sst}(\bar{k}^0)\stackrel{\xi}{\to}\RigDA(C)\stackrel{R\Gamma_{\HK}}{\longrightarrow}\mcD_{(\varphi,N)}(K_0^{\unr})^\op\]
\[\DA(C)\stackrel{\An^*}{\to}\RigDA(C)\stackrel{R\Gamma_{\HK}}{\longrightarrow}\mcD_{(\varphi,N)}(K_0^{\unr})^\op,
\]
where $\An^*$ denotes the functor induced by analytification. 
	\item 	We let $R\Gamma_{\HK}^\flat$ denote the composite functor
	\be
		R\Gamma_{\HK}^\flat\colon\RigDA(C^\flat)\cong \RigDA(C)\stackrel{R\Gamma_{\HK}}{\longrightarrow}\mcD_{(\varphi,N)}(K_0^{\unr})^\op,
		\ee
		where the first equivalence is the motivic tilting equivalence given in \cite[Theorem 7.26]{vezz-fw}. 
		We will use the same notation for the   induced functor
\be
		\DA(C^\flat)\stackrel{\An^*}{\to}\RigDA(C^\flat)\stackrel{R\Gamma^\flat_{\HK}}{\longrightarrow}\mcD_{(\varphi,N)}(K_0^{\unr})^\op.
\ee
\end{enumerate}
\end{dfn}

	\begin{rmk}\label{diagram of realizations}
\Cref{CN0} gives us a commutative diagram of realizations:
		\be\begin{tikzcd}
				\logFDA^{\sst}(\mcO_{C}^\times)\arrow[r, "\sim"] \arrow[d, "\xi"]& 	\logDA^{\sst}(\bar{k}^0)  \arrow[d, "R\Gamma^{\GK}_{\HK}"] \\
				\RigDA(C)  \arrow[r, "R\Gamma^{}_{\HK}" ] & \mcD_{\varphi,N}(K_0^{\unr})^{\op},
			\end{tikzcd}
		\ee	
		where the left vertical map is the logarithmic Monsky--Washnitzer functor. Recall again from \Cref{prop:sst} that $\xi$ is a  localization with respect to the rig-\'etale topology. 
	\end{rmk}
	
	\begin{rmk}The above definition of Hyodo-Kato cohomology on the rigid generic fiber is essentially borrowed from \cite[Section 5.3.1]{CN19}. %
Their cohomology theory  also has \'etale descent (by construction), $\B^1$-invariance, and Tate stability, and hence induces a realization functor
\be
		R\Gamma_{\HK}^{\CN}\colon \RigDA(C)\cong\RigDA^{\dagger}(C)\to\mcD_{(\varphi,N)}(K_0^{\unr})^{\op}.
\ee
		Here, $\RigDA^{\dagger}(C)$ denotes the category of overconvergent rigid analytic motives (see \cite[Definition 4.18]{vezz-MW}), which is equivalent to $\RigDA(C)$ via the canonical functor sending a dagger variety to the underlying rigid variety \cite[Theorem 4.23]{vezz-MW}. 
		
As they agree  on  semi-stable models and both satisfy rig-\'etale descent, the  functors $R\Gamma_{\HK}^{\CN}$ and $R\Gamma_{\HK}^{}$ (of \Cref{CN0}) agree. 
	\end{rmk}
		
				\begin{rmk}\label{kunneth}
			The realization $R\Gamma^{}_{\HK}\colon\RigDA(C)^{\ct}\to \mcD_{\varphi,N}(K_0^{\unr})^{\op}$	 is monoidal. Indeed, the first category is generated under finite colimits by motives of the form $\Q_{C}(X^{\an})$ with $X/C$ a smooth and proper algebraic variety by \cite[Th\'eor\`eme 2.5.35]{ayoub-rig}, and the monoidal structure is the one extended by the Day convolution (see \cite[Remark 2.1.6]{AGV}) $\Q_{C}(X)\otimes\Q_{C}(X')=\Q_{C}(X\times X')$. 
              In particular, monoidality of $R\Gamma^{}_{\HK}$ {can be tested} on these motives, and in this case the formula $R\Gamma_{\HK}(X)\otimes R\Gamma_{\HK}(X')\cong R\Gamma_{\HK}(X\times X')$ is the usual K\"unneth formula for the algebraic Hyodo--Kato cohomology (see e.g. \cite[Lemma 2.21]{DN}). %
		\end{rmk}

\begin{cor}\label{cor:HKtilt}
Let $F$ be a finite extension of $k\lcc p^\flat\rcc$ inside $C^\flat$, where $p^\flat\in C^\flat$ is the element defined by a system of $p$-th power roots of $p$. 
Let $\kappa$ denote the residue field of $F$ and $e$ the ramification index of the finite extension $F/k\lcc p^\flat \rcc$. 
Let $\mf Z$ be a semistable formal scheme over $\Spf\mc O_F$ equipped with the natural log structure. 
Let $Z$ denote the rigid generic fiber and $Z_0$ the special fiber as a log scheme. 
Then we have a canonical quasi-isomorphism 
\be
R\Gamma^\flat_{\HK}(Z_{C^\flat})\cong R\Gamma_{\HK}^{(e)}(Z_0/W(\kappa))\otimes_{W(\kappa)[1/p]}K_0^{\unr}
\ee
in $\mc D_{\varphi,N}(K_0^\unr)$, where $R\Gamma^{(e)}_\HK$ is as in \Cref{GK2}.  
\end{cor}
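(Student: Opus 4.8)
The plan is to deduce this from the compatibility of the log Monsky--Washnitzer functor with tilting (\Cref{prop:comm}), the identification of $R\Gamma_{\HK}\circ\xi$ with the Gro\ss e--Kl\"onne realization of the special fiber (\Cref{diagram of realizations}), and the definition of $R\Gamma_{\HK}$ on $|p|$-semistable log schemes over $\bar k^0$ (\Cref{GK2}).

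First I would set up the log-geometric picture. By \Cref{rmk:logF=F} the semistable formal scheme $\mf Z$ with its natural log structure is an object of $\Sm^{\sst}/\mcO_F^\times$. Pulling back along the morphism of log formal schemes $\mcO_{C^\flat}^\times\to\mcO_F^\times$ induced by $\mcO_F\setminus\{0\}\hookrightarrow\mcO_{C^\flat}\setminus\{0\}$ gives $\mf Z':=\mf Z\times_{\mcO_F^\times}\mcO_{C^\flat}^\times$, which lies in $\Sm^{\sst}/\mcO_{C^\flat}^\times$ and whose rigid generic fiber is $Z_{C^\flat}$; hence $\Q_{C^\flat}(Z_{C^\flat})=\xi_{C^\flat}\bigl(\Q_{\mcO_{C^\flat}^\times}(\mf Z')\bigr)$. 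Since base change commutes with passing to the special fiber, the underlying scheme of $\mf Z'_\sigma$ is $Z_0\times_{\Spec\kappa}\Spec\bar k$ and its log structure is pulled back from $Z_0/\kappa^0$ along the map $\bar k^0\to\kappa^0$ sending $1\in\N$ to the class of a uniformizer $\varpi_F$ of $F$; since $\varpi_F$ has valuation $\frac1e v(p^\flat)$ and $\sharp$ preserves valuations, \Cref{exm:logstructures} identifies this class with $|p|^{1/e}\in\Gamma=(\mcO_C\setminus\{0\})/\mcO_C^*$, so that $\mf Z'_\sigma\cong Z_0\times_{\kappa^{(e)},\iota_e}\bar k^0$ in the notation of \Cref{GK2}. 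In particular $\mf Z'_\sigma$ is $|p|$-semistable over $\bar k^0$.

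Next I would run the motivic comparison. By construction $R\Gamma^\flat_{\HK}(Z_{C^\flat})$ is $R\Gamma_{\HK}$ applied to the image of $\xi_{C^\flat}\bigl(\Q_{\mcO_{C^\flat}^\times}(\mf Z')\bigr)$ under the motivic tilting equivalence $\RigDA(C^\flat)\cong\RigDA(C)$. By \Cref{prop:comm} this image coincides, up to the canonical invertible natural transformation, with $\xi_C$ of the object of $\logFDA^{\sst}(\mcO_C^\times)$ corresponding to $\Q_{\mcO_{C^\flat}^\times}(\mf Z')$ under the equivalences $\logFDA^{\sst}(\mcO_{C^\flat}^\times)\cong\logDA^{\sst}(\bar k^0)\cong\logFDA^{\sst}(\mcO_C^\times)$ of \Cref{iotaiseq}, and the intermediate object in $\logDA^{\sst}(\bar k^0)$ is precisely $\Q_{\bar k^0}(\mf Z'_\sigma)$. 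Using that $R\Gamma_{\HK}\circ\xi_C$ agrees with $R\Gamma^{\GK}_{\HK}$ under the equivalence $\logFDA^{\sst}(\mcO_C^\times)\cong\logDA^{\sst}(\bar k^0)$ (\Cref{diagram of realizations}), we obtain a quasi-isomorphism
\[
R\Gamma^\flat_{\HK}(Z_{C^\flat})\;\cong\;R\Gamma^{\GK}_{\HK}(\mf Z'_\sigma)\;=\;R\Gamma^{\GK}_{\HK}\bigl(Z_0\times_{\kappa^{(e)},\iota_e}\bar k^0\bigr)
\]
in $\mcD_{\varphi,N}(K_0^\unr)^\op$, hence automatically compatible with the $(\varphi,N)$-structures. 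Finally, unwinding \Cref{GK2}: $Z_0/\kappa^{(e)}$ is one of the semistable models appearing in the filtered system computing $R\Gamma_{\HK}(Z_0\times_{\kappa^{(e)}}\bar k^0)$ (with $k_i=\kappa$ and $d_i=e$), and since every transition map in that system is a $(\varphi,N)$-equivariant quasi-isomorphism, the colimit is quasi-isomorphic to the corresponding term $R\Gamma^{(e)}_{\HK}(Z_0/W(\kappa))\otimes_{W(\kappa)[1/p]}K_0^\unr$. Concatenating the three quasi-isomorphisms yields the claim.

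The argument is essentially formal once one has \Cref{prop:comm} and \Cref{GK2}; the only delicate point, and the one I would treat carefully, is the bookkeeping of the ramification index $e$ of $F/k\lcc p^\flat\rcc$. It enters in two coupled places which must be made to match: in the map $\iota_e$ used to exhibit $\mf Z'_\sigma$ as a base change of $Z_0$ (equivalently, in verifying that $\sharp$ sends the class of $\varpi_F$ to $|p|^{1/e}$), and, correspondingly, in the normalized monodromy operator $N^{(e)}=\frac1e N$ that \Cref{GK2} attaches to the model $Z_0/\kappa^{(e)}$. Once this is aligned, together with the identification of the two log structures on $\bar k$ via \Cref{exm:logstructures}, the comparison follows.
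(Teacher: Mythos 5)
Your proof is correct and follows essentially the same route as the paper's: invoke the commutativity of the log Monsky--Washnitzer functor with tilting (\Cref{prop:comm}/\Cref{prop:comm0}), translate via \Cref{diagram of realizations}, and unwind \Cref{GK2}. You have simply made explicit the bookkeeping — in particular the verification that $\mf Z'_\sigma\cong Z_0\times_{\kappa^{(e)},\iota_e}\bar k^0$ via $|\varpi_F|=|p|^{1/e}$ and \Cref{exm:logstructures} — that the paper leaves implicit in its one-line citation of \Cref{prop:comm0}.
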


\begin{proof}
By \Cref{prop:comm0}, we have $\Q(Z_{C^\flat})^\sharp\cong \xi\Q(Z_0\times_{\kappa^{(e)}}\bar k^0)$. 
Then, by the definitions of $R\Gamma_\HK^\flat$ and $R\Gamma_\HK$ (cf.\ \Cref{diagram of realizations}), the left hand side is canonically isomorphic to $R\Gamma_\HK(Z_0\times_{\kappa^{(e)}}\bar k^0)$, which is then identified with the right hand side by \Cref{GK2}. 
\end{proof}

\begin{rmk}
Note that the normalization on the monodromy operator does not alter the monodromy filtration on the cohomology groups of  %
 $R\Gamma_{\HK}^{(e)}(Z_0/W(\kappa))$.%
\end{rmk}

\subsection{Abstract properties of realizations on algebraic motives}
In this section, we recall from \cite{CD-Weil,DM} that any monoidal algebraic realization is %
automatically equipped with some formal structures of a Weil cohomology, such as Poincar\'e duality and a formalism of Chern classes. 
For such structures on  Hyodo--Kato cohomology, see also %
\cite[Section 5]{NekovarNiziol}, \cite{EY20}, \cite{LP16}. %

\begin{dfn}Let $\kappa$ be a field. 
	We let $\DA(\kappa)^{\ct}$ be the full subcategory of compact objects (or, equivalenlty, of fully dualisable objects, see \cite{riou-dual}) in $\DA(\kappa)=\DA_{\et}(\kappa,\Q)=\DM_{\et}(\kappa,\Q)$. The category $\DA(\kappa)^{\ct}$ is equivalent to Voevodsky's category of geometric motives \cite{voe-tri}, \cite[Proposition 8.3, Th\'eor\`eme B.1]{ayoub-etale}. 
\end{dfn}

Let $\kappa$ be a field and $\mcC$ be  a {$\Q$-linear} symmetric monoidal, compactly generated infinity-category %
with a complete $t$-structure and a $\Q$-linear $t$-exact conservative functor $f\colon \mcC\to\mcD(\Lambda)$ for some field extension $\Lambda/\Q$. Let  $R\Gamma\colon \DA(\kappa)\to \mc \mcC^{\op}$  be a (cohomological) realization  functor, i.e., a $\Q$-linear functor in $\Prlo$. 
We assume that its restriction to compact objects {$R\Gamma^{\ct}$ is monoidal}. We will denote by $H^i$ the $i$-th cohomology of $R\Gamma$. 

\begin{rmk}
Note that, by \Cref{kunneth} and the monoidality of the analyitification functor \cite[Proposition 2.2.13]{AGV}, the algebraic Hyodo--Kato realizations $R\Gamma_{\HK}\colon\DA(C)\to \mc D_{\varphi,N}(K_0^\unr)$ and $R\Gamma_\HK^\flat\colon\DA(C^\flat)\to \mc D_{\varphi,N}(K_0^\unr)$ fit in this framework, and all the following properties can be deduced for such algebraic  Hyodo--Kato (co)homologies.
\end{rmk}

\begin{thm}[{Poincar\'e duality; \cite[Theorem 4.3.2]{voe-tri}, \cite[Theorem 1]{CD-Weil} \cite[Theorem 3.11]{ayoub-ICM}}]\label{poincare}
Let $X$ be %
a proper smooth $\kappa$-scheme purely of dimension $d$. 
Then the motive $\Q(X)$ has  a strong dual  in $\DA(\kappa)$ %
given by $\Q(X)(-d)[-2d]$. %
Thus, 
$R\Gamma(X) $ is strongly dualizable with dual $R\Gamma(X)(d)[2d] $, and hence we have a perfect pairing in $\mc C$%
 \be H^i(X)\otimes H^{2d-i}(X)(d)\to 1.\ee
\end{thm}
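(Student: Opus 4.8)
The statement is the standard ``realization sends dualizable motives to dualizable objects, compatibly with duals'' principle, specialized to proper smooth schemes. The plan is to split the argument into a purely motivic part and a functorial transport part. First I would recall (citing \cite{voe-tri, ayoub-ICM}) that for $X$ proper smooth over $\kappa$ of pure dimension $d$, the motive $\Q(X)\in\DA(\kappa)^{\ct}$ is strongly dualizable with strong dual $\Q(X)(-d)[-2d]$; this is precisely \cite[Theorem 4.3.2]{voe-tri} in Voevodsky's category, transported to $\DA(\kappa)$ via the equivalence on compact objects recalled just before the statement. In particular the evaluation and coevaluation morphisms
\[
\Q(X)\otimes\Q(X)(-d)[-2d]\to\Q,\qquad \Q\to \Q(X)(-d)[-2d]\otimes\Q(X)
\]
exhibit this duality in the symmetric monoidal category $\DA(\kappa)^{\ct}$.

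Next I would invoke the hypothesis that the restriction $R\Gamma^{\ct}\colon\DA(\kappa)^{\ct}\to\mcC^{\op}$ is monoidal. A monoidal functor between symmetric monoidal $\infty$-categories preserves strongly dualizable objects and carries the dual of an object to the dual of its image: applying $R\Gamma^{\ct}$ to the evaluation and coevaluation maps above produces the corresponding structure maps in $\mcC^{\op}$, hence a strong duality in $\mcC$ between $R\Gamma(X)$ and $R\Gamma(\Q(X)(-d)[-2d])=R\Gamma(X)(d)[2d]$, where the Tate twist and shift on the target side are defined so that $R\Gamma$ intertwines them (this is part of the data of a realization in $\Prlo$, together with monoidality against $\Q(1)$). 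Thus $R\Gamma(X)$ is strongly dualizable with dual $R\Gamma(X)(d)[2d]$.

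Finally, to extract the perfect pairing on cohomology groups I would pass through the conservative, $t$-exact, $\Q$-linear functor $f\colon\mcC\to\mcD(\Lambda)$. The evaluation map $R\Gamma(X)\otimes R\Gamma(X)(d)[2d]\to 1$ in $\mcC$ induces, on $H^i\otimes H^{2d-i}(d)$, a map to $\Lambda$ (equivalently to the unit), and the duality identity (the triangle/zig-zag identities for ev and coev) forces this pairing to identify each factor with the $\Lambda$-linear dual of the other after applying $f$; since $f$ is conservative and $t$-exact, perfectness of the pairing in $\mcC$ follows from perfectness in $\mathrm{Vect}_\Lambda$. This yields the displayed perfect pairing $H^i(X)\otimes H^{2d-i}(X)(d)\to 1$.

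\textbf{Main obstacle.} The only genuinely delicate point is bookkeeping rather than mathematics: making sure that the Tate-twist/shift conventions on the realization target match those on the motivic side, so that ``dual of $\Q(X)(-d)[-2d]$'' really goes to ``$R\Gamma(X)(d)[2d]$'' on the nose, and that the $t$-structure is compatible with $\otimes$ enough to read off the cohomological pairing degree by degree; once the monoidality of $R\Gamma^{\ct}$ and the duality of $\Q(X)$ are granted, the rest is formal.
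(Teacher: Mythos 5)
Your proposal is correct and reconstructs exactly the argument the paper intends: the motivic duality for $\Q(X)$ is directly cited from Voevodsky/Cisinski–Déglise/Ayoub, and the rest (monoidality of $R\Gamma^{\ct}$ preserving strong duals, then reading off the cohomological pairing via the $t$-exact conservative functor $f$) is formal, which is why the paper states the theorem without an explicit proof beyond the citations. There is nothing to add; your treatment of the only delicate point, matching the twist/shift conventions and extracting degree-by-degree perfectness through $f$, is the right way to make the formal step precise.
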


Now we recall that {(oriented)} motivic realizations come naturally equipped with a theory of Chern classes and cycle classes (cf.\ {\cite[Section 2.3]{CD-Weil}}).

Let $R\Gamma^{\ct\vee}$ be the composition of $R\Gamma^{\ct}$ with the (canonical) dual endofunctor on $\DA(\kappa)^{\ct}$. Since $\DA(\kappa)=\Ind(\DA(\kappa)^{\ct})$ {\cite[Lemma 5.3.2.9]{HA}} we can extend this realization formally to a monoidal  functor that preserves colimits (i.e. to a functor in $\Prloo$, see \cite[Lemma 5.3.5.8]{lurie}):
\be
R\Gamma^\vee\colon \DA_{}(\kappa)^{}\to\mcC.
\ee	
This corresponds to the associated \emph{homological} realization. Informally, if $M$ is a colimit of  compact objects $M=\colim K_i$ then its image is $\colim R\Gamma(K_i^\vee)$. 

Let $R\Gamma^\vee_*$ denote a right adjoint of $R\Gamma^\vee$ and $\mathcal{E}$ be the object $ R\Gamma^\vee_*1$. It is a $\mathbb{E}_\infty$-ring object in $\DA(\kappa,\Lambda)$ (see {\cite[Proposition 2.5.5.1]{SAG}}) that represents the  cohomology theory $f\circ R\Gamma$ on compact objects. In particular, we can apply  the construction in \cite[2.1]{DM} which yields the following.%

\begin{prop}[{Cycle classes}] \label{chern}%
For a  quasi-compact smooth scheme $X$ over $\kappa$ and for each integer $n\ge0$, we let 
\be
\cyc\colon \CH^n(X)\to H^{2n}(X)(n).
\ee
denote the map $\sigma_{\mathcal{E}}$ defined in \cite[(2.1.3.b)]{DM}. 
These maps are compatible with pullback and pushforward. 
\end{prop}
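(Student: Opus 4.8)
The plan is to obtain the cycle class maps purely formally from the theory developed in \cite{DM}, applied to the $\mathbb{E}_\infty$-ring object $\mathcal{E}$ representing our realization. First I would observe that the right adjoint $R\Gamma^\vee_*$ exists because $R\Gamma^\vee$ is in $\Prloo$, and that $\mathcal{E} = R\Gamma^\vee_* 1$ is indeed an $\mathbb{E}_\infty$-ring object in the sense required by \cite[Proposition 2.5.5.1]{SAG}: the lax monoidal structure on the right adjoint of a monoidal colimit-preserving functor equips $R\Gamma^\vee_* 1$ with a commutative algebra structure. The key point to record is that, for a compact smooth $X$, the mapping space $\Map_{\DA(\kappa)}(\Q(X), \mathcal{E}[2n](n))$ computes $H^{2n}(f\circ R\Gamma(X))(n)$ by adjunction and the fact that $R\Gamma^{\ct\vee}$ on a dualizable object recovers $R\Gamma$ up to dualizing; here the hypotheses that $R\Gamma^\ct$ is monoidal and that $f$ is $t$-exact and conservative guarantee that the $t$-structure on $\mathcal{C}$ transports correctly so that $H^{2n}$ is computed as expected.

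Next I would invoke the construction of \cite[Section 2.1]{DM} essentially verbatim. Given an oriented ring spectrum $\mathcal{E}$ in $\DA(\kappa,\Lambda)$ — orientation coming from the motivic formalism, i.e.\ the existence of a class in $\tilde H^2(\mathbb{P}^\infty)(1)$, which in turn follows from the projective bundle formula available for any motivic realization (or simply from the fact that $\mathcal{E}$ is a module over the motivic cohomology/$\Lambda$-linear sphere, which is canonically oriented) — \cite{DM} produces for each $n$ a natural map $\sigma_{\mathcal{E}}\colon \CH^n(X) \to H^{2n}(X)(n)$ for smooth quasi-compact $X$, together with the verification that these are compatible with flat pullback and with proper pushforward (using Poincar\'e duality, \Cref{poincare}, to define the latter). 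I would simply cite \cite[(2.1.3.b)]{DM} for the definition and \cite[2.1]{DM} for the compatibilities, remarking that the only inputs needed are the ones we have just checked: $\mathcal{E}$ is an $\mathbb{E}_\infty$-ring in $\DA(\kappa)$ representing $f\circ R\Gamma$ on compact objects, and the ambient formalism of six operations / Poincar\'e duality on $\DA(\kappa)$.

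The main obstacle — really the only non-bookkeeping point — is verifying that $\mathcal{E}$ genuinely \emph{represents} $f\circ R\Gamma$ on compact objects, i.e.\ that $\Map_{\DA(\kappa)}(\Q(X), \mathcal{E})\simeq f(R\Gamma(X))$ functorially, including the Tate twists, and that this identification is compatible with the ring structure on $\mathcal{E}$ so that cup products match. I would handle this by the standard argument: for $M$ compact (hence dualizable with dual $M^\vee$), one has
\be
\Map_{\DA(\kappa)}(M, \mathcal{E}) = \Map_{\DA(\kappa)}(M, R\Gamma^\vee_* 1) \simeq \Map_{\mathcal{C}}(R\Gamma^\vee(M), 1) \simeq \Map_{\mathcal{C}}(R\Gamma(M^\vee)^\vee, 1) \simeq R\Gamma(M),
\ee
where the last identification uses dualizability of $R\Gamma(M^\vee)$ in $\mathcal{C}$ (which follows from monoidality of $R\Gamma^\ct$), and one then applies $f$; the multiplicativity is inherited from the lax monoidality of $R\Gamma^\vee_*$. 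Once this is in place, plugging into \cite{DM} is formal, so I would keep the write-up short, emphasizing the representability identification and citing \cite{CD-Weil,DM} for the rest.
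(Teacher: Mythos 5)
Your proposal takes essentially the same route as the paper: build $\mathcal{E} = R\Gamma^\vee_*\mathbf{1}$, observe it is an $\mathbb{E}_\infty$-ring object that represents $f\circ R\Gamma$ on compacts, and then delegate the construction of $\sigma_{\mathcal{E}}$ and its functoriality to \cite{DM}. The paper's recorded proof is literally the one-line citation "Follows from \cite[2.1.3 and 2.1.6.(4)]{DM} and \cite[Proposition 4.2.3 and Corollary 4.2.5]{voe-tri}," with the construction of $\mathcal{E}$ done in the paragraph before the proposition; you are simply spelling out what the paper leaves implicit (in particular the representability computation), which is a fine choice.

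Two small points. First, there is a slip in your chain of equivalences: you wrote $\Map_{\mcC}(R\Gamma^\vee(M),\mathbf{1}) \simeq \Map_{\mcC}(R\Gamma(M^\vee)^\vee,\mathbf{1})$, but by the definition $R\Gamma^{\ct\vee} = R\Gamma^{\ct}\circ(-)^\vee$ the middle term should be $\Map_{\mcC}(R\Gamma(M^\vee),\mathbf{1})$ (no extra dual); the rest goes through as $\Map_{\mcC}(R\Gamma(M^\vee),\mathbf{1}) \simeq R\Gamma(M^\vee)^\vee \simeq R\Gamma(M^{\vee\vee}) \simeq R\Gamma(M)$ using dualizability of $R\Gamma(M^\vee)$ in $\mcC$ (monoidality of $R\Gamma^{\ct}$). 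If read literally your middle term would give $R\Gamma(M)^\vee$, not $R\Gamma(M)$. Second, the paper also cites \cite[Proposition 4.2.3 and Corollary 4.2.5]{voe-tri} for the identification $\CH^n(X) \cong \Hom_{\DA(\kappa)}(\Q(X),\Q(n)[2n])$ that underlies the source of $\sigma_{\mathcal{E}}$; you lean on \cite{DM} alone, which is defensible since \cite{DM} itself uses this input, but it is worth recording the reference since it is what makes the domain of the map a Chow group rather than an abstract motivic Hom.
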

\begin{proof}
Follows from \cite[2.1.3 and 2.1.6.(4)]{DM} and \cite[Proposition 4.2.3 and Corollary 4.2.5]{voe-tri}.
\end{proof}

The following result gives a nonzero criterion which is used in the proof of our main theorem, following Scholze. 
\begin{cor}\label{nonvanishing}
Let $h\colon Z\to Y$ be a morphism of projective smooth schemes over $\kappa$ with $Z$ geometrically irreducible of dimension $d$. 
Assume that the push-forward $h_![Z]$ is a positive cycle class \cite[\S12]{Fulton}. 
Then the natural map $H^{2d}(Y)\to H^{2d}(Z)$ is a nonzero map. %
\end{cor}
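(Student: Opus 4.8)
The plan is to exhibit an explicit class $\alpha\in H^{2d}(Y)$ — a top power of an ample class — whose pullback along $h$ is nonzero, detecting the nonvanishing by ``integrating over $Z$''. First I would observe that necessarily $\dim Y\ge d$: otherwise $\dim h(Z)<d$ and $h_*[Z]=0$, contradicting the positivity hypothesis; write $e=\dim Y$. Since $Z$ is proper, smooth and geometrically irreducible of dimension $d$, Poincar\'e duality (\Cref{poincare}) applied to $Z$ together with the identification $H^0(Z)=1$ (this is where geometric irreducibility is used) furnishes a trace map $\tr_Z\colon H^{2d}(Z)(d)\to 1$; it therefore suffices to produce a class $\alpha\in H^{2d}(Y)(d)$ with $\tr_Z(h^*\alpha)\neq 0$.

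I would choose an ample divisor class $\mathfrak h\in\CH^1(Y)$ (possible since $Y$ is projective) and set $\alpha=\cyc(\mathfrak h)^d$, where $\cyc$ is the cycle class map of \Cref{chern}; recall that $\cyc$ is a ring homomorphism and is compatible with pullback and pushforward by \cite[\S2.1]{DM}. Using functoriality of the Gysin pushforward (so that $\tr_Z=\tr_Y\circ h_!$), the projection formula, the identities $\cyc([Z])=1$ and $h_!\cyc([Z])=\cyc(h_*[Z])$, and compatibility of $\cyc$ with intersection products and with the structure map $Y\to\Spec\kappa$, the computation runs
\be
\tr_Z(h^*\alpha)&=&\tr_Y\big(h_!(h^*\alpha)\big)=\tr_Y\big(\alpha\cup h_!\cyc([Z])\big)\\
&=&\tr_Y\big(\cyc(\mathfrak h)^d\cup\cyc(h_*[Z])\big)=\tr_Y\big(\cyc(\mathfrak h^d\cdot h_*[Z])\big)=\deg\big(\mathfrak h^d\cdot h_*[Z]\big).
\ee
To finish, since $h_*[Z]$ is a positive — hence effective and nonzero — $d$-cycle on $Y$ and $\mathfrak h$ is ample, each subvariety $W$ occurring in $h_*[Z]$ contributes the positive top self-intersection number $\int_W(\mathfrak h|_W)^d$ (the restriction of an ample class is ample, cf.\ \cite[\S12]{Fulton}), so $\deg(\mathfrak h^d\cdot h_*[Z])>0$; in particular $\tr_Z(h^*\alpha)\neq 0$, whence $h^*\alpha\neq 0$ and the map $H^{2d}(Y)\to H^{2d}(Z)$ is nonzero.

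The point that needs care is not geometric but formal: ensuring that Poincar\'e duality, the Gysin pushforward $h_!$, the trace maps $\tr_Z$ and $\tr_Y$, and above all the projection formula $h_!(h^*\beta\cup\gamma)=\beta\cup h_!\gamma$ are all genuinely available for the abstract monoidal realization $R\Gamma\colon\DA(\kappa)\to\mcC^{\op}$. This is formal from the setup: by \Cref{poincare} and monoidality of $R\Gamma^{\ct}$, $R\Gamma(Y)$ and $R\Gamma(Z)$ are strongly dualizable $\mathbb{E}_\infty$-algebra objects of $\mcC$, the pullback $h^*$ is a map of such algebras, $h_!$ is (up to a Tate twist and shift) the dual morphism, and the projection formula is precisely the assertion that the dual of an algebra map is a morphism of modules over it; thus every step in the displayed computation is licensed by \Cref{poincare} and \Cref{chern}. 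I would also make explicit that $\cyc([Z])=1\in H^0(Z)$, which follows from compatibility of $\cyc$ with pullback along the structure map $Z\to\Spec\kappa$.
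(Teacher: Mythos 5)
Your proposal is correct and takes essentially the same route as the paper: pick an ample divisor class $\mathfrak h$ on $Y$, pass to the top power $\mathfrak h^d$, use the projection formula to reduce the nonvanishing of the pullback to $\deg(\mathfrak h^d\cdot h_![Z])$, and conclude by the positivity of this intersection number (Fulton, Lemma 12.1). The paper phrases the reduction slightly more economically: from the commutative square of \Cref{chern} and the bijectivity of the trace map (\Cref{dimax}), it suffices to show $\deg\circ h^!$ is nonzero on $\CH^d(Y)$, and then everything happens inside Chow groups via \cite[Proposition 8.3(c)]{Fulton}. You instead carry out the same computation at the level of cohomology classes, justifying the projection formula and Gysin pushforward abstractly from strong dualizability and monoidality; this is a legitimate (if slightly heavier) way to license the same manipulations. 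The aside about $\dim Y\ge d$ is harmless but unnecessary, since positivity of $h_![Z]$ already forces it and it plays no further role.
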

\begin{proof}
By \Cref{chern}, we have the following commutative diagram:
\bx{
\CH^d(Y)\ar[r]^\cyc\ar[d]^{h^!} &H^{2d}(Y)\ar[d]
\\ \CH^d(Z)\ar[r]^\cyc\ar[d]^{\deg} & H^{2d}(Z)\ar[d]^{\tr}
\\ \Z \ar[r] &\Lambda.
}\ex
Since the trace map is bijective (\Cref{dimax}), it suffices to show that the composite $\deg\circ h^!$ is nonzero. 
Let $\mc L$ be an ample line bundle on $Y$. 
Then, by \cite[Proposition 8.3.(c)]{Fulton}, we have $\deg h^!c_1(\mc L)^d=\deg (c_1(\mc L)^d\cap h_![Z])$. The latter is nonzero by \cite[Lemma 12.1]{Fulton}. 
\end{proof}

\begin{rmk}\label{nonvanishing2}
Note that if $h\colon Z\to Y$ is proper with a scheme theoretic image $Z'$ of dimension $d=\dim Z$, then $h_![Z]$ is $\deg(\kappa(Z)/\kappa(Z'))\cdot [Z']$, which is a positive cycle class.
\end{rmk}

\begin{rmk}[Dimension axiom]
	\label{dimax}{Suppose now $\kappa$ is the completion of an algebraic closure of a discrete valuation field of residue characteristic $p$} %
	and assume that the trace map induces a bijection  {$H^{2d}(X)(d)\cong\Lambda$} whenever $X$ is a proper smooth variety of semistable reduction of dimension $d$. This is the case for  Hyodo--Kato cohomology theories $R\Gamma$, $R\Gamma^\flat$  {by \Cref{GKiscrys}\eqref{crys} and \Cref{cor:HKtilt}}. 
	
	For a variety $f\colon Z\to \Spec \kappa$, let $H_{c}^*(Z)$ be the cohomology groups with compact support attached to $Z$ i.e. the cohomology groups of $R\Gamma(f_*f^!1)$. By the localization triangle, alterations and induction on the dimension, %
	 one deduces that the trace map $H_c^{2d}(Z)(d)\to \Lambda$ is bijective, whenever $Z$ is a geometrically connected variety of dimension $d$.%
\end{rmk}

\subsection{Arithmetic Hyodo--Kato cohomology via motives}\label{sec:motives+}
The reader who is only interested in the proof of the main theorem can safely skip this section, which is devoted to a definition of an \emph{arithmetic} motivic Hyodo--Kato realization in {equi-characteristic} and mixed characteristic, and their comparison. To this aim, we first recall some properties of the categories of rigid analytic motives \emph{with good reduction}. 
\begin{dfn}
	We let $\RigDA_{\gr}(K)$ be the full subcategory of $\RigDA(K)$ generated, under colimits, shifts and twists, by 	motives ``of good reduction'', i.e. those attached to varieties $\mfX_\eta$   where $\mfX$ is a smooth formal scheme over $\mcO_K$.  
\end{dfn}

		Despite the name, the category generated by motives of good reduction  may contain motives of varieties which do not have good reduction: for instance, it contains all varieties with semi-stable reduction (see \Cref{ssisgr}). As a matter of fact, if $K$ is algebraically closed, then $\RigDA_{\mathrm gr}(K)\cong\RigDA(K)$. This is part of the following proposition, in which we sum up some results of \cite{AGV}.

		\begin{prop}\label{prop:RigDAgr1}
			Let $L$ be  the completion of an algebraic extension of $K$  with residue field $k_L$ and let $C$ be the completion of an algebraic closure of $K$. 
			\begin{enumerate}
				\item We have $ \RigDA_{\gr}(L)\cong\varinjlim\RigDA_{\gr}(K')$ as $K'/K$ varies among finite extensions inside $L$.
				\item If $L/K$ is totally ramified, then the pullback induces a canonical equivalence $\RigDA_{\gr}(K)\cong\RigDA_{\gr}(L)$. In particular we have  $\RigDA_{\gr}(L)\cong\RigDA_{\gr}(L_0)$ with $L_0=W(k_L)[1/p]$ being the  completion of the maximal unramified extension of $K_0$ inside $L$. 
				\item The functor $K\mapsto \RigDA_{\gr}(K)$ has descent with respect to the \'etale site of unramified field  extensions and \be\RigDA_{\gr}(W(\bar{k})[1/p])\cong \varinjlim\RigDA_{\gr}(W(k')[1/p])\cong \RigDA_{\gr}(C)=\RigDA(C).\ee%
				\item Assume $L$ is perfectoid. Then the following diagram commutes.
				\be
				\begin{tikzcd}
					\RigDA_{\gr}(L)\arrow[r] \arrow[d , no head, "\sim"]& \RigDA(L)\arrow[d , no head, "\sim"]\\
					\RigDA_{\gr}(L^\flat)\arrow[r] & \RigDA(L^\flat)
				\end{tikzcd}
				\ee
			\end{enumerate}
		\end{prop}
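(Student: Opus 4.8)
The plan is to prove the four parts of \Cref{prop:RigDAgr1} by reducing each to a known result from \cite{AGV}, exploiting the continuity and descent properties of the functor $K\mapsto\RigDA_{\gr}(K)$. First I would set up the key input: by \cite[Theorem 2.8.15]{AGV} the functor $\RigDA(-)$ is continuous for cofiltered limits of rigid analytic spaces with affine transition maps, and in particular $\RigDA(L)\cong\varinjlim_{K'}\RigDA(K')$ over finite subextensions $K'/K$ inside $L$. The subtlety is that $\RigDA_{\gr}$ is defined as a \emph{generated} subcategory, so I must check that this generation is compatible with the colimit; this follows because the generators (motives of smooth formal models) are preserved by the pullback functors $\RigDA_{\gr}(K')\to\RigDA_{\gr}(L)$, and a compact generator of the colimit is, by \cite[Notation 2.5.5]{AGV}, the image of a compact generator at some finite stage. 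This gives part (1).

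For part (2), the essential point is that if $L/K$ is totally ramified then $\mcO_L$ and $\mcO_K$ have the same residue field, and the pullback $\RigDA_{\gr}(K)\to\RigDA_{\gr}(L)$ is an equivalence. I would deduce this from the description of $\RigDA_{\gr}$ in terms of formal/algebraic motives over the residue field: by the Monsky--Washnitzer-type results recalled earlier (and \cite[Theorem 2.8.15, Corollary 2.8.?]{AGV}), $\RigDA_{\gr}(K)$ depends only on $k$, so totally ramified extensions do not change it. Concretely, one uses that a smooth formal $\mcO_K$-scheme base-changes to a smooth formal $\mcO_L$-scheme and that this operation is essentially surjective and fully faithful on motives after $\et$-localization — this is where the hypothesis ``good reduction'' (as opposed to arbitrary reduction) is used crucially, since only then does the category descend to the special fiber. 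Combining with part (1) applied to $L_0=W(k_L)[1/p]\hookrightarrow L$, which is totally ramified, gives the stated $\RigDA_{\gr}(L)\cong\RigDA_{\gr}(L_0)$.

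Part (3) is \'etale (unramified) descent: I would invoke \cite[Theorem 2.8.15]{AGV} or the analogue of \Cref{descent}/\Cref{adm} for rigid motives to see that $K'\mapsto\RigDA_{\gr}(K')$ satisfies descent along finite unramified extensions, hence $\RigDA_{\gr}(W(\bar k)[1/p])\cong\varinjlim_{k'/k}\RigDA_{\gr}(W(k')[1/p])$. The identification with $\RigDA_{\gr}(C)$ follows from part (2) (since $C/W(\bar k)[1/p]\,\widehat{}$ is totally ramified, or rather $C$ is the completion of a totally ramified extension of the completion of $W(\bar k)[1/p]$), and the equality $\RigDA_{\gr}(C)=\RigDA(C)$ is exactly \cite[Theorem 2.8.15]{AGV} together with the fact that over an algebraically closed field every smooth rigid variety is \'etale-locally of good reduction (one can also cite \Cref{prop:sst} and \Cref{ex:cont}). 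For part (4), the commutative square is obtained by combining the tilting equivalence $\RigDA(L)\cong\RigDA(L^\flat)$ of \cite{vezz-fw} with the compatibility encoded in \Cref{prop:comm0}: the good-reduction subcategories on both sides correspond, under the equivalences with $\logDA$ over the (common) residue field, to the subcategory generated by strict — i.e.\ classically smooth — formal models, and \Cref{prop:comm0} precisely says the Monsky--Washnitzer functors intertwine with tilting. The main obstacle I anticipate is part (2): verifying that the pullback along a totally ramified extension is an equivalence on $\RigDA_{\gr}$ requires carefully matching the generating sets and knowing that no new motives appear — this is really a statement about invariance of the good-reduction category under nilpotent/totally-ramified thickenings, and the cleanest route is to push everything to the special fiber via \Cref{FDA=DA} and \Cref{iotaiseq} rather than argue directly on the generic fiber.
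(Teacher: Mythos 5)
Your overall strategy lines up with the paper in parts (1), (3), and (4) — the paper indeed appeals to the continuity/descent machinery of \cite{AGV} (it cites \cite[Theorem 3.5.3]{AGV} for (1) and \cite[Propositions 3.5.1, 3.7.17]{AGV} for (3)) and to \cite{vezz-tilt4rig} for (4), and your reasoning there, while citing slightly different results, would go through. The structural input you both use is the module description $\RigDA_{\gr}(K)\cong \Mod_{\chi 1}(\DA(k))$, where $\chi$ is right adjoint to the Monsky--Washnitzer functor $\xi\colon\DA(k)\to\RigDA(K)$.

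The genuine gap is in part (2). You assert that ``$\RigDA_{\gr}(K)$ depends only on $k$, so totally ramified extensions do not change it,'' but this is precisely what needs to be proved: the algebra $\chi 1$ a priori depends on $K$, not just on $k$, since $\chi$ is the right adjoint to a functor landing in $\RigDA(K)$. Your concrete justification — that base change of smooth formal $\mcO_K$-schemes to $\mcO_L$ is ``essentially surjective ... on motives after \'etale-localization'' — is not correct as stated: there is no reason a smooth formal $\mcO_L$-scheme should arise, even \'etale-locally, as the base change of a smooth formal $\mcO_K$-scheme when $L/K$ is ramified. What the paper actually does is compute the map of algebras $\chi_0 1\to\chi 1$ in $\DA(k_L)$ explicitly: after fixing a uniformizer one has $\chi 1\cong 1\oplus 1(-1)[-1]\cong \chi_0 1$, and under this identification the map is $\left(\begin{smallmatrix}1&0\\0&e_L\end{smallmatrix}\right)$ with $e_L$ the ramification index (by \cite[1.4]{AyoAbridged}, \cite[3.4.14--3.5.12]{ayoub-th2}). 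This is invertible \emph{because} the coefficients are $\Q$. Your sketch never mentions the ramification index or the role of $\Q$-coefficients, and this is exactly where the content of (2) lies; pushing to the special fiber via \Cref{FDA=DA} or \Cref{iotaiseq} does not bypass this computation, since those results compare $\logFDA$ to $\logDA$, not the two good-reduction subcategories for different ramifications.
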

		\begin{proof}By means of \cite[Theorem 3.3.3(1)]{AGV}, the category $\RigDA_{\gr}(K)$ is canonically equivalent to the category of $\chi1$-modules in $\DA(k)$ with $\chi$ being the right adjoint to the Monsky--Washnitzer functor $\xi\colon\DA(k)\to\RigDA(K)$ (see for example \cite[Notation 3.1.12]{AGV}). We then deduce property (1) from \cite[Theorem 3.5.3]{AGV}, and property (3) from \cite[Propositions 3.5.1 and 3.7.17]{AGV}. Property (4) is already shown in \cite{vezz-tilt4rig}. We are left to show (2). In light of (1) we may assume $L$ to be a finite extension of $L_0$. 
			The base change functor $\Mod_{\chi_01}(\DA(k_L))=\RigDA_{\gr}(L_0)\to \RigDA_{\gr}(L)=\Mod_{\chi1}(\DA(k_L))$ corresponds to a base change along a map of algebras $\chi_01\to\chi1$. We remind that a choice of a uniformizer gives rise to an equivalence $\chi1\cong1\oplus1(-1)[-1]\cong\chi_01$ (see \cite[Remark 3.8.2]{AGV}). Under these identifications, the map of algebras $\chi_01\to\chi1$ corresponds to the  map $\left(\begin{smallmatrix}
				1&0\\0&e_L\end{smallmatrix}\right)$ with $e_L\neq0$ as computed in  \cite[1.4]{AyoAbridged},  \cite[3.4.14-3.5.12]{ayoub-th2} and is therefore invertible  (we use that our coefficient ring is $\Q$). 
		\end{proof}

		\begin{rmk}\label{rmk:UDA}%
		Recall that the cohomological motive $q_*1$ of $q\colon \G_m\to k$ is canonically equivalent to $1\oplus 1(-1)[-1]$ (see e.g., \cite[Scholie 1.4.2, Subsection 1.5.3, and Th\'eor\`eme 2.3.75]{ayoub-th1}). 
			In the previous proof, we remarked in particular that a choice of a uniformizer determines an equivalence $\chi 1\cong q_*1$, and that the category $\RigDA_{\gr}(K)$ is equivalent to the category $\Mod_{\chi1}(\DA(k))	\cong\Mod_{q_*1}(\DA(k))$. This implies that, after the choice of a uniformizer, 	$\RigDA_{\gr}(K)$  can be alternatively thought as the subcategory $\UDA(k)$ of $\DA(\G_{m,k})$, generated under colimits by motives coming from $k$ (see \cite[Corollaries 15.12 and 15.14]{spitzweck}).
		\end{rmk}

	\begin{rmk}\label{K_0 str}
		The equivalence \be\RigDA(C)^{\ct}\cong%
		 \varinjlim\RigDA_{\gr}(W(k')[1/p])^{\ct}\ee already shows that the (overconvergent) de Rham cohomology of a rigid analytic variety comes equipped with a $K_0^{\unr}$-structure, and the equivalence $\RigDA(C)\cong\Mod_{\chi1}(\DA(k))$ shows that it also has a $\varphi$-structure.  Also the monodromy can be build formally from \Cref{prop:RigDAgr1}, see \Cref{sec:bond}.%
	\end{rmk}
		\begin{rmk}\label{eK1}
			The coefficient $e_K$ appearing in the proof above is the ramification index of $K$. We shall see (in \Cref{eK2}) that this is compatible with the presence of the normalization factor on the monodromy.%
		\end{rmk}

		We now show that varieties of semistable reduction lie in $\RigDA_{\gr}(K)$. We start by recalling the following definition from \cite{berk-contr}.

		\begin{dfn}\label{def:ss}
			Let $L$ be a complete non-archimedean valued field with pseudo-uniformizer $\varpi$. A formal scheme $\mfX$ over $\Spf\mcO_L$ is  
			\emph{pluri-nodal} if Zariski-locally it can be written as part of a sequence \be \mfX=\mfX_d\stackrel{f_{d-1}}{\to}\mfX_{d-1}\stackrel{f_{d-2}}{\to}\cdots\to\mfX_1\stackbin{f_0}{\to}\Spf \mcO_L\ee in which each transition map $f_i$ is, \'etale locally on source and target, given by a composition $\Spf A_{i+1}=\mfX_{i+i}\stackrel{e_i}{\to}\Spf A_i\langle u,v\rangle/(uv-a_i)\to \Spf A_i$ with $e_i$ \'etale and $a_i$ invertible in $A_i[\varpi^{-1}]$. The  category  they form will be denoted $\FSch^{\pn}/\mcO_L$. 
		\end{dfn}
		\begin{rmk}
			We remind the reader that semistable formal schemes are pluri-nodal. The same is true for \emph{polystable} formal schemes (in the sense of  \cite{berk-contr}).
		\end{rmk}
		The following fact appears in \cite{ayoub-rig}.
		\begin{prop}\label{ssisgr}Let $L$ be a complete non-archimedean valued field and		let $\mfX$ be in $\FSch^{\pn}/\mcO_L$. Then the motive $M(\mfX_\eta)$  lies in $\RigDA_{\gr}(L)$.
		\end{prop}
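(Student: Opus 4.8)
The plan is to reduce, using the local structure of pluri-nodal formal schemes (\Cref{def:ss}) and the descent properties of $\RigDA_{\gr}$ recalled in \Cref{prop:RigDAgr1}, to a single relative computation, and then to show that a relative closed annulus has good-reduction \emph{motive} --- even though it need not have good reduction as a rigid space, which is the crux.

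I would induct on the length $d$ of a pluri-nodal tower $\mfX = \mfX_d \to \mfX_{d-1} \to \dots \to \mfX_0 = \Spf\mcO_L$; the case $d = 0$ is immediate, since $(\Spf\mcO_L)^\rig$ is a point and $\Q \in \RigDA_{\gr}(L)$. For the inductive step I would use that $\RigDA_{\gr}(L)$ is closed under colimits, shifts and twists by definition, has strict-\'etale descent, and is stable under passing to \'etale-local models --- an \'etale morphism of smooth rigid spaces being, locally on source and target, the generic fibre of an \'etale morphism of formal schemes, and pluri-nodality of length $d-1$ being preserved under \'etale localisation. Localising $\mfX_{d-1}$ and then $\mfX_d$ accordingly, the inductive step comes down to the following: if $S := (\Spf A)^\rig$ is the generic fibre of an affine formal $\mcO_L$-scheme with $M(S) \in \RigDA_{\gr}(L)$ and $a \in A$ is invertible on $S$, then the motive of the relative annulus $\mcX := (\Spf A\langle u, v\rangle/(uv - a))^\rig$ lies in $\RigDA_{\gr}(L)$.

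To prove this --- the only genuinely non-formal point --- I would observe that inside the relative unit disc $\B^1_S = \{|u| \le 1\}$ the subdomains $U_1 := \{|u| \ge |a|\}$ and $U_2 := \{|u| \le |a|\}$ form an admissible cover, with $U_1 = \mcX$ (put $v = a/u$), while the substitution $u = a u'$ --- legitimate since $a$ is invertible on $S$ --- identifies $U_2$ with $\B^1_S$ and $U_1 \cap U_2 = \{|u| = |a|\}$ with the relative rigid torus $(\Spf A\langle u'^{\pm 1}\rangle)^\rig \cong S \times_L (\Spf\mcO_L\langle u^{\pm 1}\rangle)^\rig$. The associated Mayer--Vietoris triangle
\[
M\bigl(S \times_L (\Spf\mcO_L\langle u^{\pm 1}\rangle)^\rig\bigr) \longrightarrow M(\mcX) \oplus M(\B^1_S) \longrightarrow M(\B^1_S) \xrightarrow{+1}
\]
has its right-hand map given, on the $M(\B^1_S)$-summand, by the inclusion $U_2 \hookrightarrow \B^1_S$ of the smaller into the larger relative disc; since both are $\B^1$-contractible over $S$ compatibly with their zero-sections this map is an equivalence, and cancelling it yields $M(\mcX) \cong M(S) \otimes M\bigl((\Spf\mcO_L\langle u^{\pm 1}\rangle)^\rig\bigr)$. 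The second tensor factor is of good reduction, and $\RigDA_{\gr}(L)$ is stable under tensor product (it is generated under colimits, shifts and twists by motives of generic fibres of smooth formal $\mcO_L$-schemes, and these are closed under fibre products), so $M(\mcX) \in \RigDA_{\gr}(L)$, which closes the induction.

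The obstacle I anticipate is exactly this annulus computation: the na\"ive model $\Spf A\langle u, v\rangle/(uv - a)$ of $\mcX$ is nodal rather than smooth, so $M(\mcX) \in \RigDA_{\gr}(L)$ is a motivic and not a geometric statement; it is the Mayer--Vietoris cover above, which trades the annulus for a torus at the cost of a $\B^1$-contractible disc, that resolves it --- in the same spirit as the computation of the cohomological motive of $\G_m$ that underlies the description of $\RigDA_{\gr}$ in \Cref{prop:RigDAgr1}.
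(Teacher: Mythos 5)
Your argument is correct and follows the same strategy as the paper. The paper inducts on the number of non-smooth steps of a pluri-nodal tower, reduces by \'etale localisation and excision to the pure nodal case $B = A\langle u,v\rangle/(uv-a)$, and then considers the Mayer--Vietoris triangle for the two charts of (the generic fibre of) the admissible formal blow-up of $\Spf A\langle u\rangle$ along the ideal $(u,a)$; those two charts are $(\Spf A\langle u,v\rangle/(uv-a))^\rig$ and $(\Spf A\langle t\rangle)^\rig$ with $u=at$, which are exactly your annulus $U_1=\{|a|\le|u|\le 1\}$ and small disc $U_2=\{|u|\le|a|\}$, so the cover is the same. Where you differ is in reading off the conclusion from the triangle: you observe that the inclusion of the small disc into $\B^1_S$ is an equivalence and cancel it, which gives the sharper identification $M(\mcX)\cong M(S)\otimes M(\T^1)$, whereas the paper simply concludes that $M((\Spf B)_\eta)$ lies in $\RigDA_\gr(L)$ because the other three vertices of the triangle do and $\RigDA_\gr(L)$ is a stable subcategory closed under colimits (hence extensions and retracts). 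The one place where your write-up is less precise than the paper's is the removal of the auxiliary \'etale map $e_i$ from \Cref{def:ss}: the paper does this via an excision-plus-induction argument borrowed from Step 3 of the proof of \cite[Theorem 2.5.34]{ayoub-rig}, while your appeal to strict-\'etale descent together with ``\'etale morphisms being locally generic fibres of \'etale morphisms of formal schemes'' gestures at the right idea but does not explain why the pieces of the cover and their iterated fibre products remain pluri-nodal of controlled complexity; that step would benefit from either citing that reference or spelling out the excision.
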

		\begin{proof}
			It suffices to follow  the Steps 3-4 of the proof of \cite[Theorem 2.5.34]{ayoub-rig} or Step 4 of the proof of \cite[Proposition 3.7.17]{AGV} that we briefly recall. 	We fix a sequence%
			\be \mfX=\mfX_d\stackrel{f_{d-1}}{\to}\mfX_{d-1}\stackrel{f_{d-2}}{\to}\cdots\to\mfX_1\stackbin{f_0}{\to}\Spf \mcO_L\ee as in the definition. %
			We may show that $\mfX_\eta$ lies in $\RigDA_{\gr}(L)$ by induction on the cardinality of the set  of indices $i$  for which $f_i$ is not smooth, knowing that in case it is empty, then $\mfX_\eta$ lies in $\RigDA_{\gr}(K)$ by definition. We now assume this set is not empty. By descent, we may work locally on $\mfX$ and hence assume that whenever $f_i$ is smooth, then $a_i=1$. This shows in particular that $\mfX$ is \'etale over $\mfX_i\times\G_m^k$ with $i$ being the largest index for which $\mfX\to\mfX_i$ is smooth. As $M(\mfX_{i\eta}\times\T^k)\cong M(\mfX_{i\eta})(k)[k]$ it suffices to show that $M(\mfX_{i\eta})$ lies in $\RigDA_{\gr}(L)$. In other words,  we may and do assume that $f_d$ is not smooth. By induction, we may then prove that if $(\Spf A)_{\eta}$ lies in $\RigDA_{\gr}(L)$ then also $(\Spf B)_{\eta}$ does if $\Spf B$ \'etale over $\Spf A\langle u,v\rangle/(uv-a)$. By excision and our induction hypothesis (see the last part of Step 3 in  \cite[Theorem 2.5.34]{ayoub-rig} where we remark that the ``induction on the dimension" is not used)  we may and do assume that $B=A\langle u,v\rangle/(uv-a)$. In this case, note that $M((\Spf A)_\eta)=M((\Spf A)_\eta\times\B^1)=M((\Spf A\langle u\rangle)_\eta)=M(\mfY_\eta)$ with $\mfY$ being the admissible formal blow up of $\Spf A\langle u\rangle$ on the ideal $(u,a)$. We note that $M(\mfY_\eta)\in\RigDA_{\gr}(L)$  sits inside a Mayer-Vietoris triangle with $M((\Spf B)_\eta)$, $M((\Spf A)_\eta\times\B^1)=M((\Spf A)_\eta)\in\RigDA_{\gr}(L)$ and $M((\Spf A)_\eta\times\T^1)=M((\Spf A)_\eta)(1)[1]\in\RigDA_{\gr}(L)$ proving the claim.
		\end{proof}

		\begin{prop}%
					Let $L$ be a finite unramified extension of $K_0$ with residue $k_L$.   
					There is a commutative diagram of motivic realization functors, compatible with field extensions:
					\be\begin{tikzcd}
						\logFDA^{\sst}(\mcO^\times_{L})\arrow[r, "\sim"] \arrow[d, "\xi"]& 	\logDA^{\sst}(k_L^{0})  \arrow[d, "R\Gamma_{HK}"] \\
						\RigDA_{\gr}(L)  \arrow[r, "R\Gamma_{\HK}^{}" ] & \mcD_{\varphi,N}(L_0)^{\op}
					\end{tikzcd}
					\ee	
					where the functor $R\Gamma_{\HK}$ on the right is induced by \Cref{dfn:HK} and the one below computes the arithmetic  overconvergent Hyodo--Kato cohomology of \cite[\S5.2.2]{CN19}. %
			\end{prop}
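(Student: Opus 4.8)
The plan is to assemble the square out of pieces that are for the most part already available, proceeding as in \Cref{diagram of realizations} and \Cref{CN0} but now over the local field $L$; note that, $L$ being a finite \emph{unramified} extension of $K_0$, we have $L=L_0=W(k_L)[1/p]$. First I would observe that the top horizontal arrow is the semistable instance of the equivalence of \Cref{iotaiseq}, and that the left vertical arrow is the log Monsky--Washnitzer functor $\xi_{\mcO_L^\times}$: since it carries $\mfX\in\Sm^{\sst}/\mcO_L^\times$ to the rigid generic fiber of a (pluri-nodal) semistable formal scheme, \Cref{ssisgr} guarantees that it lands in $\RigDA_{\gr}(L)$. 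For the right vertical arrow I would check, just as in the remark following \Cref{GK2}, that the assignment $\mfX_\sigma\mapsto R\Gamma_{\HK}(\mfX_\sigma/W(k_L))$ from \Cref{dfn:HK} has $\A^1$-invariance, strict-\'etale descent and Tate stability --- the first two via the Hyodo--Kato comparison map \cite[Theorem 0.1]{GK05}, the last via $H^1_{\HK}(\G_{m})\cong L_0(1)$ --- so that it descends to a realization $\logDA^{\sst}(k_L^0)\to\mcD_{\varphi,N}(L_0)^{\op}$.

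Next I would produce the bottom arrow by restricting the arithmetic overconvergent Hyodo--Kato cohomology $R\Gamma_{\HK}^{\CN}$ of \cite[\S5.2.2]{CN19} along $\RigDA_{\gr}(L)\hookrightarrow\RigDA(L)$. Here one has to see that $R\Gamma_{\HK}^{\CN}$, viewed as a functor on smooth rigid analytic varieties over $L$, descends to a motivic realization: rig-\'etale descent holds by construction, while $\B^1$-invariance and Tate stability are obtained exactly as in the proof of \Cref{CN0}, by comparing with the overconvergent de Rham complex (which is $\B^1$-invariant and has \'etale descent by \cite[Proposition 5.12]{vezz-MW}) after the scalar extension along $L_0=L$ --- these being properties that may be tested after the conservative, colimit-preserving forgetful functor $\mcD_{\varphi,N}(L_0)\to\mcD(L_0)$. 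The universal property of motives \cite[Remark 2.8]{LBV} then gives a realization on $\RigDA(L)$ with values in $\mcD_{\varphi,N}(L_0)^{\op}$, whose restriction to $\RigDA_{\gr}(L)$ is the desired bottom arrow; by construction it still computes $R\Gamma_{\HK}^{\CN}$ on motives of good-reduction rigid varieties.

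The heart of the argument is the commutativity of the square, and for this I would reduce to a comparison on the set of generators $\Q(\mfX)$ with $\mfX\in\Sm^{\sst}/\mcO_L^\times$. Running such an $\mfX$ around the top and right yields $R\Gamma_{\HK}(\mfX_\sigma/W(k_L))$, the log-rigid Hyodo--Kato cohomology of the special-fiber log scheme, whereas running it around the left and bottom yields $R\Gamma_{\HK}^{\CN}(\underline{\mfX}^{\rig})$, which by the construction of \cite[\S5.2.2]{CN19} is computed through a semistable (weak) formal model of $\underline{\mfX}^{\rig}$ --- namely $\underline{\mfX}$ itself, the passage between adic and weak formal models being harmless thanks to the equivalence $\RigDA^{\dagger}(L)\cong\RigDA(L)$ of \cite[Theorem 4.23]{vezz-MW} --- again as the Hyodo--Kato cohomology of its special fiber. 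What must be checked is that this identification is natural in $\mfX$ and $(\varphi,N)$-equivariant; granting it, the two composite realizations are connected by a natural transformation invertible on generators, hence invertible. Finally, compatibility with field extensions follows by running a finite unramified extension $L\subset L'$ through the functoriality of \Cref{iotaiseq} and of the Monsky--Washnitzer functor, through \Cref{prop:RigDAgr1}(3) for $\RigDA_{\gr}$, and through unramified base change for Hyodo--Kato cohomology (cf.\ the discussion around \Cref{GK2}).

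I expect the only genuine obstacle to be bookkeeping rather than a conceptual difficulty: namely, making sure that the descent and $\B^1$-invariance of $R\Gamma_{\HK}^{\CN}$ are established in $\mcD_{\varphi,N}(L_0)$ and not merely after forgetting the $(\varphi,N)$-action, and that the comparison on semistable models of \cite{CN19} is genuinely $(\varphi,N)$-equivariant and functorial. Both points are, however, already implicit in \cite{CN19} and in the proof of \Cref{CN0}, so no new ideas should be required.
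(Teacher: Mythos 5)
Your argument is correct in substance but takes a genuinely different route than the paper's. The paper's proof is very short: it observes that $\RigDA(C)=\varinjlim\RigDA_{\gr}(W(k')[1/p])$ (from \Cref{prop:RigDAgr1}), so the square over $L$ can be obtained from the geometric one already established in \Cref{CN0} (and \Cref{diagram of realizations}) by unramified Galois descent, appealing to \cite[Proposition 5.13]{CN19} to match the descended bottom arrow with the arithmetic overconvergent Hyodo--Kato cohomology of \cite[\S5.2.2]{CN19}. You instead rebuild the square over $L$ from scratch: you re-verify strict-\'etale descent, $\A^1$/$\B^1$-invariance and Tate stability for both the right and the bottom vertical arrows, and then check commutativity on the generators $\Q(\mfX)$ for $\mfX\in\Sm^{\sst}/\mcO_L^\times$. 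This is more explicit but essentially duplicates the work of \Cref{CN0}; the paper's device of descending from $C$ to $L$ (possible because $L/K_0$ is unramified) lets it reuse that work rather than reprove it.

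For the step you call ``the heart of the argument'' --- the commutativity check --- the natural, $(\varphi,N)$-equivariant identification of $R\Gamma_\HK(\mfX_\sigma/W(k_L))$ with $R\Gamma_\HK^{\CN}(\underline{\mfX}^\rig)$ for semistable $\mfX/\mcO_L$ is the genuine mathematical input, and it is precisely what the paper isolates by citing \cite[Proposition 5.13]{CN19}. You defer to this being ``implicit in \cite{CN19}'' without a precise reference; pinning that citation down would close the remaining gap. With that made explicit, your construction is a valid, if longer, alternative to the paper's descent argument.
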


			\begin{proof}%
						Note that $\RigDA(C)=\varinjlim\RigDA_{\gr}(W(k')[1/p])$ by \Cref{prop:RigDAgr1}. The diagram then follows from \Cref{CN0} by unramified Galois descent and \cite[Proposition 5.13]{CN19}. 
					\end{proof}
					One may generalize the construction above to the case of  local fields of equi-characteristic as follows.
					\begin{dfn}\label{def:HKequi}
						We let $C$ be the completion of an algebraic closure of $k(\!(T)\!)$ and we define:
						\be
						R\Gamma_{\HK}^{{T}}\colon\logFDA^{\mathrm{ss}}(\mcO^\times_{C})\cong \logDA^{\mathrm{ss}}(\bar{k}^0)\stackbin{R\Gamma_{\HK}}{\longrightarrow}  \mcD_{\varphi,N}(K_0^{\unr})^{\op}
						\ee
						where semistable varieties are considered with respect to $k(\!(T)\!)$ [resp. $|T|$].
					\end{dfn}

					By definition, if $\mfX$ is a strictly semistable formal scheme over a finite unramified extension $L$ of $\Spf(k[\![T]\!])$, its Hyodo--Kato cohomology is defined as $R\Gamma_{\HK}(\mfX) := R\Gamma(\mfX_{\sigma}/W(k_L)^0)$ as $(W(k_L)[\frac{1}{p}], \varphi,N)$-module. In particular, it depends only on the log special fiber $\mfX_{\sigma}$ as log scheme over $k_L^{\log}$. Note that this is exactly the same formula used in the mixed characteristic case: the precise relationship is summarized in the following corollary.

					\begin{cor}\label{cor:commC}\begin{enumerate}
							\item
							The realization $R\Gamma_{\HK}^{p^\flat}\colon \log\FDA^{\sst}(\mcO_{C^\flat}^\times)\to  \mcD_{\varphi,N}(K_0^{\unr})^{\op}$ has rig-\'etale descent, and hence factors over a realization
							\be
							R\Gamma_{\HK}^{p^\flat}\colon \RigDA(C^\flat)\to  \mcD_{\varphi,N}(K_0^{\unr})^{\op}.
							\ee
							Moreover, the following diagram commutes:
							\be
							\xymatrix{
								&\logFDA^{\sst}(\mcO^\times_{C})\ar[r]&\RigDA(C)\ar[dl]^{R\Gamma_{\HK}}\\
								\logDA^{\sst}(\bar{k}^{0})\ar@{-}[ur]^{\sim}\ar@{-}[dr]_{\sim}\ar[r]^-{R\Gamma_{\HK}}&  \mcD_{\varphi,N}(K_0^{\unr})^{\op}\\
								&\logFDA^{\sst}(\mcO_{C^\flat}^\times)\ar[r]&\RigDA(C^\flat)\ar@{-}[uu]_{\sim}\ar[lu]_{R\Gamma^{p^\flat}_{\HK}}
							}
							\ee
							\item 
							Let $K$ be $k(\!(p^\flat)\!)$.    
							There is a commutative diagram of motivic realization functors, compatible with field extensions:
							\be
							\xymatrix{
								\logFDA^{\sst}(\mcO_{K}^\times)^{}\ar@{-}[r]^-{\sim}\ar[d]&
								\logDA^{\sst}(k^{0})^{}\ar@{-}[d]^{R\Gamma_{\HK}}\\
								\RigDA_{\gr}(K)^{}\ar[r]^-{R\Gamma_{\HK}^{p^{\flat}}} & \mcD_{\varphi,N}(W(k)[1/p])^{\op}
							}
							\ee
							where the functor $R\Gamma_{\HK}$ is induced by the  Hyodo--Kato cohomology of \Cref{dfn:HK}.%
						\end{enumerate}
					\end{cor}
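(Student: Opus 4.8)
The plan is to deduce both parts formally from results already in hand, keeping the genuinely equal\nobreakdash-characteristic input to a minimum by routing the descent statements through \Cref{cor:locloc}, which transports the rig-\'etale localization across the tilting-compatible equivalences, rather than re-proving the machinery of \cite{CN19} over $k\lcc p^\flat\rcc$.

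For part (1), recall that by construction (\Cref{def:HKequi}) the functor $R\Gamma_{\HK}^{p^\flat}$ on $\logFDA^{\sst}(\mcO^\times_{C^\flat})$ is the composite of the equivalence $\logFDA^{\sst}(\mcO^\times_{C^\flat})\cong\logDA^{\sst}(\bar k^0)$ of \Cref{iotaiseq} with the Gro\ss e--Kl\"onne realization $R\Gamma_{\HK}^{\GK}$. I would first show this composite inverts rig-\'etale local equivalences. By \Cref{CN0}, the same functor $R\Gamma_{\HK}^{\GK}$, viewed through the equivalence $\logDA^{\sst}(\bar k^0)\cong\logFDA^{\sst}(\mcO^\times_{C})$, has rig-\'etale descent; by \Cref{cor:locloc} the rig-\'etale localizations on $\logFDA^{\sst}(\mcO^\times_{C})$ and on $\logFDA^{\sst}(\mcO^\times_{C^\flat})$ coincide under the chain of equivalences through $\logDA^{\sst}(\bar k^0)$, so $R\Gamma_{\HK}^{\GK}$ also inverts rig-\'etale local equivalences on the $C^\flat$-side; hence by \Cref{prop:sst} it factors through $\RigDA(C^\flat)$, producing the asserted $R\Gamma_{\HK}^{p^\flat}\colon\RigDA(C^\flat)\to\mcD_{\varphi,N}(K_0^\unr)^{\op}$. (Alternatively, the proof of \Cref{CN0} — Zariski descent, the comparison with the overconvergent de Rham complex together with its \'etale descent from \cite[Proposition 5.12]{vezz-MW}, $\B^1$-invariance, and the computation $H^1_{\HK}(\G_{m,\bar k^0})\cong K_0^\unr(1)$ — carries over mutatis mutandis to equal characteristic, as was already observed for \Cref{prop:sst}.) For the commutative diagram, the triangle over $C$ is the square of realizations of \Cref{diagram of realizations}; the triangle over $C^\flat$ is the factorization just obtained, which by construction restricts to $R\Gamma_{\HK}^{\GK}$ on $\logDA^{\sst}(\bar k^0)$; and the outer square — relating the Monsky--Washnitzer/generic-fiber functors over $C$ and $C^\flat$ with the motivic tilting equivalence $\RigDA(C)\cong\RigDA(C^\flat)$ and the equivalence $\logFDA^{\sst}(\mcO^\times_C)\cong\logDA^{\sst}(\bar k^0)\cong\logFDA^{\sst}(\mcO^\times_{C^\flat})$ — is precisely \Cref{prop:comm}. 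Pasting these identifications (after precomposing with the localizations $\xi$ so that they can be checked on log formal motives), and transporting the canonical invertible natural transformation of \Cref{prop:comm}, yields the diagram.

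Part (2) is the arithmetic, equal-characteristic counterpart of the mixed-characteristic realization statement recorded just above \Cref{def:HKequi}, and I would prove it the same way. On semistable models over $\mcO_K$ (with $K=k\lcc p^\flat\rcc$, of residue field $k$ and ramification index one), $R\Gamma_{\HK}^{p^\flat}$ is, by the formula of \Cref{def:HKequi}, the Gro\ss e--Kl\"onne/crystalline Hyodo--Kato complex $R\Gamma_{\HK}(\mfX_\sigma/W(k))$ of \Cref{dfn:HK}, which depends only on the log special fiber over $k^0$; exactly as in the remark preceding \Cref{CN0} (with $k^0$ in place of $\bar k^0$) it has strict-\'etale descent, $\A^1$-invariance and Tate stability, hence factors through $\logDA^{\sst}(k^0)\cong\logFDA^{\sst}(\mcO^\times_K)$, which gives the commutativity of the square with the generic-fiber functor $\xi$ on representables (note $\xi$ lands in $\RigDA_{\gr}(K)$ by \Cref{ssisgr}). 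That this functor inverts rig-\'etale local equivalences, and so descends along $\xi$ to $\RigDA_{\gr}(K)$, follows from part (1) after the (conservative, by faithfully flat base change on cohomology) scalar extension $\mcD_{\varphi,N}(W(k)[1/p])\to\mcD_{\varphi,N}(K_0^\unr)$ and the unramified base-change isomorphism for Hyodo--Kato cohomology, together with unramified Galois descent — for which one uses that $\RigDA_{\gr}$ has descent along unramified extensions (\Cref{prop:RigDAgr1}) and the equal-characteristic analogue of \cite[Proposition 5.13]{CN19}; the latter also supplies the compatibility with finite unramified field extensions, and passing to the colimit over these recovers part (1) in view of \Cref{prop:RigDAgr1}.

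The main obstacle is organizational rather than conceptual: since the equal-characteristic form of \cite{CN19} is not available, one must be careful to feed every descent statement through the already-established \Cref{CN0} and \Cref{cor:locloc}, and — in part (2) — to track the coefficient ring correctly, descending the target of $R\Gamma_{\HK}^{p^\flat}$ from $\mcD_{\varphi,N}(K_0^\unr)$ to $\mcD_{\varphi,N}(W(k)[1/p])$ in a way that matches the motivic unramified Galois descent of $\RigDA_{\gr}$ with the corresponding descent of Gro\ss e--Kl\"onne's Hyodo--Kato complex.
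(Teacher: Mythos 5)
Your proposal matches the paper's (very terse) proof in all essentials: part (1) is obtained from \Cref{CN0} and \Cref{cor:locloc}, with the diagram assembled by pasting \Cref{diagram of realizations} with \Cref{prop:comm}; part (2) uses $\RigDA(C^\flat)\cong\RigDA_{\gr}(\bar k\lcc p^\flat\rcc)$ (\Cref{prop:RigDAgr1}) and unramified Galois descent. The only caveat is that \Cref{prop:sst} identifies $\xi$ as a rig-\'etale localization only over $C$, not over a discretely valued $K$, so in part (2) the phrase ``descends along $\xi$ to $\RigDA_{\gr}(K)$'' should not be read literally; the descent should rest entirely on the Galois-descent/colimit route through $\RigDA_{\gr}$ that you already invoke, which is exactly what the paper does.
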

					\begin{proof}
						For the first part, use \Cref{CN0} and \Cref{cor:locloc}. For the second, use once again that $\RigDA(C^\flat)\cong\RigDA_{\gr}(\bar{k}(\!(p^\flat)\!))$ and unramified Galois descent. The last claim is immediate from the definitions.
					\end{proof}
					We can extend the previous definitions to an arbitrary $K$, in accordance with \cite[Remark 4.13]{CN19}.
					\begin{dfn}\label{def:HKL}
						Let $L$ be a finite extension of $K$ [resp. $k(\!(T)\!)$]  with residue $k_L$, and let $L_0$ be the maximal  subfield of $L$ which is unramified over $K_0$ [resp. $k(\!(T)\!)$].  We denote by $R\Gamma_{\HK}$ the following   contravariant realization functor \be\RigDA_{\gr}(L)^{}\cong\RigDA_{\gr}(L_0)^{}\to \mcD_{\varphi,N}(K_0^{\unr})^{\op}\ee where the first equivalence is the one of \Cref{prop:RigDAgr1}. %
						 It is compatible with base change over $L$ by definition. 
					\end{dfn}
					The compatibility with tilting easily descends to the case of finite extensions. 
					\begin{cor}\label{cor:logtilt}\label{cor:commK}  
						Let $L$ be a finite extension of $K$ with residue $k_L$. Let ${L_\infty}$ be the completion of $L(p^{1/p^\infty})$.  The following diagram commutes
						\be
						\xymatrix{	&\logFDA^{\sst}(\mcO^\times_{L_0})^{}
							\ar[r]&\RigDA_{\gr}(L_0)^{}\ar[dl]^{R\Gamma_{\HK}} 
							\ar@{-}[r]^{\sim}&
							\RigDA_{\gr}(L_{\infty})^{}
							\\
							\logDA^{\sst}(k_{L_0}^{0})^{}\ar@{-}[ur]^{\sim}\ar@{-}[dr]_{\sim}\ar[r]^-{R\Gamma_{\HK}}&\mcD_{\varphi,N}(W(k_L)[1/p])^{\op}\\
							&\logFDA^{\sst}(\mcO_{k_L(\!(p^{\flat})\!)}^\times)^{}\ar[r]&\RigDA_{\gr}(k_L(\!(p^{\flat})\!))^{}\ar@{-}[uu]_{\sim}\ar[lu]_{R\Gamma^{p^\flat}_{\HK}}
							\ar@{-}[r]^{\sim}&
							\RigDA_{\gr}(L_{\infty}^\flat)^{}\ar@{-}[uu]_{\sim}
						}
						\ee\qed
					\end{cor}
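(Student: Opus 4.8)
The plan is to deduce this corollary by assembling two facts that have already been proved: the unramified mixed-characteristic comparison (the commutative square displayed just before \Cref{def:HKequi}) and its equi-characteristic tilted counterpart \Cref{cor:commC}(2), which one glues along the single category $\logDA^{\sst}(k_L^0)$; and the base-change equivalences for good-reduction motives collected in \Cref{prop:RigDAgr1}, which are used to propagate the statement along the finite extension $L/K$ and out to the perfectoid fields $L_\infty$ and $L_\infty^\flat$. No genuinely new argument is needed, which is why the statement carries a ``\qed''; the point is only to organize the bookkeeping.

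First I would reduce to the case where $L$ is unramified. Write $L_0$ for the maximal subfield of $L$ unramified over $K_0$ [resp.\ over $k(\!(T)\!)$], so that $L/L_0$ is totally ramified and $k_{L_0}=k_L$. By the very definition of $R\Gamma_\HK$ on $\RigDA_{\gr}(L)$ (resp.\ of $R\Gamma^{p^\flat}_\HK$ on $\RigDA_{\gr}(k_L(\!(p^\flat)\!))$), this functor factors through the equivalence $\RigDA_{\gr}(L)\cong\RigDA_{\gr}(L_0)$ supplied by parts (1) and (2) of \Cref{prop:RigDAgr1}. Since $L_\infty/L_0$ and $L_\infty^\flat/k_L(\!(p^\flat)\!)$ are again (completed) totally ramified extensions with residue field $k_L$, the same proposition provides canonical equivalences $\RigDA_{\gr}(L_0)\cong\RigDA_{\gr}(L_\infty)$ and $\RigDA_{\gr}(k_L(\!(p^\flat)\!))\cong\RigDA_{\gr}(L_\infty^\flat)$, while \Cref{prop:RigDAgr1}(4) shows the motivic tilting equivalence carries the first onto the second. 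So I would record these compatibilities -- which are routine once one uses that, under the identification $\RigDA_{\gr}(-)\cong\Mod_{\chi1}(\DA(k_L))$ of the proof of \Cref{prop:RigDAgr1}(2) and \Cref{rmk:UDA}, all four equivalences in play are induced purely by functoriality in the residue field -- and thereby reduce to proving commutativity of the diagram with $L$ replaced by $L_0$, i.e.\ to the case in which $L$ is itself unramified.

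In that case the diagram splits as the upper triangle $\logFDA^{\sst}(\mcO^\times_{L_0})\to\RigDA_{\gr}(L_0)\xrightarrow{R\Gamma_\HK}\mcD_{\varphi,N}(W(k_L)[1/p])^{\op}$ versus $\logFDA^{\sst}(\mcO^\times_{L_0})\cong\logDA^{\sst}(k_L^0)\xrightarrow{R\Gamma_\HK}\mcD$, which is the unramified mixed-characteristic square recalled above (``compatible with field extensions''), and the lower triangle, which is precisely \Cref{cor:commC}(2) with the base field $k(\!(p^\flat)\!)$ enlarged to $k_L(\!(p^\flat)\!)$; the two occurrences of $R\Gamma_\HK$ out of $\logDA^{\sst}(k_L^0)$ coincide tautologically, being the classical Hyodo--Kato complex of \Cref{GK0}. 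To promote \Cref{cor:commC}(2) from residue field $k$ to $k_L$ I would invoke unramified Galois descent: by \Cref{prop:RigDAgr1}(3) one has $\RigDA_{\gr}(W(\bar k)[1/p])\cong\varinjlim_{k'}\RigDA_{\gr}(W(k')[1/p])$, the analogous colimit presentations hold on the log side (by \Cref{iotaiseq} and \Cref{cor:cont}) and on the tilted side, and every functor occurring in \Cref{cor:commC}(2) is compatible with field extensions; extracting the term $k'=k_L$ -- equivalently, descending from $\bar k$ along $\Gal(\bar k/k_L)$ -- produces the required diagram over $k_L^0$.

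The only point that needs a little care -- and the one I would flag as the main, still mild, obstacle -- is the residue-field bookkeeping: one must check that $L_0$, $L_\infty$, $k_L(\!(p^\flat)\!)$ and $L_\infty^\flat$ all have residue field $k_L$, so that the two columns of the diagram are genuinely glued along a single copy of $\logDA^{\sst}(k_L^0)$, and that $L_\infty^\flat$ is the perfectoid tilt to which \Cref{prop:RigDAgr1}(4) applies when fed $L_\infty$ (concretely, the tilt of the completed $p^{1/p^\infty}$-tower over $L_0$ is a completed totally ramified extension of $k_L(\!(p^\flat)\!)$). These are standard perfectoid facts; granting them, the remainder is a formal diagram chase assembling \Cref{cor:commC}(2), \Cref{prop:RigDAgr1}, and the tilting-compatibilities already established in \Cref{prop:comm0} and \Cref{cor:commC}(1).
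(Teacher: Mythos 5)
Your proof is correct and follows essentially the same route as the paper's own (very terse) argument: the left-hand triangles are handled by the unramified mixed-characteristic comparison and \Cref{cor:commC}(2), and the remaining tilting compatibility (what the paper calls the ``triangle in the middle'') is handled via \Cref{cor:commC}(1) and Galois-invariance of the good-reduction tilting equivalence, just as you do. Your explicit preliminary ``reduction to $L$ unramified'' is slightly redundant -- the statement already involves only $L_0$, not $L$ -- but it does no harm, and the residue-field bookkeeping and use of \Cref{prop:RigDAgr1} to identify the relevant categories are exactly the points the paper also relies on.
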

					\begin{proof}
						In light of  \Cref{prop:comm}, \Cref{CN0}(2) and  \Cref{cor:commC}(2), only the triangle in the middle must be shown to be commutative. This follows from the Galois-invariance of the tilting equivalence $\RigDA_{\gr}(K_{\infty})\cong \RigDA_{\gr}(K^\flat_{\infty})$ (see e.g. \cite[Theorem 5.13]{LBV}), and   \Cref{cor:commC}(1).
					\end{proof}
					We also note the realizations above can be enriched with a $G_L$-structure, and extented to the whole of $\RigDA(L)$, as in the classical Hyodo--Kato setting.
					\begin{cor}
						The functor $R\Gamma_{\HK}$ can be extended to a functor
						\be
						\RigDA(L)^{\ct}\to \mcD^\flat_{\varphi,N,G_L}(K_0^{\unr})^{\op}
						\ee
						where  the category on the right is the derived DG-category of $(\varphi,N,G_L)$-modules \cite[Section 2.6]{DN}. 
					\end{cor}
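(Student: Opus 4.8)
The plan is to obtain both the extension and the $G_L$-equivariant structure by Galois descent from the geometric Hyodo--Kato realization over $C$, the completion of an algebraic closure $\bar L$ of $L$. Over $C$ one has $\RigDA(C)=\RigDA_{\gr}(C)$ by \Cref{prop:RigDAgr1}(3), so the functor $R\Gamma_{\HK}\colon\RigDA(C)^{\ct}\to\mcD_{\varphi,N}(K_0^{\unr})^{\op}$ produced by \Cref{CN0} and \Cref{prop:sst} is defined on \emph{all} of $\RigDA(C)^{\ct}$. On the other hand, \'etale descent for rigid motives along finite Galois covers, together with continuity \cite[Theorem 2.8.15]{AGV}, shows that the base-change functor $a^*\colon\RigDA(L)^{\ct}\to\RigDA(C)^{\ct}$ factors canonically, and functorially in the motive, through the category of compact objects of $\RigDA(C)$ carrying a continuous semilinear $G_L$-descent datum; on the generators $\Q_L(X^{\an})$ with $X/L$ smooth and proper \cite[Th\'eor\`eme 2.5.35]{ayoub-rig} this datum is simply the tautological action of $G_L=\Aut(C/L)$ on $\Q_C(X_C^{\an})$ coming from its action on $X_C=X\times_LC$.

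Next I would use that the construction of $R\Gamma_{\HK}$ over $C$ in \Cref{sec:HK} is natural in the field $C$: it is assembled functorially from $\RigDA(C)$, the category $\logDA^{\sst}(\bar k^0)$ of semistable log motives over the log point, the motivic tilting equivalence, and the passage over the Fargues--Fontaine curve, all of which carry a compatible action of $\Aut(C/L)=G_L$ --- the latter acting on $\bar k^0$ and on the coefficient field $K_0^{\unr}$ through the canonical quotient $G_L\to G_{k_L}$. Hence $R\Gamma_{\HK}$ intertwines the $G_L$-action on $\RigDA(C)$ with a $K_0^{\unr}$-semilinear $G_L$-action on its target commuting with $\varphi$ and $N$, and applying it to $a^*M$ yields, functorially in $M\in\RigDA(L)^{\ct}$, a complex in $\mcD_{\varphi,N}(K_0^{\unr})$ with a semilinear $G_L$-action --- an object of the derived DG-category of $(\varphi,N,G_L)$-modules over $K_0^{\unr}$. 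Since $M$ is compact this complex is perfect (reduce to $X/L$ smooth proper and use the finiteness of Hyodo--Kato cohomology of proper semistable log schemes, \Cref{GKiscrys}\eqref{crys}, \Cref{cor:HKtilt}), so the construction takes values in $\mcD^\flat_{\varphi,N,G_L}(K_0^{\unr})^{\op}$; composing with the functor forgetting $G_L$ recovers the realization of \Cref{cor:commK} on $\RigDA_{\gr}(L)^{\ct}$.

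The only point that genuinely requires work is the \emph{continuity} of the resulting $G_L$-action, since $\mcD^\flat_{\varphi,N,G_L}(K_0^{\unr})$ by definition records continuous actions only; I expect this --- not the formal descent --- to be the heart of the proof. It suffices to check it on the generators $\Q_L(X^{\an})$, and there the $G_L$-action on $R\Gamma_{\HK}(X_C)$ coincides with the one built into Colmez--Nizio\l's Hyodo--Kato cohomology of rigid analytic varieties over $C$ (cf.\ the realization $R\Gamma^{\CN}_{\HK}$ and the comparison $R\Gamma^{\dagger}_{\dR}(X_C)\cong R\Gamma_{\HK}(X_C)\otimes_{K_0^{\unr}}C$), whose continuity is established in \emph{loc.\ cit.}, e.g.\ by reduction over a finite extension to the classical Hyodo--Kato cohomology of a semistable model. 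As continuous semilinear actions are stable under cones, shifts, retracts and filtered colimits, continuity propagates to the whole idempotent-complete thick subcategory generated by these motives, and the desired functor $\RigDA(L)^{\ct}\to\mcD^\flat_{\varphi,N,G_L}(K_0^{\unr})^{\op}$ follows.
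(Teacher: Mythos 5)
Your approach is essentially correct and morally the same as the paper's, but the paper packages the argument much more cleanly. The paper's proof does not unwind the $G_L$-descent data from $C$ by hand as you do. Instead it observes that (i) the realization from the previous definition is already a natural transformation of presheaves $\RigDA_{\gr}(-)\to\mcD_{\varphi,N}((-)_0)^{\op}$ on the category of finite extensions of $K$, (ii) there is a further natural transformation $\mcD^b_{\varphi,N}((-)_0)\to\mcD^b_{\varphi,N,G_{(-)}}(K_0^{\unr})$ (scalar extension to $K_0^{\unr}$ with its canonical semilinear Galois action) which is an equivalence at $C$, (iii) the target $L\mapsto\mcD^b_{\varphi,N,G_L}(K_0^{\unr})^{\op}$ is an \'etale sheaf, and (iv) the \'etale sheafification of $L\mapsto\RigDA_{\gr}(L)$ is $L\mapsto\RigDA(L)$ by \cite[Theorem 3.3.3(2)]{AGV}. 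A map of presheaves into a sheaf factors through the sheafification, and that is the entire proof. This one-line sheaf-theoretic argument buys two things you had to argue separately: the homotopy-coherence of the $G_L$-action (your sentence ``the construction over $C$ is natural in the field, hence $G_L$-equivariant'' is exactly the coherence that needs to be established, and simply asserting naturality of each constituent functor does not immediately produce a coherent action on the composite), and the continuity of the action, which you correctly flag as the heart of the matter but which comes for free from the sheafification formalism, because one only ever deals with finite Galois extensions $L'/L$ and takes a colimit. Your proposed reduction of continuity to Colmez--Nizio\l\ would work but is unnecessary once one adopts the sheaf perspective.
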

					\begin{proof}As there is a natural transformation $\mcD^b_{\varphi,N}((-)_0)\to  \mcD^b_{\varphi,N,G_{(-)}}(K_0^{\unr})$ which takes the same values on $C$, and the functor on the right  has Galois descent, it suffices to apply the \'etale sheafification on the functor $R\Gamma_{\HK}$  of \Cref{def:HKL} (see \cite[Theorem 3.3.3(2)]{AGV}). 
					\end{proof}
                {
                \begin{rmk}
                    We can describe the extension above more explicitly, as follows. Every element in $\RigDA(L)^{\ct}$ lies in $\RigDA(L')^{\ct}_{\gr}$ after a sufficiently large finite base change $L'/L$, by means of \Cref{prop:RigDAgr1}. In particular, by Galois descent (see e.g. \cite[Theorem 2.3.4]{AGV}), we may identify the category $\RigDA(L)^{\ct}$ with $\varinjlim (\RigDA(L')^{\ct}_{\gr})^{\Gal(L'/L)}$. Using \Cref{def:HKL}, it then gains a canonical (contravariant) realization functor to $\varinjlim \mcD^b_{\varphi,N}(L'_0)^{\Gal(L'/L)}\simeq \mcD^\flat_{\varphi,N,G_L}(K_0^{\unr})$ as claimed.
                \end{rmk}}
					\begin{rmk}
The restrictions to $\RigDA(-)^{\ct}$ of the realization functors above			are monoidal. This is a formal consequence of \Cref{kunneth}.
					\end{rmk}
					\begin{rmk}%
						It is  clear from our definition that the Hyodo--Kato realization $R\Gamma^T_{\HK}$ induced on $\DA(k(\!(T)\!))$, with its structure as $(\varphi, N)$-module, coincides with  the  one considered by Lazda and Pal \cite[Chapter 5]{LP16} as they both have \'etale descent, excision and compare to Hyodo--Kato \cite[Theorem 4.27, Lemma 4.36, Theorem 5.46]{LP16}.  %
					\end{rmk}
					The following corollary is an ``arithmetic" version of \Cref{cor:HKtilt}. %
					\begin{cor}\label{cor:commK2}
						Let $K$ be a finite extension of $\Q_p$. Let $K_{\infty}$ be a totally ramified perfectoid extension of $K$, whose tilt $K_{\infty}^\flat$ is the completed perfection of a finite extension $K^\flat$ of $\F_p(\!(T)\!)$. If $Z$ is a smooth proper variety over $K^\flat$ with semi-stable reduction, then the   image of $\Q_{K^\flat}(Z^{\an}) $ under the composition below
						\be
						\RigDA_{\gr}(K^\flat)\cong\RigDA_{\gr}(K_{\infty}^\flat)\cong\RigDA_{\gr}(K_{\infty})\cong\RigDA_{\gr}(K)\to\mcD_{\varphi,N}(K_0)^{\op}
						\ee
						coincides with the  Hyodo--Kato cohomology of its special fiber.\qed
					\end{cor}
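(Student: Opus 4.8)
The plan is to follow the single motive $\Q_{K^\flat}(Z^{\an})$ along the chain and to reduce everything to the common ``hub'' $\logDA^{\sst}(k^0)$. Write $k$ for the residue field: since $K_{\infty}/K$ is totally ramified and taking the completed perfection of a characteristic-$p$ local field leaves the residue field unchanged, all of $K$, $K_{\infty}$, $K^\flat$, $K_{\infty}^\flat$ have residue field $k$, so the target is $\mcD_{\varphi,N}(K_0)^\op$ with $K_0=W(k)[1/p]$. First I would fix a semistable formal model $\mf Z$ of $Z$ over $\mcO_{K^\flat}$ with its natural log structure; then $\mf Z$ is semistable over $\mcO_{K^\flat}^\times$ in the sense of \Cref{sst}, its log special fiber $Z_0:=\mf Z_\sigma$ is semistable over $k^0$, and $\Q_{K^\flat}(Z^{\an})=\xi_{K^\flat}\bigl(\Q_{\mcO_{K^\flat}^\times}(\mf Z)\bigr)$ lies in $\RigDA_{\gr}(K^\flat)$ by \Cref{ssisgr}. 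By \Cref{iotaiseq}, the object $\Q_{\mcO_{K^\flat}^\times}(\mf Z)$ of $\logFDA^{\sst}(\mcO_{K^\flat}^\times)$ corresponds to $\Q_{k^0}(Z_0)\in\logDA^{\sst}(k^0)$.

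Next I would verify that each of the three equivalences in the chain is compatible with the log Monsky--Washnitzer functors $\xi$, hence with the identifications $\logFDA^{\sst}(\mcO^\times_{(-)})\cong\logDA^{\sst}(k^0)$ of \Cref{iotaiseq}. The two outer arrows are base changes along totally ramified (pro-)extensions (the completed perfection $K_{\infty}^\flat/K^\flat$, and the totally ramified perfectoid $K_{\infty}/K$), which are equivalences by \Cref{prop:RigDAgr1}(1),(2) (here we use $\Q$-coefficients) and are compatible with $\xi$ by the same statements together with \Cref{iotaiseq} applied over the (discretely valued) finite subextensions. The middle arrow is the good-reduction tilting equivalence of \Cref{prop:RigDAgr1}(4), and its compatibility with $\xi$ is \Cref{prop:comm0}, using \Cref{exm:logstructures} that $\mcO_{K_{\infty}}^\times$ and $\mcO_{K_{\infty}^\flat}^\times$ induce the same log structure $k^0$ on $\Spec k$. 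Concretely this is the commutative diagram of \Cref{cor:commK}, extended from finite extensions to $K_{\infty}$ and $K_{\infty}^\flat$ by the continuity statements \Cref{prop:RigDAgr1}(1) and \Cref{cor:cont}. Thus the composite equivalence $\RigDA_{\gr}(K^\flat)\cong\RigDA_{\gr}(K)$ carries $\Q_{K^\flat}(Z^{\an})$ to $\xi_K(M_0)$, where $M_0\in\logFDA^{\sst}(\mcO_K^\times)$ is the object corresponding to $\Q_{k^0}(Z_0)$.

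Finally I would use that the realization $\RigDA_{\gr}(K)\to\mcD_{\varphi,N}(K_0)^\op$ is by construction the composite $\RigDA_{\gr}(K)\cong\RigDA_{\gr}(K_0)\to\mcD_{\varphi,N}(K_0)^\op$ ($K/K_0$ is totally ramified), and that over $K_0$ the square relating $\xi$, the equivalence $\logFDA^{\sst}(\mcO_{K_0}^\times)\cong\logDA^{\sst}(k^0)$ and the classical Hyodo--Kato realization of \Cref{dfn:HK} commutes (this is the Proposition preceding \Cref{def:HKequi}). Applying it to $\xi_K(M_0)$ yields $R\Gamma_{\HK}(Z_0/W(k))$ with its usual $(\varphi,N)$-structure, i.e.\ the Hyodo--Kato cohomology of the special fiber, which is the asserted value.

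The step I expect to be the main obstacle is the bookkeeping at the hub: checking that the three $\RigDA_{\gr}$-equivalences are genuinely intertwined by the identifications $\logFDA^{\sst}(\mcO^\times_{(-)})\cong\logDA^{\sst}(k^0)$ (in particular that the ``semistable'' part of each category is preserved along the chain), and dealing with the fact that $K_{\infty}$ and $K_{\infty}^\flat$ are infinite perfectoid extensions, over which $\logFDA^{\sst}$ is not literally defined, so the comparison must be run through finite subextensions and then passed to the colimit. As all the needed inputs (\Cref{prop:RigDAgr1}, \Cref{prop:comm0}, \Cref{iotaiseq}, \Cref{cor:cont}, \Cref{ssisgr}, and the Monsky--Washnitzer description of the arithmetic Hyodo--Kato realization) have already been collected in \Cref{cor:commK}, this should amount to careful assembly rather than a genuinely new argument.
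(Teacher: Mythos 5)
Your proposal is correct and takes essentially the same approach as the paper: the paper's proof is simply ``Use the diagrams of \Cref{prop:comm}, \Cref{CN0}(2) and \Cref{cor:commC}(2),'' and your argument is a more detailed assembly of exactly those ingredients — reducing through the hub $\logDA^{\sst}(k^0)$ via \Cref{iotaiseq}, using \Cref{prop:comm0}/\Cref{prop:RigDAgr1} for compatibility of the three equivalences with $\xi$, and invoking the arithmetic commutative square preceding \Cref{def:HKequi} for the last step. The only thing worth flagging is that you silently identify the resulting $(\varphi,N)$-module with $R\Gamma_{\HK}(Z_0/W(k))$ ``with its usual $(\varphi,N)$-structure'' even when $K^\flat$ is ramified over $\F_p(\!(T)\!)$; as recorded in \Cref{eK1} and \Cref{eK2}, the chain $\RigDA_\gr(K^\flat)\cong\RigDA_\gr(K_0^\flat)$ introduces a ramification factor $e$ into the monodromy, so strictly the result is the $N^{(e)}$-normalized operator — harmless for the monodromy filtration, as noted after \Cref{cor:HKtilt}, but worth being aware of.
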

					\begin{proof}
						Use the diagrams of  \Cref{prop:comm}, \Cref{CN0}(2) and  \Cref{cor:commC}(2).
					\end{proof}

\section{Motivic existence of  tubular neighborhoods}%
\label{sec:tube}
We now prove a motivic analogue of \cite[Theorem 3.6]{huber-f2} (see also \cite{huber-f1} and \cite{huber-f3}). 
We start by proving that the motive of a smooth family of rigid analytic varieties is locally constant around each rational point of the base (\Cref{cor:tn}.\eqref{relative}). This follows from a ``spreading-out" (or ``continuity") property of rigid motives, proved in \cite{AGV}. From this, the existence of a tubular neighborhood that does not change the motive immediately follows. 
A similar trick is used in the proof of \cite[Theorem 4.46]{LBV}. 

We remark that {if} $S$ is smooth over $K$, around any $K$-rational point there is an open neighborhood which  is isomorphic to $\B^N_K$ (\cite[Theorem 2.1.5]{Ber93}).

\begin{prop}\label{cor:tn}%
	Let $X\to S$ be a qcqs smooth morphism  of rigid analytic varieties over a complete non-archimedean field $K$. Let $s$ be a $K$-rational point of $S$ and let $X_s$ denote the fiber of $X$ over it. 
	\begin{enumerate}
		\item\label{relative} For any sufficiently small open neighborhood $U$ of $s$, we have a canonical equivalence $\Q_S(X_s\times_K U)\cong\Q_S(X\times_SU)$ in $\RigDA(S)$.%
		\item\label{absolute} Assume that $S$ is smooth over $\Spa K$. Then, for any sufficiently small open neighborhood $U$ of $s$ that is isomorphic to $\B^N_K$, the natural morphism $\Q_K(X_s)\to\Q_K(X\times_SU)$ in $\RigDA(K)$ is invertible.
	\end{enumerate}
\end{prop}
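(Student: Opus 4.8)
The plan is to derive both assertions from the ``spreading out'' (continuity) property of rigid analytic motives, \cite[Theorem 2.8.15]{AGV}. Applied to the $K$-rational point $s$ --- whose completed residue field is $K$ itself --- this property yields an equivalence
\[
	\varinjlim_{U\ni s}\RigDA(U)^{\ct}\;\xrightarrow{\ \sim\ }\;\RigDA(K)^{\ct},
\]
induced by pullback along the point $s\colon\Spa K\to U$, the colimit being taken over the filtered poset of quasi-compact open neighbourhoods $U$ of $s$ in $S$ (ordered by reverse inclusion). I would write $\pi_U\colon U\to\Spa K$ for the structure morphism, $j_U\colon U\hookrightarrow S$ for the open immersion, and $f_U\colon X\times_S U\to U$ for the pullback of $X\to S$; note that $f_U$ is smooth and qcqs, so $\Q_U(X\times_S U)$ is a compact object of $\RigDA(U)$, as is $\pi_U^{*}\Q_K(X_s)$, because $\pi_U$ is qcqs and $\Q_K(X_s)$ is compact.

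For assertion \eqref{relative} I would first reduce it to the claim that $\Q_U(X\times_S U)\cong\pi_U^{*}\Q_K(X_s)$ in $\RigDA(U)$ for all sufficiently small $U$: indeed $\Q_S(X\times_S U)=j_{U\sharp}\Q_U(X\times_S U)$, while $\Q_S(X_s\times_K U)=j_{U\sharp}\Q_U(X_s\times_K U)\cong j_{U\sharp}\pi_U^{*}\Q_K(X_s)$ by smooth base change along $\pi_U$, so it suffices to apply $j_{U\sharp}$. To prove the claim over $U$ I would compute the images of the two compact objects $\Q_U(X\times_S U)$ and $\pi_U^{*}\Q_K(X_s)$ of $\varinjlim_U\RigDA(U)^{\ct}$ under the equivalence above: smooth base change along $s$ identifies $s^{*}\Q_U(X\times_S U)\cong\Q_K(X_s)$, and $s^{*}\pi_U^{*}\Q_K(X_s)=(\pi_U\circ s)^{*}\Q_K(X_s)=\Q_K(X_s)$ since $\pi_U\circ s=\id_{\Spa K}$; both identifications are canonical. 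Hence the two objects have canonically isomorphic images, so they are canonically isomorphic in the filtered colimit, i.e.\ the isomorphism is realized over some sufficiently small $U$ --- which gives \eqref{relative}.

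For assertion \eqref{absolute}, using \cite[Theorem 2.1.5]{Ber93} I would fix $U\cong\B^N_K$ small enough that \eqref{relative} applies; then $\pi_U$ is smooth and qcqs, so $\pi_{U\sharp}$ exists and is left adjoint to $\pi_U^{*}$. Applying $\pi_{U\sharp}$ to the equivalence $\Q_U(X\times_S U)\cong\pi_U^{*}\Q_K(X_s)$ and combining the projection formula with $\B^1$-invariance (whence $\Q_K(\B^N_K)\cong\Q_K$) gives
\[
	\Q_K(X\times_S U)=\pi_{U\sharp}\Q_U(X\times_S U)\;\cong\;\pi_{U\sharp}\pi_U^{*}\Q_K(X_s)\;\cong\;\Q_K(U)\otimes\Q_K(X_s)\;\cong\;\Q_K(X_s).
\]
The remaining point is to check that this composite is the inverse of the \emph{natural} map $\Q_K(X_s)\to\Q_K(X\times_S U)$ attached to the closed immersion $X_s\hookrightarrow X\times_S U$ of the fibre over $s$. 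Unwinding definitions, this natural map is the value at $\Q_U(X\times_S U)$ of the natural transformation $s^{*}\Rightarrow\pi_{U\sharp}$ obtained from the unit $\id\Rightarrow\pi_U^{*}\pi_{U\sharp}$ together with the identity $s^{*}\pi_U^{*}=\id$; on objects of the form $\pi_U^{*}M$ this transformation is, via the projection formula, the map $\Q_K(s)\otimes\id_M\colon M\to\Q_K(U)\otimes M$ induced by the section $s$, which is invertible as soon as $\Q_K(U)\cong\Q_K$. Since \eqref{relative} exhibits $\Q_U(X\times_S U)$ as $\pi_U^{*}\Q_K(X_s)$, this proves \eqref{absolute}.

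I expect the main obstacle to be the very first step: verifying that \cite[Theorem 2.8.15]{AGV} genuinely applies to the pro-system of neighbourhoods of the rational point $s$ and that the resulting comparison functor is the pullback $s^{*}$ --- this is the place where rigid motives really ``spread out''. Granting that, the remaining work is essentially careful bookkeeping with smooth base-change isomorphisms, the projection formula and $\B^1$-invariance; the one delicate accounting is the identification, in \eqref{absolute}, of the geometrically natural comparison map with the canonical transformation $s^{*}\Rightarrow\pi_{U\sharp}$.
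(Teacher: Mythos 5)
Your proof is correct and follows essentially the same route as the paper's: both use the continuity/spreading-out equivalence of \cite[Theorem 2.8.15]{AGV}, both reduce (1) to comparing $\Q_U(X\times_S U)$ with $\pi_U^*\Q_K(X_s)$ by observing that the pullback along the structure morphism is a quasi-inverse of the specialization $s^*$, and both deduce (2) by applying $\pi_{U\sharp}$, the projection formula, and $\B^1$-contractibility of $U\cong\B^N_K$. The only difference is presentational: you spell out more explicitly, via the transformation $s^*\Rightarrow\pi_{U\sharp}$, why the resulting equivalence coincides with the map induced by the inclusion $X_s\hookrightarrow X\times_S U$, a point the paper asserts without elaboration.
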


\begin{proof}Since the statement of \eqref{relative} is local on $S$, we may assume that $X$ is qcqs. By \cite[Corollary 2.4.13 and Theorem 2.8.15]{AGV} the natural specialization maps $s^*$ induce an equivalence $\RigDA(s)^{\ct}=\RigDA(K)^{\ct}\cong\varinjlim\RigDA(V)^{\ct}$ in $\Prlo$ as $V$ runs among affinoid neighborhoods of $s$.  
	
	We notice that the pullback $\Pi^*$ along the structural morphism $\Pi\colon S\to\Spa K$ defines an explicit quasi-inverse of the functor above. This implies that for any compact motive $M$ in $\RigDA(S)$, its restriction to $U$ is canonically isomorphic to $\Pi^*s^*M$ in $\RigDA(U)$ for $U$ sufficiently small. If we apply this to $M=\Q_{S}(X)$ (which is compact by \cite[Corollary 2.4.13 and Proposition 2.4.22]{AGV}) we deduce $\Q_{U}(X_s\times_K U) \cong \Q_{U}(X\times_SU)$ or, equivalently $\Q_{S}(X_s\times_K U) \cong \Q_{S}(X\times_SU)$.

	To obtain \eqref{absolute}, we note that, by %
	 applying the functor $\Pi_\sharp $ (see \cite[Proposition 2.2.1(1)]{AGV} for the definition) to \eqref{relative}, we obtain a canonical equivalence $\Q_K(X_s\times U)\cong\Q_K(X|_U)$, whose composition with the equivalence $\Q_K(X_s)\cong\Q_K(X_s\times_K U)$ is the morphism $\Q_K(X_s)\to \Q_K(X\times_SU)$ defined by the canonical inclusion $X_s\to X\times_SU$. Thus, the assertion follows. 
\end{proof}

\begin{rmk}
Note that in \Cref{cor:tn}\eqref{absolute}	we could choose $U$ to be any  rigid analytic variety which becomes contractible in $\RigDA(K)$, e.g. a poly-disc with with radius in $\sqrt{|K|}$ \cite[Proposition 1.3.4]{ayoub-rig}.
\end{rmk}
\begin{rmk}\label{smoothness assumption}\label{rmk:hope}
	Note that we need the smoothness assumption contrary to Huber's result \cite[Theorem 3.6]{huber-f2}. %
	In the above proof, we use it to deduce that the motive $f_!f^!1$ is compact in $\RigDA(S)$ for the structural morphism $f$; this would not be true if $f$ had singular fibers (cf.\ \cite[Theorem 6.2.2]{huber}, \cite[(0.2)]{huber-f1}, and \cite[(0.1)]{huber-f2}). %
	However, we expect that the conclusion of \Cref{cor:tn}\eqref{absolute}  holds in more general situations. 
\end{rmk}

\begin{rmk}\label{ell independent tn}
	A version of \Cref{cor:tn} with torsion coefficients 
	recovers, under the smoothness assumption, a result of K.\ Ito: In \cite[Theorem 1.2]{Ito20}, it is proved that, in an algebraizable situation, we can $\ell$-independently take a tubular neighborhood that does not change the $\F_\ell$-coefficient \'etale cohomology, where $\ell$ is prime to the residual characteristic of $K$. 
	There, the proof uses the theory of nearby cycles over general bases, and is completely different from ours. 
\end{rmk}

{
\begin{rmk}
A result on the local existence of tubular neighborhoods  is already due to Kiehl \cite[Theorem 1.18]{kiehl}. Also, it has been observed and communicated to us by Gabber that one can  prove the following form of the statement (which is sufficient for our specific application): let $f\colon X\to \B^n$ be a smooth  morphism of affinoid adic spaces over $K$, %
then there exist an element $\e\in |K^\times|$ and a retraction $X|_{\B^n(\e)}\to X_0$ of $X_0\subset X|_{\B^n(\e)}$ such that the induced morphism $X|_{\B^n(\e)}\to X_0\times \B^n(\e)$ is an isomorphism. To prove this, we may and do pick a closed immersion $X\to \B^N$ over $\B^n$, and fix a section $s$ of the surjection $X_0\times_{\B^N}T\B^N\to T_{X_0}\B^N$ on the vector bundles. 
We consider the map $T_{X_0}\B^N\to \A^N$ defined by $(x,v)\mapsto x+s(v)$, which is \'etale on a neighborhood of $X_0$, and hence restricts to an isomorphism on an open neighborhood of the zero section (see e.g. \cite[Corollaire 1.1.56]{ayoub-rig}). 
Thus, by replacing $X$ with a neighborhood of $X_0$ we may assume that there exists a retraction $r\colon X\to X_0$. 
Then the map $(r,f)\colon X\to X_0\times\B^n$ is \'etale on a neighborhood of $X_0$, and hence induces an isomorphism on a neighborhood of $X_0$.  
\end{rmk}

}

We now focus on the main geometric object to which we will apply the above result. We refer to \cite[Section 8]{scholze} for details on toric varieties.

We fix a complete non-archimedean valued field $K$, a projective smooth toric variety $X_\Sigma$ over $K$ of dimension $n$,  {where $\Sigma$ is a fan in a fixed Euclidean space. Let $T$ be the torus acting on $X_\Sigma$. Recall from \cite[Section 4.1]{cox_little_schenck} (or \cite[Proposition 8.4]{scholze}) that for each 1-dimensional cone (i.e., a ray) $\tau_i$ of $\Sigma$ there is an open subset $U_{\tau_i} \simeq \A^1 \times \G_m^{n-1}$ of $X_\Sigma$, containing the closed $T$-invariant subset $\{0\}\times \G_m^{n-1}$. Its closure in $X_\Sigma$ is a $T$-invariant divisor on $X_\Sigma$, denoted $D(\tau_i)$. A \emph{$T$-Weil divisor} on $X_\Sigma$ is a Weil divisor of the form $D=\sum a_i D(\tau_i)$ with $a_i\in \Z$. It is well-known (see e.g. \cite[Theorem 4.1.3]{cox_little_schenck}) that every Weil divisor on $X_\Sigma$ is linearly equivalent to a $T$-Weil divisor. In particular, if $Y\subset X_\Sigma$ is a hypersurface, there is a $T$-Weil divisor $D$ equivalent to $Y$ and we can choose a section $f\in H^0(X_\Sigma, \mathcal{O}(D))$ whose zero locus is $Y$.

For $\varepsilon\in|K^*|$, let $Y(\e)$ denote the analytic open neighborhood of $Y^{\an}$ inside $X_\Sigma^{\an}$ introduced in the proof of \cite[Proposition 8.7]{scholze}, which is described (locally) by the inequalities $|f(x)|\leq\varepsilon$. More precisely, it is constructed as follows.
Note that since both $X_\Sigma$ and $D$ are defined combinatorially, they both admit natural models $\mathfrak{X}_{\Sigma, \mcO_K}$ and $\mathfrak{D}$ over {$\Spf(\mathcal{O}_K)$} (defined by the same toric data).  By choosing a cover by open formal subschemes $\mathfrak{U}$ of $\mathfrak{X}_{\Sigma, \mathcal{O}_K}$ {trivializing $\mc O(\mathfrak{D})$ and by fixing a trivialization $\mathcal{O}(\mathfrak{D}_{|\mathfrak{U}})\simeq \mc O_\mathfrak U$, we obtain functions $f_\mathfrak U \in H^0(\mathfrak{U}, \mathcal{O}_{\mathfrak{U}})$, } and thus we can consider the rational subsets of $\mathfrak{U}_K$ defined {by the inequalities $|f_\mathfrak{U}(x)|\leq\e$}
(which are independent of {the choice of trivializations}).  
As in \cite{scholze}, we will write $Y(\e)$ for their union.
}

{Suppose now that $Y\subset X_\Sigma$ is    a smooth subvariety} of codimension $c$ which is the scheme-theoretic intersection $Y_1\times_{X_\Sigma}\cdots\times_{X_\Sigma}Y_c$ of $c$ hypersurfaces $Y_i=V(f_i)$ with $f_i\in H^0(X_{\Sigma}, \mcO(D_i))$ a global section of some line bundle $\mcO(D_i)$. 
For $\varepsilon\in|K^*|$, {we can consider the analytic open neighborhood $Y(\e)$ of $Y^{\an}$ inside $X_\Sigma^{\an}$} defined (locally) by the inequalities $|f_i(x)|\leq\varepsilon$. 
\begin{cor}\label{hubermot2}
	Let $K$ be a complete non-archimedean field, $X_\Sigma$ be a projective smooth toric variety over $K$, and $Y$ be a smooth %
	scheme-theoretic intersection $Y_1\times_{X_\Sigma}\cdots\times_{X_\Sigma}Y_c$ of $c$ hypersurfaces $Y_i=V(f_i)$. 
	Then, there exists an element $\e_0\in |K^*|$ such that for any $\varepsilon<\e_0$, the natural morphism $\Q_K(Y)\to\Q_K(Y(\varepsilon))$ in $\RigDA(K)$ is invertible.
\end{cor}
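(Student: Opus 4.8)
The plan is to present $Y(\varepsilon)$, for $\varepsilon$ small, as the preimage of a small polydisc under a smooth morphism having $Y^{\an}$ as a fibre, so that \Cref{cor:tn}\eqref{absolute} applies. Since the $f_i$ live in the line bundles $\mcO(D_i)$ rather than in $\mcO_{X_\Sigma}$, no global morphism $(f_1,\dots,f_c)$ exists, so the construction has to be carried out locally on $X_\Sigma$ and then glued by descent.

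First I would fix a finite admissible cover $\{B_j\}$ of $X_\Sigma^{\an}$ by closed polydiscs $B_j\cong\B^{n}_K$, $n=\dim X_\Sigma$, coming (through a fixed proper smooth integral toric model) from the torus-invariant affine charts of $X_\Sigma$. On such a chart each $\mcO(D_i)$ is canonically trivialised by a section of norm $1$, so $f_i$ restricts to a function $f^{(j)}_i\in\mcO(B_j)$ with $|f_i|=|f^{(j)}_i|$ on $B_j$; hence $Y\cap B_j=V(f^{(j)}_1,\dots,f^{(j)}_c)$ and, since $Y(\varepsilon)$ is locally defined by the inequalities $|f_i(x)|\le\varepsilon$, one has $Y(\varepsilon)\cap B_j=\{x\in B_j:|f^{(j)}_i(x)|\le\varepsilon\ \text{for all }i\}$. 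Writing $\varepsilon\B^c_K\subseteq\B^c_K$ for the sub-polydisc of radius $\varepsilon\in|K^*|$, the morphism $F_j:=(f^{(j)}_1,\dots,f^{(j)}_c)\colon B_j\to\B^c_K$ then satisfies $F_j^{-1}(0)=Y\cap B_j$ and $F_j^{-1}(\varepsilon\B^c_K)=Y(\varepsilon)\cap B_j$.

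The crucial — and only delicate — point is that $F_j$ is \emph{smooth} on a neighbourhood of $Y\cap B_j$ as soon as $\varepsilon$ is small. Because $X_\Sigma$ is smooth of dimension $n$ and $Y$ is a \emph{scheme-theoretic} complete intersection which is moreover smooth of codimension $c$, the differentials $df^{(j)}_1,\dots,df^{(j)}_c$ are pointwise linearly independent along $Y\cap B_j$, i.e.\ $F_j$ is a submersion — hence smooth — along $Y\cap B_j$. Its smooth locus is therefore an admissible open containing $Y\cap B_j$, and the complementary Zariski-closed subset $\mcZ_j$ is an affinoid disjoint from $Y$; thus $(f^{(j)}_1,\dots,f^{(j)}_c)$ generate the unit ideal on $\mcZ_j$ and $\max_i|f^{(j)}_i|\ge\delta_j$ there for some $\delta_j>0$, so $F_j$ is smooth on the affinoid subdomain $\{x\in B_j:|f^{(j)}_i(x)|\le\varepsilon\}$ of $B_j$ whenever $\varepsilon<\delta_j$. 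Fixing $\varepsilon'\in|K^*|$ with $\varepsilon'<\min_j\delta_j$ and applying \Cref{cor:tn}\eqref{absolute} to the qcqs smooth morphism $F_j\colon Y(\varepsilon')\cap B_j\to\varepsilon'\B^c_K$ of smooth rigid varieties over $K$, at the $K$-rational point $0$ and with $U=\varepsilon\B^c_K$, yields $\varepsilon_j\le\delta_j$ such that for all $\varepsilon\le\varepsilon_j$ the natural morphism $\Q_K(Y\cap B_j)\to\Q_K(Y(\varepsilon)\cap B_j)$ induced by the inclusion is invertible. The same argument applies verbatim to every finite overlap $B_{j_0}\cap\dots\cap B_{j_p}$ — a smooth affinoid subdomain of $B_{j_0}$ on which all the $\mcO(D_i)$ are still trivialised — producing one threshold per such intersection.

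Finally I would let $\varepsilon_0$ be the minimum of the finitely many thresholds obtained. The admissible cover $\{B_j\}$ of $X_\Sigma^{\an}$ induces compatible admissible covers of $Y^{\an}$ and of $Y(\varepsilon)$, and, being induced by the canonical inclusions $Y^{\an}\hookrightarrow Y(\varepsilon)$, the comparison morphisms above are compatible with all restriction maps; hence by \'etale (\v{C}ech) descent for $\RigDA$ they assemble to an equivalence of the \v{C}ech objects computing $\Q_K(Y^{\an})$ and $\Q_K(Y(\varepsilon))$, so that $\Q_K(Y)\to\Q_K(Y(\varepsilon))$ is invertible for every $\varepsilon<\varepsilon_0$. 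The main obstacle is the smoothness statement of the third paragraph: it is precisely here that one must assume $Y$ to be a scheme-theoretic, not merely set-theoretic, complete intersection — so that the defining equations are transverse and the morphisms $F_j$ to which \Cref{cor:tn} is applied are smooth — in accordance with the discussion in \Cref{remarks}.
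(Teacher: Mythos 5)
Your proof is correct and takes essentially the same approach as the paper: the paper's proof is a one-liner that invokes quasi-compactness of $X_\Sigma$ to localize and then applies \Cref{cor:tn}\eqref{absolute}, and your argument simply spells out the cover, the local trivializations, the verification that each $F_j$ is smooth on $\{\max_i|f^{(j)}_i|\le\varepsilon\}$ for small $\varepsilon$ (which is exactly where the smooth scheme-theoretic complete intersection hypothesis enters), and the gluing by descent after taking the minimum of finitely many thresholds.
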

\begin{proof}%
	{As above, following  the proof of \cite[Proposition 8.7]{scholze}}, we consider the natural formal model $\mf X_{\Sigma,\mc O_K}$ of $X_\Sigma$, and an integral model of $\mc O(D_i)$ by regarding $D_i$ as $T$-Weil divisors. 
	Since $\mf X_{\Sigma,\mc O_K}$ is quasi-compact, we can prove the statement {locally on $\mf X_{\Sigma,\mc O_K}$}. 
	
	On each open formal subscheme $\mf U$ of $\mf X_{\Sigma,\mc O_K}$ trivializing all the line bundle $\mc O(D_i)$, we fix trivializations of $\mc O(D_i)$, so that $(f_1,\ldots,f_c)$ gives a morphism $f\colon \mf U_K\to \B^c_K$. 
	As $Y$ is smooth, the morphism $f$ is smooth on a neighborhood of the fiber $f^{-1}(0)$. 
	Here note that the neighborhoods $f^{-1}(\B^c_K(\e))$, $\e\in |K^*|$, form a fundamental system of neighborhoods of $f^{-1}(0)$ (see for example \cite[Lemma 4.3]{Ito20}). 
	Thus, there exists an element $\e\in |K^*|$ such that $f$ is smooth over $\B^c_K(\e)$, for which we can apply \Cref{cor:tn},\eqref{absolute}. 
\end{proof}

\section{The $p$-adic weight-monodromy for complete intersections}\label{sec:proof}
We can now prove our main theorem. We first make some recollections on the notation we will use. 

Let $\bar{\Q}_p$ denote an algebraic closure of $\Q_p$ and $\C_p$ its completion. 
Let $\Q_p^\unr$ denote the maximal unramified extension of $\Q_p$ inside $\bar{\Q}_p$. 
\begin{dfn}
Let $w$ be an integer. 
\begin{enumerate}
\item Let $K_0$ be a finite unramified extension of $\Q_p$. We let $p^a$ be the cardinality of the residue field of $K_0$. 
We say that a finitely dimensional $\varphi$-module $D$ is \emph{pure of weight $w$} if every eigenvalue $\alpha$ of the $K_0$-linear endomorphism $\varphi^a$ is pure of weight $w$ in the sense of \cite[Definition 1.2.1]{Del80}. 
\item Let $D$ be a finitely dimensional $\varphi$-module over $\Q_p^\unr$. We can take a finite unramified extension $K_0$ of $\Q_p$ inside $\Q_p^\unr$ and a $\varphi$-module $D_0$ over $K_0$ together with an isomorphism $D_0\otimes_{K_0}\Q_p^\unr\cong D$ of $\varphi$-modules over $K_0^\unr$. 
We say that $D$ is \emph{pure of weight $w$} if $D_0$ is pure of weight $w$. This definition does not depend on the choices. 
\item Let $D$ be a finitely dimensional $(\varphi,N)$-module over $\Q_p^\unr$. Then $N$ is a nilpotent endomorphism, and hence it has the associated monodromy filtration $M_\bullet$ defined in \cite[Proposition 1.6.1]{Del80}. 
We say that $D$ is \emph{quasi-pure of weight $w$} if, for every integer $j$, the $j$-th graded quotient $\gr^M_jD$ of the monodromy filtration, which is a $\varphi$-module over $\Q_p^\unr$, is pure of weight $w+j$. 
\end{enumerate}
\end{dfn}

As noted in the introduction, we may give an equivalent formulation of the weight-monodromy conjecture: 

\begin{conj}
Let $X$ be a proper smooth algebraic variety over $\C_p$ defined over $\bar{\Q}_p$. Then the $(\varphi,N)$-module $H^i_{\HK}(X^{\an})$ is quasi-pure of weight $i$. %
\end{conj}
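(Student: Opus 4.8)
The statement above is put forward as a reformulation of \Cref{pwmc}, so the natural thing to establish at this point is the \emph{equivalence} of the two conjectures; there is no new weight-monodromy input. The plan is to match, item by item, the data entering the present statement with Beilinson's Hyodo--Kato cohomology appearing in \Cref{pwmc}.

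First I would reduce the hypothesis ``defined over $\bar{\Q}_p$'' to ``defined over a finite extension of $\Q_p$''. A proper smooth $\bar{\Q}_p$-scheme is of finite presentation and $\bar{\Q}_p/\Q_p$ is algebraic, so any such $X$ is of the form $Y\times_K\C_p$ for some finite extension $K$ of $\Q_p$ and some proper smooth variety $Y$ over $K$; passing to analytifications, $X^{\an}$ is the base change to $\C_p$ of $Y^{\an}$. Conversely, every proper smooth variety over a non-archimedean local field of mixed characteristic $(0,p)$ --- that is, over a finite extension of $\Q_p$ --- produces such an $X$, so the two families of statements range over exactly the same varieties.

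Next I would identify the $(\varphi,N)$-module $H^i_{\HK}(X^{\an})$ defined here --- through the motivic realization $\DA(\C_p)\stackrel{\An^*}{\to}\RigDA(\C_p)\stackrel{R\Gamma_{\HK}}{\to}\mcD_{\varphi,N}(K_0^{\unr})^{\op}$ --- with the one in \Cref{pwmc}. By the discussion around \Cref{CN0}, on $\RigDA(\C_p)$ this realization agrees with the overconvergent Hyodo--Kato realization $R\Gamma_{\HK}^{\CN}$ of Colmez--Nizio{\l}, which by \cite{CN19} (building on \cite{Bei13}) computes, for $X^{\an}=(Y_{\C_p})^{\an}$, the underlying $(\varphi,N)$-module of Beilinson's Hyodo--Kato cohomology $H^i_{\HK}(Y_{\bar K})$ --- the $G_K$-action being irrelevant to the present statement. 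I would also record the elementary identification $K_0^{\unr}=\Q_p^{\unr}$ (because $K_0=W(k)[1/p]$ is already unramified over $\Q_p$), so that the coefficient field of the $(\varphi,N)$-module is the one fixed in the definition of quasi-purity above.

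It then remains to check that quasi-purity of weight $i$ does not depend on the choices made. For a fixed $Y/K$ it is literally the condition of \Cref{pwmc}, since the monodromy filtration of \cite[Proposition 1.6.1]{Del80} and the weights of $\varphi^a$ depend only on the underlying $(\varphi,N)$-module; and enlarging $K$ within $\bar{\Q}_p$ leaves this module unchanged up to rescaling $N$ by a nonzero scalar (the normalization $N^{(d)}=\frac{1}{d}N$ recalled in \Cref{GK2}), which does not alter the monodromy filtration. Assembling these observations, the present conjecture holds for $X$ if and only if \Cref{pwmc} holds for $Y$, and the two are therefore equivalent. The one step requiring genuine care is the threefold compatibility of the Hyodo--Kato constructions (motivic, Colmez--Nizio{\l}, Beilinson), but this has already been settled in \Cref{sec:HK}; granting the equivalence, \Cref{main thm} is precisely the present conjecture for $X$ a smooth scheme-theoretic complete intersection in a projective smooth toric variety, which is proved in the remainder of this section.
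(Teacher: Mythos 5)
You have read the situation correctly: this statement is a \emph{conjecture}, and the paper supplies no proof of it --- it merely asserts (``As noted in the introduction, we may give an equivalent formulation'') that it is a restatement of \Cref{pwmc}, and then proves it only for complete intersections in toric varieties in \Cref{thm:main}. So the only thing one could reasonably ``prove'' here is the equivalence of the two formulations, which is exactly what you set out to do, and your reductions (spreading out from $\bar{\Q}_p$ to a finite extension $K$, the identification $K_0^{\unr}=\Q_p^{\unr}$, and the observation that the normalization $N^{(d)}=\tfrac1d N$ does not alter the monodromy filtration, hence does not affect quasi-purity) are all correct and are precisely the bookkeeping the paper leaves implicit.

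The one step that deserves a caveat is the middle one: you attribute to \cite{CN19} (via \cite{Bei13}) the statement that the rigid-analytic Hyodo--Kato cohomology of $(Y_{\C_p})^{\an}$ recovers Beilinson's $H^i_{\HK}(Y_{\bar K})$ as a $(\varphi,N)$-module for \emph{every} proper smooth $Y/K$. That comparison is not established anywhere in this paper, and the cited sources only give it to you directly in the (semi)stable-reduction case, where both sides are the classical Hyodo--Kato cohomology of the log special fiber (\Cref{GKiscrys}, Beilinson's comparison). If you want an argument that stays within the paper's toolkit, note that a direct comparison for general $Y$ is not needed to get the \emph{equivalence of the conjectures}: by \cite[Theorem 6.5]{deJ96}, Poincar\'e duality (\Cref{poincare}, \Cref{dimax}) and the resulting direct-summand trick --- exactly the mechanism used in the proof of \Cref{wmc for tilted HK} and of \Cref{thm:main} --- quasi-purity for each of the two theories reduces to the case of varieties with semistable reduction, where the two theories coincide; hence each conjecture is equivalent to the semistable-case statement and so to the other. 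With that adjustment (or with a genuine reference for the analytification comparison), your equivalence argument is complete, and the rest of your proposal matches the paper's intent.
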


Let $\C_p^\flat$ denote the tilt of $\C_p$, which is the completion of an algebraic closure of the Laurent series field $F=\F_p\lcc p^\flat\rcc$ (\cite[Theorem 3.7]{scholze}). 
Let $\bar F$ %
denote the algebraic closure of $F$ inside $\C_p^\flat$. 
Let $R\Gamma_\HK^\flat$ denote the tilted Hyodo--Kato cohomology defined in \Cref{subsec:HK}. %

\begin{prop}\label{wmc for tilted HK}
Let $Z$ be a proper smooth variety over $\C_p^\flat$ defined over $\bar F$. 
Then the $(\varphi,N)$-module $H^{\flat,i}_\HK(Z^\an)$ satisfies the weight monodromy conjecture. 
\end{prop}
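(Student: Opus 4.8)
The plan is to reduce, by de Jong's theorem on semistable alterations together with a trace argument, to the case where $Z$ has semistable reduction, and then to conclude using the comparison isomorphism \Cref{cor:HKtilt} and the weight--monodromy theorem in equal characteristic due to Lazda--Pal \cite{LP16} (building on Crew \cite{crew}). Since $Z$ is defined over $\bar F$, after enlarging the field of definition I would first observe that $Z=Z_1\times_{F_1}\C_p^\flat$ for a geometrically connected smooth proper scheme $Z_1$ of some dimension $d$ over a finite extension $F_1$ of $F=\F_p\lcc p^\flat\rcc$ (connected components being handled separately). By de Jong's alteration theorem there are a finite extension $F_2/F_1$ and a projective alteration $h\colon Z'\to Z_1\times_{F_1}F_2$ with $Z'$ admitting a projective semistable model $\mf Z'$ over $\mcO_{F_2}$; then $Z'$ is smooth and proper over $F_2$ of dimension $d$, its log special fibre $Z'_0$ is a semistable log scheme over $\kappa^0$ with $\kappa$ the finite residue field of $F_2$, and I write $e$ for the ramification index of $F_2/F$. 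Base-changing to $\C_p^\flat$ and analytifying, $h$ yields a morphism of proper smooth rigid analytic varieties over $\C_p^\flat$, hence a $(\varphi,N)$-equivariant map $h^*\colon H^{\flat,i}_\HK(Z^\an)\to H^{\flat,i}_\HK(Z'^\an)$.

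Next I would show that $h^*$ is a split monomorphism of $(\varphi,N)$-modules. Viewing $Z$ and $Z'$ as algebraic motives over $\C_p^\flat$ and using that $R\Gamma^\flat_\HK$ on compact motives fits the abstract framework of \Cref{sec:HK} --- Poincaré duality (\Cref{poincare}) and the cycle-class formalism (\Cref{chern}, together with \Cref{nonvanishing} and \Cref{nonvanishing2}) --- one obtains a $(\varphi,N)$-equivariant Gysin map $h_*\colon H^{\flat,i}_\HK(Z'^\an)\to H^{\flat,i}_\HK(Z^\an)$ for which, by the projection formula, $h_*h^*$ is multiplication by $h_*(1)=\deg(\kappa(Z')/\kappa(Z))\in H^{\flat,0}_\HK(Z^\an)=\Q_p^\unr$, a nonzero integer. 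Since we work with $\Q$-coefficients, $\tfrac1{\deg h}h_*$ retracts $h^*$, so $H^{\flat,i}_\HK(Z^\an)$ is a direct summand of $H^{\flat,i}_\HK(Z'^\an)$ as a $(\varphi,N)$-module; hence its monodromy filtration is induced from that of $H^{\flat,i}_\HK(Z'^\an)$, and $\gr^M_j H^{\flat,i}_\HK(Z^\an)$ is a $\varphi$-module direct summand of $\gr^M_j H^{\flat,i}_\HK(Z'^\an)$.

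It therefore suffices to prove quasi-purity of weight $i$ for $H^{\flat,i}_\HK(Z'^\an)$, i.e. the semistable case. For this I would apply \Cref{cor:HKtilt} to $\mf Z'$, obtaining a canonical isomorphism of $(\varphi,N)$-modules $H^{\flat,i}_\HK(Z'^\an)\cong H^{(e),i}_\HK(Z'_0/W(\kappa))\otimes_{W(\kappa)[1/p]}\Q_p^\unr$. Rescaling the monodromy operator by $1/e$ leaves the monodromy filtration unchanged, and extension of scalars along the unramified inclusion $W(\kappa)[1/p]\hookrightarrow\Q_p^\unr$ commutes with the monodromy filtration and preserves purity of $\varphi$-modules by definition; so it is enough to know that the classical Hyodo--Kato cohomology $H^i_\HK(Z'_0/W(\kappa))$ of the proper semistable log scheme $Z'_0$ over the finite field $\kappa$ is quasi-pure of weight $i$. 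This $(\varphi,N)$-module agrees with the one attached by Lazda--Pal to the generic fibre of $\mf Z'$ over the equicharacteristic local field $\kappa\lcc\varpi\rcc$, and the required quasi-purity is precisely the equal-characteristic weight--monodromy theorem of Crew \cite{crew} and Lazda--Pal \cite{LP16}; this completes the argument.

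I expect the only real content beyond bookkeeping to be the comparison \Cref{cor:HKtilt}, which is exactly where the constructions of the earlier sections pay off: once the tilted Hyodo--Kato cohomology of $Z'^\an$ is identified with the classical, equal-characteristic Hyodo--Kato cohomology of its special fibre, everything reduces to the familiar alteration-and-trace mechanism. The one point requiring care is that all the maps involved ($h^*$, the Gysin map $h_*$, and the comparison isomorphism) are compatible with the $(\varphi,N)$-structure, so that the monodromy filtration and the $\varphi$-weights are transported faithfully; note also that, unlike \Cref{cor:HKtilt} itself, this argument uses $Z$ only up to a smooth alteration and hence imposes no reduction hypothesis on $Z$.
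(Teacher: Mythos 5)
Your proposal matches the paper's proof: after descending $Z$ to a finite extension of $F$, use de Jong's alteration theorem together with Poincar\'e duality and the trace/degree argument to reduce to the case of semistable reduction, then apply \Cref{cor:HKtilt} to identify $H^{\flat,i}_\HK$ with classical Hyodo--Kato cohomology over $W(\kappa)$ and conclude by the equal-characteristic weight--monodromy theorem of Lazda--Pal. The only difference is that you unpack the ``alteration makes cohomology a $(\varphi,N)$-equivariant direct summand'' step in detail (via the Gysin map and projection formula), whereas the paper compresses it into a one-line appeal to Poincar\'e duality, \Cref{dimax}, and the analogous lemmas in Kedlaya and Lazda--Pal.
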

\begin{proof}
 We may assume that $Z$ is the base change of a proper smooth variety $Z'$ over a finite extension $E$ of $F$ inside $\bar F$. 
 Let $k$ denote the residue field of $E$.  By \cite[Theorem 6.5]{deJ96} and Poincar\'e duality (\Cref{poincare} and \Cref{dimax})  we may and do assume that $Z'$ has semistable reduction over $\mcO_E$ {(cf.\ \cite[Lemma 7.1.2]{ked-thesis}, see also \cite[Lemma 5.36]{LP16})}. 
 
Let $Z'_0$ denote the special fiber of a semistable model of $Z'$ over $\mc O_E$ equipped with the natural log structure.  
Then, by \Cref{cor:HKtilt}, the $(\varphi,N)$-module $H^{\flat,i}_\HK(Z^\an)$ is isomorphic to the $(\varphi,N)$-module $H^i_{\HK}(Z'_0/W(k)^0)\otimes_{W(k)[1/p]} \Q_p^\unr$ (with twisted monodromy operator). The latter is quasi-pure of weight $i$ by  \cite[Theorem 5.33, Theorem 5.46]{LP16}. 
\end{proof}

As in \cite[Section 8]{scholze}, we use the notation $X_{\Sigma,\kappa}$ for a toric variety $X_\Sigma$ over a field $\kappa$ to clarify the ground field  when necessary.  %
For a perfectoid field $L$, we use the notations $\mc X^\an_{\Sigma,L}$, $\mc X^\perf_{\Sigma,L}$ introduced in \cite[Paragraph after 8.4]{scholze} (there the notation $(-)^\ad$ is used instead of $(-)^\an$). 
Then as noted there, if $X_{\Sigma,L}$ is proper, then $\mc X^\an_{\Sigma,L}$ is identified with the analytification $X^\an_{\Sigma,L}$ of $X_{\Sigma,L}$. 
Recall from \cite[Theorem 8.5]{scholze} that the tilt of $\mc X^\perf_{\Sigma,L}$ is canonically isomorphic to the perfectoid space $\mc X^\perf_{\Sigma,L^\flat}$ for %
the same fan $\Sigma$ defining $X_{\Sigma,L}$. 

The following is a motivic analogue of \cite[Proposition 8.6]{scholze}. 
\begin{lemma}
Let $L$ be a perfectoid field and $X_{\Sigma,L}$ be a smooth toric variety. 
Then we have $\Q_{L^\flat}(\mc X^\an_{\Sigma,L^\flat})^\sharp\cong \Q_{L}(\mc X^\an_{\Sigma,L})$, where $(-)^\sharp$ denotes the equivalence  $\RigDA(L^\flat)\stackrel{\sim}{\to}\RigDA(L)$ from \cite[Theorem 7.26]{vezz-fw}.
\end{lemma}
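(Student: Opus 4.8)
The plan is to reduce the statement to the motivic tilting equivalence together with its compatibility with the logarithmic Monsky--Washnitzer functor, exactly as Scholze reduces \cite[Proposition 8.6]{scholze} to the tilting equivalence of étale sites. First I would recall that for a smooth toric variety $X_{\Sigma,L}$ over a perfectoid field $L$, the analytic space $\mc X^\an_{\Sigma,L}$ is covered by the analytifications of the torus-invariant affine opens $U_\sigma$, which are of the form $\Spec L[\sigma^\vee\cap M]$; each such $U_\sigma^\an$ is the generic fiber of the smooth affine formal scheme $\Spf\mcO_L\langle\sigma^\vee\cap M\rangle$ over $\mcO_L$. In particular $\mc X^\an_{\Sigma,L}$ has an admissible cover by generic fibers of smooth formal schemes over $\mcO_L$, all of whose intersections are of the same type (being again torus-invariant opens). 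Hence $\Q_L(\mc X^\an_{\Sigma,L})$ lies in $\RigDA_{\gr}(L)$ and, by étale (Čech--Mayer--Vietoris) descent, is computed as the colimit over the Čech nerve of this cover of the motives $\xi_L\Q(U_{\sigma,s})$ attached, via the Monsky--Washnitzer functor $\xi_L\colon\DA(k)\to\RigDA(L)$, to the special fibers $U_{\sigma,s}$ over the residue field $k$.

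The key point is that this entire description is intrinsic to the special fibre: the special fibres $U_{\sigma,s}$ and their gluing data depend only on the fan $\Sigma$ and on $k$, not on whether the mixed characteristic (untilted) or equal characteristic (tilted) lift is chosen. Concretely, by \Cref{prop:RigDAgr1}(4) the tilting equivalence $(-)^\sharp\colon\RigDA(L^\flat)\xrightarrow{\sim}\RigDA(L)$ restricts to an equivalence $\RigDA_{\gr}(L^\flat)\xrightarrow{\sim}\RigDA_{\gr}(L)$, and the latter is identified on both sides with modules over $\chi 1$ in $\DA(k)$ (using that the residue field of $L$ and of $L^\flat$ coincide); under these identifications the Monsky--Washnitzer functors $\xi_{L^\flat}$ and $\xi_L$ correspond to the \emph{same} functor $\DA(k)\to\Mod_{\chi1}(\DA(k))$, i.e.\ $(\xi_{L^\flat}M)^\sharp\cong\xi_L M$ naturally in $M\in\DA(k)$. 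This is precisely the good-reduction compatibility established in \cite{vezz-tilt4rig} and recorded in \Cref{prop:comm0}. Applying this to each $M=\Q(U_{\sigma,s})$ and to all the gluing maps, and using that $(-)^\sharp$ preserves colimits, I obtain
\[
\Q_{L^\flat}(\mc X^\an_{\Sigma,L^\flat})^\sharp\;\cong\;\bigl(\colim_{[n]}\xi_{L^\flat}\Q(U_{\bullet,s})\bigr)^\sharp\;\cong\;\colim_{[n]}\xi_{L}\Q(U_{\bullet,s})\;\cong\;\Q_L(\mc X^\an_{\Sigma,L}),
\]
which is the claim. Here I use \cite[Theorem 8.5]{scholze} to guarantee that the cover of $\mc X^\an_{\Sigma,L^\flat}$ by toric charts is the tilt of the cover of $\mc X^\an_{\Sigma,L}$, so that the Čech nerves match up.

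The main obstacle, and the step requiring the most care, is verifying that the comparison of Monsky--Washnitzer functors $(\xi_{L^\flat}-)^\sharp\cong\xi_L(-)$ is \emph{natural} enough to be applied simplicially to an entire Čech nerve rather than to a single object: one must know that the canonical invertible natural transformation of \Cref{prop:comm0} is compatible with the restriction/pullback functors along the open immersions $U_\sigma\hookrightarrow\mc X_\Sigma$, i.e.\ that the diagram is one of functors into $\RigDA(-)$ over the small site of $\mc X_\Sigma$ and not merely over the point. This is where the hyperdescent of $\mfS\mapsto\logFDA^{(\mr v)}(\mfS)$ (\Cref{adm}) and of $\RigDA$ enters: both sides of the desired isomorphism are values of hypersheaves of categories on $\mc X_{\Sigma,\et}$, the comparison is an equivalence of such hypersheaves (being so locally, on toric charts, by \Cref{prop:comm0}), and hence it is an equivalence globally. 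Alternatively, and perhaps more cleanly, one can bypass the explicit cover entirely: since $\Q_L(\mc X^\an_{\Sigma,L})$ is compact and lies in $\RigDA_{\gr}(L)=\Mod_{\chi1}(\DA(k))$, it corresponds to a specific object of $\DA(k)$ — namely (a $\chi1$-module structure on) $\chi_L\Q_L(\mc X^\an_{\Sigma,L})$ — and one checks this object is independent of the lift because the nearby-cycle/reduction of the toric chart cover is; then \Cref{prop:comm0} gives the result directly. I would present the cover-based argument as the primary one since it is the most transparent translation of Scholze's proof.
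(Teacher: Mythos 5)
Your proposal is correct but takes a genuinely different route from the paper's. The paper invokes the factorization of the tilting equivalence through perfectoid motives $\PerfDA(L^\flat)=\PerfDA(L)$ (vezz-fw Theorem 6.9), identifies the image of $\Q_{L^\flat}(\mcX^\an_{\Sigma,L^\flat})$ with the motive of the perfectoid space $\mcX^{\perf\sharp}_{\Sigma,L^\flat}=\varprojlim_\varphi\mcX^\an_{\Sigma,L}$, and then uses vezz-fw Proposition 5.4 ("presentations of good reduction") to show the projection to $\mcX^\an_{\Sigma,L}$ is a motivic equivalence. You instead bypass the perfectoid space entirely: you express $\Q_L(\mcX^\an_{\Sigma,L})$ via the Monsky--Washnitzer functor $\xi_L$ from $\DA(k)$, and reduce everything to the MW--tilting compatibility $(\xi_{L^\flat}M)^\sharp\cong\xi_L M$ (vezz-tilt4rig / the non-log case of \Cref{prop:comm0}). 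Both are sound, and the underlying machinery is of course closely related, but the key cited results differ. Two smaller remarks. First, the \v{C}ech cover by toric charts is unnecessary overhead: since $\mcX^\an_{\Sigma,L}$ is by definition (Scholze's $\mcX^\ad_{\Sigma,L}$) the rigid generic fiber of the smooth formal model $\mfX_{\Sigma,\mcO_L}$, one has directly $\Q_L(\mcX^\an_{\Sigma,L})\cong\xi_L\Q(X_{\Sigma,k})$ for the single object $\Q(X_{\Sigma,k})\in\DA(k)$, and the MW--tilting compatibility applied once finishes the proof; your closing "cleaner" variant is essentially this. Second, the naturality worry you raise is overstated: the comparison is a natural transformation of functors $\DA(k)\to\RigDA(-)$, so it automatically commutes with every morphism in the simplicial diagram $\Q(U_{\bullet,s})$, and no hypersheaf-of-categories argument is needed — ordinary colimit-preservation of $\xi$ and $(-)^\sharp$ suffices.
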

\begin{proof}
	Via the equivalence $\RigDA(L^\flat)\cong\PerfDA(L^\flat)=\PerfDA(L)$ of \cite[Theorem 6.9]{vezz-fw} (see the proof of \cite[Theorem 3.8]{BV} on how to omit the $\Frob$-localization) the motive of $\mcX^{\an}_{\Sigma,L^\flat}$ is sent to that of $\mcX^{\perf\sharp}_{\Sigma,L^\flat}$. Note that $\mc X^\an_{\Sigma,L}$ has a  smooth formal model $\mf X_{\Sigma,\mc O_L}$ over $\mcO_{L}$ and that $\varprojlim_\varphi \mc X^\an_{\Sigma,L}$ is (locally) a presentation of good reduction in the sense of \cite[Definition 2.4]{vezz-fw}. %
	 Then, by \cite[Proposition 5.4]{vezz-fw}, we deduce that the projection gives an equivalence  $\Q_{L}(\mc X_{\Sigma,L^\flat}^{\perf\sharp})=\Q_{L}(\varprojlim_\varphi \mc X^\an_{\Sigma,L})\cong \Q_{L}(\mc X^\an_{\Sigma,L})$.
\end{proof}

\begin{cons}\label{map of motives}
Let $L$ be a perfectoid field and $X_{\Sigma,L}$ be a smooth toric variety. 
Recall from \cite[Theorem 8.5.(iii)]{scholze} that we have a natural continuous map $\pi\colon|\mc X^\an_{\Sigma,L^\flat}|\cong|\mc X^{\perf}_{\Sigma,L}|\to|\mc X^\an_{\Sigma,L}|$. 
Let $V\subset \mc X_{\Sigma,L}$ be an open adic subspace. 
We consider the inverse images $V^\perf\subset \mc X^\perf_{\Sigma,L}$ and $\pi^{-1}(V)\subset \mc X^\an_{\Sigma,L^\flat}$ viewed as open adic subspaces.  
	Then the morphism %
	$V^{\perf}\to V$ of adic spaces induces a natural morphism %
	$j^*\Q(V^{\perf})\to \iota^*\Q(V)\to \iota^*\Q(\mc X^\an_{\Sigma,L})$ in $\wRigDA(L)$ (with the notation in \cite[Page 40]{vezz-fw}), and hence the following commutative diagram in $\RigDA(L)$
	\be
	\xymatrix{
		\Q(\mc X_{\Sigma,L^\flat}^\an)^\sharp\ar@{=}[r] & \Q(\mc X^\an_{\Sigma,L})\\
		\Q(\pi^{-1}(V))^\sharp\ar[u]\ar[r] &  \Q(V).\ar[u]\\
	}
	\ee
\end{cons}

We have now all the ingredients to adapt %
Scholze's %
proof to the $p$-adic setting. %

\begin{thm}\label{thm:main}
Let $\C_p$ be the completion of an algebraic closure of $\Q_p$ 
	and let $Y$ be a   smooth variety over $\C_p$ which is a scheme-theoretic complete intersection inside a projective smooth toric variety $X_{\Sigma,\C_p}$. 	Then the $p$-adic weight monodromy conjecture holds for $Y$, that is, the $(\varphi,N)$-module $H^i_\HK(Y^\an)$ is quasi-pure of weight $i$.
\end{thm}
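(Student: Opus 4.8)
The plan is to transpose Scholze's argument to the motivic Hyodo--Kato realization. Write $C=\C_p$, $C^\flat=\C_p^\flat$, $F=\F_p\lcc p^\flat\rcc$, and $d=\dim Y$. We may assume $d\ge1$, since for $d=0$ the module $H^0_\HK(Y^\an)$ is a finite sum of copies of $K_0^\unr$ with $N=0$, hence pure of weight $0$; and we may assume $Y$ geometrically connected, hence geometrically irreducible as it is smooth. The goal is to exhibit $H^i_\HK(Y^\an)$ as a $(\varphi,N)$-module direct summand of $H^{\flat,i}_\HK(Z^\an)$ for a suitable proper smooth variety $Z$ over $C^\flat$ defined over $\bar F$; once this is done, \Cref{wmc for tilted HK} finishes the proof, because the monodromy filtration is functorial and additive and purity of $\varphi$-modules passes to direct summands.

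\textbf{Constructing $Z$ and the comparison map.} First I would invoke \Cref{hubermot2} to pick $\e$ small enough that the inclusion $Y^\an\hookrightarrow Y(\e)$ induces an equivalence $\Q_C(Y^\an)\xrightarrow{\sim}\Q_C(Y(\e))$ in $\RigDA(C)$, where $Y(\e)\subset\mc X^\an_{\Sigma,C}=X^\an_{\Sigma,C}$ is the tube cut out locally by $|f_i|\le\e$. Since Step (B) of Scholze's recipe is purely geometric, \cite[Proposition 8.7]{scholze} supplies an algebraic subvariety $W\subset X_{\Sigma,C^\flat}$ defined over $\bar F$, namely an irreducible component of dimension $d$ of $V(g_1)\cap\cdots\cap V(g_c)$ for suitable $g_i\in H^0(X_{\Sigma,C^\flat},\mcO(D_i))$, such that $W^\an\subset\pi^{-1}(Y(\e))$ for the tilting map $\pi\colon|\mc X^\an_{\Sigma,C^\flat}|\to|\mc X^\an_{\Sigma,C}|$ of \Cref{map of motives}. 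By de Jong \cite{deJ96} I choose a projective alteration $Z\to W$ with $Z$ smooth and geometrically irreducible, so $Z$ is proper smooth of dimension $d$ over $C^\flat$, defined over $\bar F$. Feeding $V=Y(\e)$ into \Cref{map of motives}, and using the morphism of adic spaces $Z^\an\to W^\an\subset\pi^{-1}(Y(\e))$ over $C^\flat$ together with the tube equivalence, I obtain in $\RigDA(C)$ a composite
\be
\Q_{C^\flat}(Z^\an)^\sharp\longrightarrow\Q_{C^\flat}(\pi^{-1}(Y(\e)))^\sharp\longrightarrow\Q_C(Y(\e))\cong\Q_C(Y^\an),
\ee
call it $\phi$, which by \Cref{map of motives} is compatible with the natural maps down to $\Q_{C^\flat}(\mc X^\an_{\Sigma,C^\flat})^\sharp=\Q_C(\mc X^\an_{\Sigma,C})$. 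Since $\phi$ is built from morphisms of (adic) spaces and canonical isomorphisms, it is a morphism of coalgebra objects; applying the monoidal realization $R\Gamma_\HK$ on compact motives (\Cref{kunneth}) and using $R\Gamma_\HK\circ(-)^\sharp=R\Gamma^\flat_\HK$, it produces a morphism of algebra objects $\rho\colon R\Gamma_\HK(Y^\an)\to R\Gamma^\flat_\HK(Z^\an)$ in $\mcD_{\varphi,N}(K_0^\unr)$, hence $(\varphi,N)$-equivariant ring homomorphisms $\rho^i\colon H^i_\HK(Y^\an)\to H^{\flat,i}_\HK(Z^\an)$.

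\textbf{Top degree and the splitting.} Next I would show $\rho^{2d}$ is an isomorphism. By the lemma preceding \Cref{map of motives} and $R\Gamma^\flat_\HK=R\Gamma_\HK\circ(-)^\sharp$, the algebraic Hyodo--Kato cohomologies of $X_{\Sigma,C}$ and of $X_{\Sigma,C^\flat}$ are canonically identified, and the compatibility of $\phi$ recorded above makes $\rho^{2d}$ the right-hand side of a commutative square whose horizontal maps are the restrictions $r_Y\colon H^{2d}_\HK(X^\an_{\Sigma,C})\to H^{2d}_\HK(Y^\an)$ and $r_Z\colon H^{\flat,2d}_\HK(X^\an_{\Sigma,C^\flat})\to H^{\flat,2d}_\HK(Z^\an)$. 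Now $Y\hookrightarrow X_{\Sigma,C}$ is a closed immersion with $[Y]=c_1(\mcO(D_1))\cdots c_1(\mcO(D_c))$ a positive cycle, and the composite $h\colon Z\to W\hookrightarrow X_{\Sigma,C^\flat}$ is proper with scheme-theoretic image $W$ of dimension $d$, so $h_![Z]=\deg(\bar F(Z)/\bar F(W))\cdot[W]$ is positive by \Cref{nonvanishing2}; applying \Cref{nonvanishing} to these two morphisms (the algebraic Hyodo--Kato realizations fit the abstract framework of that subsection) gives $r_Y\ne0$ and $r_Z\ne0$. Since $Y$ and $Z$ are proper smooth geometrically connected of dimension $d$, the dimension axiom (\Cref{dimax}) identifies $H^{2d}_\HK(Y^\an)(d)$ and $H^{\flat,2d}_\HK(Z^\an)(d)$ with $K_0^\unr$, so $r_Y,r_Z$ are surjective and the commutative square forces $\rho^{2d}$ to be a surjection of one-dimensional spaces, hence an isomorphism of $(\varphi,N)$-modules. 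Finally, by Poincar\'e duality (\Cref{poincare}) the objects $R\Gamma_\HK(Y^\an)$ and $R\Gamma^\flat_\HK(Z^\an)$ are strongly dualizable with duals their $(d)[2d]$-twists; since $\rho$ is a ring map and $\rho^{2d}$ matches the top cohomologies, the Poincar\'e pairings of $Y$ and $Z$ are compatible with $\rho$, so the Poincar\'e transpose of $\rho^{2d-i}$ is a $(\varphi,N)$-equivariant retraction of $\rho^i$ up to a nonzero scalar. Thus $H^i_\HK(Y^\an)$ is a $(\varphi,N)$-module direct summand of $H^{\flat,i}_\HK(Z^\an)$, and \Cref{wmc for tilted HK} concludes.

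\textbf{Where the difficulty lies.} With all the machinery of the previous sections in hand, no step is a genuinely new obstruction; the real work is the compatibility bookkeeping in the middle step — showing that $\phi$ intertwines the inclusions of $Y$ and $Z$ into the two toric varieties and the canonical identification of their (combinatorially invariant) Hyodo--Kato cohomologies — so that the top-degree non-vanishing can be read off from \Cref{nonvanishing} applied over $C$ and over $C^\flat$ separately. A second point needing care is that $\phi$ is a morphism of coalgebra objects, which is precisely what lets Poincar\'e duality turn the isomorphism $\rho^{2d}$ into a retraction of $\rho^i$ in every degree.
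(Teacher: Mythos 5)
Your proof is correct and follows essentially the same route as the paper's: motivic tubular neighborhood via \Cref{hubermot2}, Scholze's approximation and a de Jong alteration, the comparison map of \Cref{map of motives} realized under $R\Gamma_\HK$, non-vanishing in top degree via \Cref{nonvanishing}--\Cref{dimax}, Poincar\'e duality to produce a $(\varphi,N)$-equivariant splitting, and \Cref{wmc for tilted HK} to conclude. The only cosmetic differences are that you also verify $r_Y\neq 0$, which is not needed (the factorization of the nonzero map $H^{2d}_\HK(X^\an_{\Sigma,\C_p})\to H^{\flat,2d}_\HK(Z'^\an)$ through $\alpha^{2d}$ already forces $\alpha^{2d}\neq 0$), and that you spell out the coalgebra/ring-map compatibility that the paper leaves implicit in the duality step.
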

\begin{proof}%
By \Cref{hubermot2}, there is an open neighborhood $\widetilde{Y}$ of $Y^\an$ inside $X^\an_{\Sigma,\C_p}$ whose motive coincides with that of  $Y^\an$. We let $\pi^{-1}(\widetilde{Y})\subset X^\an_{\Sigma,\C_p^\flat}$ be the inverse image of $\widetilde{Y}$ via the continuous map $\pi\colon|X^\an_{\Sigma,\C_p^\flat}|\to|X^\an_{\Sigma,\C_p}|$. %
By  \cite[Corollary 8.8]{scholze}, we can find an irreducible closed algebraic subvariety $Z$ of $X_{\Sigma,\C_p^\flat}$ defined over the algebraic closure of $\F_p\lcc p^\flat\rcc$ in $\C_p^\flat$ such that $\dim Z=\dim Y$ and its analytification lies in $\pi^{-1}(\widetilde{Y})$. 
We use \cite[Theorem 4.1]{deJ96} to take a smooth alteration $Z'$ of $Z$. %

	Then \Cref{map of motives} gives us the following commutative square in $%
	\RigDA(\C_p)$
\be
	\xymatrix{
		\Q(X_{\Sigma,\C_p^\flat}^{{\an}})^\sharp\ar@{=}[r] & \Q(X_{\Sigma,\C_p}^{{\an}})\\
		\Q({Z'}^\an)^\sharp\ar[u]\ar[r]& \Q(Y^\an).\ar[u]
	}
	\ee
	Applying the realization $R\Gamma_{\HK}$, we obtain a natural morphism
	\be
	\alpha\colon R\Gamma_\HK(Y^\an)\to R\Gamma_\HK^\flat({Z'}^\an)
	\ee
	in $\mcD_{\varphi,N}({\Q_p^\unr})$. %
Let $d$ denote $\dim Y=\dim Z'$ and for an integer $i$, let $\alpha^i$ denote the map on cohomology $H^i_{\HK}(Y^{\an})\to H^{\flat, i}_{\HK}(Z'^{\an})$ induced by $\alpha$. By \Cref{nonvanishing} and \Cref{nonvanishing2}, the canonical map $H^{2d}_\HK(X^\an_{\Sigma,\C_p})\cong H^{\flat,2d}_{\HK}(X^\an_{\Sigma,\C_p^\flat})\to H^{\flat,2d}_{\HK}({Z'}^\an)$ can't be zero. In particular, the map $\alpha^{2d}\colon H^{2 d}_{\HK}(Y^\an)\to H^{\flat,2 d}_{\HK}({Z'}^\an)$ is not zero. Since   both sides of $\alpha^{2d}$ are one-dimensional (\Cref{dimax}) we deduce that $\alpha^{2d}$ is an isomorphism.

By Poincar\'e duality (\Cref{poincare}) we deduce that the dual of $\alpha^{2d-i}$ with respect to $H^{2d}_\HK(Y^\an)\cong H^{\flat,2d}_\HK({Z'}^\an)$ gives a $(\varphi,N)$-equivariant splitting of $\alpha^i$, and hence $H^i_{\HK}(Y^\an)$ is a direct summand of $H^{\flat,i}_{\HK}({Z'}^\an)$. Thus, the assertion follows from  \Cref{wmc for tilted HK}. 
\end{proof}

\begin{rmk}
	By taking the motivic $\ell$-adic realization instead of the $p$-adic one, the proof above coincides with (a motivic version of) Scholze's proof of the $\ell$-adic weight monodromy conjecture for scheme-theoretical complete intersections in toric varieties. For a motivic rigid analytic $\ell$-adic realization functor, one can use \cite{BV}.
\end{rmk}

\begin{rmk}\label{geoconn}
	If $Y$ is a   complete intersection of positive dimension in a smooth and projective toric variety $X_\Sigma$, then $Y$ is geometrically connected. This can be deduced from the Grothendieck-Lefschetz theorem \cite[Corollaire XII.3.5]{SGA2} (we recall that a complete intersection is Cohen-Macaulay by e.g. \cite[Lemma 00SB]{stacks-project}). 
\end{rmk}

\appendix
\section{A log-free Hyodo--Kato cohomology}\label{sec:bond}
In this appendix, we discuss a motivic way to equip %
the {$K_0$-}overconvergent de Rham realization with a $(\varphi,N)$-structure %
which does not make any reference to the Hyodo--Kato isomorphism or log structures\footnote{{An expanded version of this section can be found in \cite{BGV}}}. 
This procedure can be done in the $\ell$-adic realization as well, see \cite[Section 11]{ayoub-etale}.%

The basic idea is to note that the graded pieces with respect to the weight filtration on the de Rham realization factors over the motivic nearby cycle functor $\Psi$, giving a formula $\gr_k^{\omega}H^*_{\dR,K_0}(X)\cong\gr_k^\omega H^*_{\rig}(\Psi(X))$. Since the functor $\Psi$ is equipped with a monodromy operator $N$, this is also reflected on the (weight filtration of the) cohomology groups.  This is in accordance with the formulas of \cite{mokrane}. %

Let the notations be as in the beginning of \Cref{subsec:HK}. 
In particular, $K$ is a complete discrete valuation field of mixed characteristic with perfect residue field $k$ and $K_0$ is the subfield $W(k)[1/p]$. 
As in \Cref{K_0 str}, we consider the $K_0$-overconvergent de Rham realization $R\Gamma_{\dR,K_0}\colon\RigDA_\gr(K)\cong \RigDA_\gr(K_0)\to \mc D(K_0)$. 
We propose to define the $(\varphi,N)$-structure on it purely in terms of the generic fiber. 
In what follows, we will construct a Frobenius structure on the complex and a monodromy operator on the level of graded pieces of the weight filtration on each cohomology group. 

\subsection{Motivic nearby cycles}
We first recall how the motivic nearby cycle functor $\Psi$ is defined on rigid analytic motives (of good reduction) following Ayoub. Recall that $\RigDA_{\gr}(K)$ is canonically equivalent to the category of $\chi1$-modules in $\DA(k)$ (\cite[Theorem 3.3.3(1)]{AGV}) and that it  contains the motives of varieties with pluri-nodal reduction see \Cref{ssisgr}.  
We choose $p$ as a uniformizer of $K_0$ to fix an identification $\chi1\cong1\oplus1(-1)[-1]$, i.e., via $\chi1\cong\chi_01$.  
We consider the augmentation morphism $\chi1\cong1\oplus1(-1)[-1]\to1$ in $\DA(k)$ corresponding to $(1\,\, 0)$, which is the only one algebra morphism. In light of \cite[Scholie 1.3.26]{ayoub-rig}, we give the following definition.
\begin{dfn}
The motivic nearby cycle is the functor \be\Psi\colon\RigDA_{\gr}(K)\cong\Mod_{\DA(k)}(\chi1)\to\Mod_{\DA(k)}(1)\cong\DA(k).\ee induced by the canonical augmentation $\chi1\cong1\oplus1(-1)[-1]\to1$. 
\end{dfn}

\begin{prop}\label{motivic nearby cycle}
If $M$ lies in $\RigDA_{\gr}(K)^{\ct}$, the object $\Psi M$ is canonically equipped with a morphism $N\colon \Psi M\to \Psi M(-1)$ in $\DA(k)$. {More precisely,} the functor $\Psi$ factors as
		\be\Psi\colon\RigDA_{\gr}(K)^{\ct}\cong\Mod_{\DA(k)}(\chi1)^{\ct}{\to}\DA(k)_N^{\ct}\to\DA(k)^{\ct},\ee
		where the category $\DA(k)_N$ is informally given by ind-nilpotent maps  $N\colon M\to M(-1)$ i.e.,  comodules over the coalgebra $\bigoplus\Q(-n)$\footnote{See \cite[\S 2]{BGV}}.
\end{prop}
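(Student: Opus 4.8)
The plan is to read the monodromy operator directly off the commutative-algebra structure of $\chi1$. Recall from \Cref{rmk:UDA} (via \cite[Scholie~1.4.2, Subsection~1.5.3, Th\'eor\`eme~2.3.75]{ayoub-th1}) that the chosen uniformiser identifies $\chi1$ with $q_*1\cong 1\oplus 1(-1)[-1]$, and that this is the \emph{trivial square-zero extension} of $1$ by the augmentation ideal $I=1(-1)[-1]$: the vanishing of $H^2(\G_{m,k})$ forces the multiplication $I\otimes_{\chi1}I\to\chi1$ to be null. Hence the $\chi1$-module structure on $I$ factors through $\epsilon\colon\chi1\to1$, so that in $\Mod_{\DA(k)}(\chi1)$ there is a canonical cofibre sequence
\be
1(-1)[-1]\longrightarrow\chi1\stackrel{\epsilon}{\longrightarrow}1,
\ee
in which the two outer terms carry their $\chi1$-module structure via $\epsilon$.

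Next I would tensor this cofibre sequence over $\chi1$ with a $\chi1$-module $M$. Using $\chi1\otimes_{\chi1}M\simeq M$, $1\otimes_{\chi1}M=\Psi M$ and $(1(-1)[-1])\otimes_{\chi1}M\simeq\Psi M(-1)[-1]$ (Tate twists and shifts being invertible and commuting with the relative tensor product), this yields a cofibre sequence in $\DA(k)$
\be
\Psi M(-1)[-1]\longrightarrow M\longrightarrow\Psi M,
\ee
and I define $N\colon\Psi M\to\Psi M(-1)$ to be its connecting map. Functoriality in $M$ is automatic: the whole diagram is obtained by applying the three functors $I\otimes_{\chi1}(-)$, $\chi1\otimes_{\chi1}(-)$, $1\otimes_{\chi1}(-)$ and the natural transformations relating them to $M$, and forming the connecting map of a functorial cofibre sequence is functorial. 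This already factors $\Psi$ through $\DA(k)_N$, an object of which is a motive together with such a self-map twisted by $\Q(-1)$.

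Finally, to match this with the comodule description (and to see the ``local nilpotence'' that makes the datum of $N$ a genuine comodule structure in the compact case) I would argue as follows. For any morphism of commutative algebras $R\to R'$ and any $R$-module $M$, the base change $R'\otimes_RM$ is naturally a comodule over the coalgebra $R'\otimes_RR'$ (comultiplication via the middle unit $R'\otimes_RR'\to R'\otimes_RR'\otimes_RR'$, counit via the multiplication). Applied to $\epsilon\colon\chi1\to1$ this makes $\Psi M=1\otimes_{\chi1}M$ a comodule over $\mcC:=1\otimes_{\chi1}1$. Since $\chi1$ is the trivial square-zero extension, all face maps of the two-sided bar construction computing $\mcC$ vanish except the identities, so $\mcC\simeq\bigoplus_{n\ge0}(I[1])^{\otimes n}\simeq\bigoplus_{n\ge0}\Q(-n)$ with its deconcatenation comultiplication; that is, $\mcC$ is the cofree conilpotent coalgebra on $I[1]\simeq\Q(-1)$, whose comodules are precisely motives equipped with a locally nilpotent operator $P\to P(-1)$, which one checks is the connecting map above. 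The restriction to compact objects enters exactly here: $\Psi$ preserves compact objects, its right adjoint being restriction of scalars along $\chi1\to1$, which preserves all colimits; so for $M$ compact the coaction $\Psi M\to\bigoplus_n\Psi M(-n)$ factors through a finite sub-sum, equivalently $N$ is nilpotent, and the comodule structure is exactly the datum of $N$. The only non-formal ingredient is Ayoub's identification of $\chi1$ with the \emph{trivial} square-zero extension (used both to place the first cofibre sequence in $\Mod_{\DA(k)}(\chi1)$ and to make the bar complex degenerate); the rest is bookkeeping, so I expect that identification to be the single point deserving care.
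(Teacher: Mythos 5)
Your construction is genuinely different from the paper's. The paper proves this by identifying $\RigDA_{\gr}(K)^{\ct}$ with $\UDA(k)^{\ct}\subset\DA(\G_{m,k})^{\ct}$, observing that $\Psi$ then becomes the fiber functor $1^*$ at the unit section, identifying this with Ayoub's $\Upsilon_{\id}$ via \cite[Scholie 1.3.26.2]{ayoub-rig}, and invoking \cite[Th\'eor\`eme 11.16]{ayoub-etale} for the existence of the monodromy operator. You instead read $N$ off directly from the algebra structure of $\chi1$, as a connecting map of a cofiber sequence and then as a comodule structure over the bar coalgebra $1\otimes_{\chi1}1$. Your route is more self-contained and makes the comodule description in the statement transparent; the paper's route is shorter at the cost of outsourcing all the content to Ayoub. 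If both constructions can be completed, it is a separate (and non-trivial, though plausible via bar/cobar duality) exercise to check that they produce the \emph{same} operator $N$, which matters since both are invoked as ``canonical.''

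The gap you flag is real, and I would not characterize it as merely ``a point deserving care.'' You argue that vanishing of $H^2(\G_{m,k})$ makes $\chi1\cong q_*1$ the \emph{trivial} square-zero extension of $1$ by $\Q(-1)[-1]$. Vanishing of $\pi_0$ of the multiplication $\Q(-2)[-2]\to\chi1$ (which with $\Q$-coefficients does hold, the only obstruction $\{t,-1\}$ being $2$-torsion) gives a single nullhomotopy, but your argument needs much more: you use the triviality to give $I$ the $\epsilon$-restricted module structure in your first cofiber sequence, and you need the \emph{entire} two-sided bar complex to degenerate, which requires the full $E_\infty$-algebra $q_*1$ to be formal, not just that the multiplication vanishes on homotopy groups. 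The relevant mapping spaces in $\DA(k)$ are not contractible (e.g.\ $\Hom(\Q(-2)[-2],\Q(-1)[-1])\cong k^\times\otimes\Q$ and its higher homotopy are typically nonzero), so the coherence of the nullhomotopies is a genuine input, not bookkeeping. Moreover \Cref{rmk:UDA} and the Ayoub references cited there only establish the splitting $q_*1\cong 1\oplus 1(-1)[-1]$ as an object, not the multiplicative structure; so the crucial formality statement must be sourced or proved independently. You should also actually verify, rather than assert, that the primitive component of your bar comultiplication agrees with the connecting map of the cofiber sequence. The compactness/nilpotence argument at the end, and the projection-formula computation $I\otimes_{\chi1}M\simeq\Psi M(-1)[-1]$, are correct once the $\epsilon$-module structure on $I$ is in place.
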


\begin{proof}As in \Cref{rmk:UDA} we let $q\colon\G_{m,k}\to\Spec k$ be the structural morphism and  $\UDA(k)$ be the full subcategory of $\DA(\G_{m,k})$ generated under colimits by motives of the form $q^*M$ for $M\in\DA(k)$. It can be identified with the category $\Mod_{q_*1}(\DA(k))$ and hence with $\RigDA_{\gr}(K)$ (see \Cref{rmk:UDA}). The restriction of the (monoidal) base-change functor $1^*\colon\DA(\G_m)\to\DA(k)$ to $\UDA(k)^{\ct}$ is the functor induced by the algebra morphism $q_*1\to1$ in $\DA(k)$ so it coincides with the functor $\Psi$ above, modulo the above identifications.

By the last part of the proof of \cite[Scholie 1.3.26.2]{ayoub-rig} we may also identify it with the functor $\Upsilon_{\id}$ (see, for example, \cite[Section 10]{ayoub-etale} for the definition). 
The existence of the monodromy operator follows then from  %
 \cite[Th\'eor\`eme 11.16]{ayoub-etale}.%
\end{proof}

\begin{rmk}\label{eK2}
	Had we chosen to identify $\chi1$ with $1\oplus1(-1)[-1]$ via a uniformiser of $K$, then $N$ would have been multiplied by the constant $e_K$ (see \Cref{eK1}).
\end{rmk}

\begin{rmk}
		{In \cite[Theorem 1.7]{BGV},	it is shown that} that the $K_0$-overconvergent de Rham realization $R\Gamma_{K_0,\dR}$ on $\RigDA_{\gr}(K)$ factors as $R\Gamma_\rig(-/K_0)\circ\Psi$, and hence the monodromy operator on the motivic nearby cycle induces a canonical monodromy operator on $R\Gamma_{K_0,\dR}$  (see more precisely \Cref{rmk:future}). We now prove this fact on the graded pieces of the weight filtration. 
\end{rmk}

\subsection{Weight structures}
		We shall construct the monodromy operator on the graded pieces of the weight filtration of each cohomology group via a factorization over the nearby cycle functor. To make this precise, 
		we recall the following definition, initially due to Bondarko and Pauksztello, then rephrased in the infinity-language by Sosnilo.
		\begin{dfn}[{\cite[Definition 1.13, Remark 1.14.1]{Sosnilo}}]
			Let $\mcC$ be a [closed symmetric monoidal] stable infinity-category. 	A \emph{[closed symmetric monoidal] bounded weight structure of $\mcC$ }is a choice of a full subcategory   $\mcH$ (the \emph{heart}) of $\mcC$ such that
			\begin{enumerate}
				\item it is closed under direct summands [and tensor products] in $\mcC$;
				\item it generates $\mcC$ under finite limits (or colimits);
				\item  $\map(X,Y)$ is {connective} for all $X,Y\in\mcH$ i.e. $\pi_i\map(X,Y)=0$ if ${i<0}$.
			\end{enumerate} 
			A functor between categories $\mcC\to\mcC'$ equipped with a {[closed symmetric monoidal] bounded weight structure} is called \emph{weight-exact }if the image of $\mcH$ lies in $\mcH'$.
		\end{dfn}

\begin{exm}\label{eg:ws}
		
			The following are examples of closed monoidal bounded weight structures.
			\begin{enumerate}
				\item %
				For an additive category $\mc A$, the infinity-category $\Ch^\flat(\mc A)$ of bounded chain complexes together with the full subcategory $\mc A$ embedded as complexes concentrated in degree zero is a  bounded weight structure by \cite[Section 1.3]{Sosnilo}. 
				\item {Let $\kappa$ be a field.} The Karoubi-closure $\mcP$ of the category of Chow motives, {i.e., objects in $\DA(\kappa)$ }of the form ${\Q}_\kappa(X)(i)[2i]$ with $i\in\Z$ and $X/\Spec \kappa$ smooth and proper  is a symmetric closed monoidal bounded weight structure  in $\DA(\kappa)^{\ct}$ by \cite[Proposition 6.5.3]{bondarko}.
				\item\label{UDA} {Let $K$ be a non-archimedian field.} The Karoubi-closure $\mcP_{K}$ of the category of  motives of the form $\xi M$ with $M\in\mcP$  defines a symmetric closed monoidal bounded weight structure on $\RigDA_{\gr}(K)^{\ct}$. To see this, notice that if $A,B$ are in $\mcP$ then $\map(\xi A,\xi B)\cong\map(A,B)\oplus\map(A,(B(-1)[-2])[1])$ (this follows from the identification of $\chi1$ given in the proof of \Cref{motivic nearby cycle}). The object $B(-1)[-2]$ lies in $\mcP$ so that the complex on the right is {connective}. %
			\end{enumerate}
		\end{exm}
		Let $(\mc C,\mc H)$ be a bounded weight structure. By \cite[Proposition 3.3.2]{Sosnilo} and \cite[Proposition 3.2.3]{Sosnilo2} (see also \cite{aoki}), the functor $\mcH\to\Ho\mcH$ induces a  [monoidal] weight-exact functor $
		\omega\colon \mcC%
		\to%
		\Ch^\flat(\Ho\mcH)
		$, which is unique up to homotopy.
		\begin{dfn} The functor $\omega$ is called the weight-complex functor. For an object $X\in\mcC$, we write
			\be
			\omega(X)= \left(\ldots\to X^{(-1)}\to X^{(0)}\to X^{(1)}\to \ldots \right),
			\ee
			where each $X^{(i)}$ is in $\mcH$.  
		\end{dfn}
		
There are situations where the weight complex is enough to recover the cohomology theory, equipping it with a weight filtration, as the following proposition by Bondarko \cite{bondarko} shows (also relevant \cite[Section 3]{Vol13}).
				\begin{prop}[{\cite[Theorem 2.4.2 and Remark 2.4.3(1)]{bondarko}}]
			\label{bondw}
			Let $A$ be an abelian category, $(\mcC,\mcH)$ be a bounded weight structure, and $R\Gamma\colon \mcC\to\mcD(A)^{\op}$ be a functor. 
			\begin{enumerate}
			\item There is a  spectral sequence, {functorial from page $E_2$}
			\be
			E_1^{p,q}=H^q(R\Gamma(X^{(-p)}))\Rightarrow H^{p+q}R\Gamma(X)
			\ee
			whose differentials at page $E_1$ are defined by the differentials of the weight complex. 
			\item We assume that $A$ is equipped with abelian subcategories $\{A_{\omega=i}\subset A\}_{i\in\Z}$ that are closed under subquotients and orthogonal and that $H^i(R\Gamma(M))\in A_{\omega=i}$ for each $M\in\mcH$. 			Then the above spectral sequence degenerates at $E_2$, and its induced graded pieces $\gr^\omega_iH^n$ lie in $A_{\omega=i}$. 

			\end{enumerate}
		\end{prop}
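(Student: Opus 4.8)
The plan is to produce the spectral sequence from the weight Postnikov tower underlying the weight complex, and then to read off the degeneration in~(2) from the orthogonality of the subcategories $A_{\omega=i}$.

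For~(1) I would start from the fact that, since the weight structure is bounded, every object $X\in\mcC$ admits a \emph{finite} weight Postnikov tower: a finite sequence of cofiber sequences in $\mcC$ whose successive cofibers are shifts of the heart objects $X^{(-p)}$ and whose connecting maps assemble to the differential of the weight complex $\omega(X)$. This is precisely what is packaged in the construction of the weight-complex functor $\omega$ (\cite{Sosnilo,Sosnilo2,bondarko}, cf.\ \cite{aoki,Vol13}). Applying the exact functor $R\Gamma$ to this tower carries cofiber sequences in $\mcC$ to fiber sequences in $\mcD(A)^{\op}$, hence exhibits $R\Gamma(X)\in\mcD(A)$ as a finite filtered object whose successive (co)fibers are shifts of the $R\Gamma(X^{(-p)})$. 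The standard spectral sequence of such a finite filtered object in a derived category is then exactly
\[
E_1^{p,q}=H^{q}\big(R\Gamma(X^{(-p)})\big)\Longrightarrow H^{p+q}R\Gamma(X),
\]
with strong convergence because only finitely many columns are nonzero; and by construction its $d_1$ differential $E_1^{p,q}\to E_1^{p+1,q}$ is $H^q$ applied to $R\Gamma$ of the differential $X^{(-p-1)}\to X^{(-p)}$ of $\omega(X)$, as asserted. Functoriality (from the $E_2$-page on, as usual) is inherited from the functoriality up to homotopy of $\omega$.

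For~(2): each $X^{(-p)}$ lies in $\mcH$, so the hypothesis gives $E_1^{p,q}=H^q(R\Gamma(X^{(-p)}))\in A_{\omega=q}$, and since $A_{\omega=q}$ is closed under subquotients every later page also satisfies $E_r^{p,q}\in A_{\omega=q}$. For $r\ge 2$ the differential $d_r\colon E_r^{p,q}\to E_r^{p+r,\,q-r+1}$ is a morphism in $A$ from an object of $A_{\omega=q}$ to an object of $A_{\omega=q-r+1}$, with $q\neq q-r+1$; the orthogonality of the family $\{A_{\omega=i}\}$ therefore forces $d_r=0$. Hence the spectral sequence degenerates at $E_2$, and the induced (weight) filtration on $H^nR\Gamma(X)$ has graded pieces $\gr^\omega_iH^n=E_2^{\,n-i,\,i}$, which are subquotients of $H^i(R\Gamma(X^{(i-n)}))$ and so lie in $A_{\omega=i}$.

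I expect the degeneration step to be the routine part — it is just the orthogonality argument above. The point needing care is the first step: one must know that the weight Postnikov tower exists and is functorial up to homotopy in the $\infty$-categorical framework, and that its connecting morphisms genuinely realize the differentials of $\omega(X)$ (so that the $d_1$-description is correct), for which I would rely on the weight-complex machinery of \cite{Sosnilo,Sosnilo2} (cf.\ \cite{aoki}). Granting that, the construction of the spectral sequence and the index bookkeeping in~(1), and all of~(2), present no further difficulty.
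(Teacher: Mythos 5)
Your proposal is correct and reconstructs exactly the argument from Bondarko's paper (Theorem 2.4.2 and Remark 2.4.3(1)) that the proposition cites; the paper itself does not reprove the result but simply refers to that reference. Your two steps — the weight Postnikov tower yielding a finite filtered object after applying $R\Gamma$, hence a strongly convergent spectral sequence with $d_1$ given by the weight-complex differentials, and then the orthogonality of the $A_{\omega=i}$ killing $d_r$ for $r\ge 2$ — are precisely the content of Bondarko's proof, and the index bookkeeping $\gr^\omega_iH^n=E_2^{n-i,i}$ is as you state.
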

			
			In particular, the bi-graded object $\bigoplus\gr_iH^n(R\Gamma(X))$ only depends on $\omega(X)$. 
			More precisely, the functor $\gr^\omega_*H^*R\Gamma\colon \mcC\to %
			\gr_{\Z\times\Z}(A)$ factors as 
			\be
			\mcC\stackrel{\omega}{\to}\Ch(\Ho\mcH)\to  \gr_{\Z\times\Z}(A),
			\ee
			where the second functor is given by 
			\be
			M^\bullet\mapsto \bigoplus_{n,i}H^{n-i}(\cdots\to H^iR\Gamma(M^{-p})\to H^iR\Gamma(M^{-p-1})\to \cdots).\ee 

\begin{exm}\label{Frobenius on dR}\label{mwss for dR}
Let $K$ be a complete discrete valuation field with perfect residue field. 	The Frobenius-enrichment $F\colon \DA(k)\to\DA(k)^{h\varphi}_\omega $ on the category $\DA(k)$ (see \Cref{rmk:laxfixedpoints}) induces
	\be
	\Mod_{\chi_01}(\DA(k)^{\ct})\to \Mod_{F(\chi_01)}(\DA(k)^{\ct,h\varphi})
    \cong
    \RigDA_{\gr}(K_0)^{\ct,h\varphi^\sharp},
	\ee
	where 
    {$\varphi^\sharp$ is the automorphism on $\RigDA_{\gr}(K_0)$ obtained via a tilting equvalence $\RigDA_{\gr}(K_0)\simeq \RigDA_{\gr}(k(\!(T)\!))$ and the Frobenius on $k(\!(T)\!)$ {(see \cite[Proposition 4.49]{BGV}}). }
	By applying  the {de Rham} realization  we then obtain {(see \cite[Proposition 4.52]{BGV})}
	\be
	R\Gamma_{\dR,K_0}\colon \RigDA_{\gr}(K)^{\ct}\to \RigDA_{\gr}(K_0)^{\ct,h\varphi^\sharp}
    \to \QCoh(\Spa K_0)^{\ct,h\varphi}\cong\mcD_\varphi(K_0)^{\ct}.
	\ee
		When the residue field $k$ of $K$ is finite, this satisfies the hypotheses of \Cref{bondw}.(2): 
if $Y$ is a smooth and proper variety over $k$, the $i$-th cohomology group $H^i_{\dR,K_0}(\xi \Q_{K_0}({Y}))=H^i_\rig(Y/K_0)$ is $\varphi$-pure of weight $i$. Thus, for any $X\in\RigDA_{\gr}(K)^{\ct}$, each $H^i_{\dR,K_0}(X)$ comes equipped with a functorial weight filtration. 		\end{exm}
\subsection{Motivic monodromy operator}
Let $K$ be a non-archimedean field.	The Monsky--Washnitzer functor $\xi\colon\DA(k)^{\ct}\to \RigDA(K)_{\gr}^{\ct}$ {is} weight-exact by definition. More precisely, we have:
		\begin{prop}\label{xi wex}
		The functors $\Ho \mc P\to \Ho \mc P_K$ and $\Ho \mc P_K\to \Ho\mc P$ induced by $\xi$ and $\Psi$ are quasi-inverse to each other. In particular, we have a commutative square of monoidal functors
	\be
\xymatrix{
	\RigDA_{\gr}(K)^{\ct} \ar[r]\ar[d]^{\Psi} & \Ch^\flat(\Ho\mcP_K)\ar[d]^{\Psi}_{\sim}\\
	\DA(k)^{\ct}\ar[r]& \Ch^\flat(\Ho\mcP).
}
\ee%
		\end{prop}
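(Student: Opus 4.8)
The plan is to reduce the statement to two essentially independent facts: first, that $\Psi\circ\xi\cong\id$ on the heart $\Ho\mcP$, and second, that $\xi\circ\Psi\cong\id$ on the heart $\Ho\mcP_K$, since a weight-exact functor is determined up to homotopy by its restriction to the heart. The commutative square at the end will then be a formal consequence, because the vertical weight-complex functors $\omega$ are functorial and the squares of weight-exact functors commute by the uniqueness (up to homotopy) of $\omega$ recorded after \Cref{eg:ws}. So the real content is the statement about the hearts.

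First I would recall, as in the proof of \Cref{motivic nearby cycle}, that the choice of the uniformiser $p$ of $K_0$ identifies $\chi1\cong\chi_01\cong 1\oplus 1(-1)[-1]=q_*1$ in $\DA(k)$, so that $\RigDA_{\gr}(K)^{\ct}\cong\Mod_{\DA(k)^{\ct}}(q_*1)$ and $\Psi$ is the base change along the augmentation $q_*1\to 1$, i.e. $\Psi(\xi M)=\Psi(M\otimes q_*1)=M$ for $M\in\DA(k)^{\ct}$, where I use that $\xi$ is (up to the identification) the functor $M\mapsto M\otimes q_*1$ (extension of scalars along $1\to q_*1$) and $\Psi$ its base change back along $q_*1\to1$. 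This gives $\Psi\circ\xi\cong\id$ on all of $\DA(k)^{\ct}$, in particular on $\Ho\mcP$, and $\xi$ sends $\mcP$ into $\mcP_K$ by the very definition of $\mcP_K$ in \Cref{eg:ws}\eqref{UDA}. It remains to see that $\Psi$ sends $\mcP_K$ into $\mcP$: by definition every object of $\mcP_K$ is a direct summand of a finite sum of $\xi A$ with $A\in\mcP$, and $\Psi(\xi A)\cong A\in\mcP$, while $\mcP$ is Karoubi-closed, so $\Psi(\mcP_K)\subseteq\mcP$.

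For the other composite, take $B\in\mcP_K$, a summand of $\bigoplus_j \xi A_j$. Applying $\xi\circ\Psi$ we get a summand of $\bigoplus_j \xi\Psi\xi A_j\cong\bigoplus_j\xi A_j$; so the issue is purely to produce the natural equivalence $\xi\Psi\Rightarrow\id$ on $\mcP_K$, compatibly with these summand inclusions, rather than merely an abstract isomorphism on objects. The natural counit of the (extension, base change) adjunction, or rather the fact that $q_*1\to 1$ is a retraction of $1\to q_*1$, furnishes on any $q_*1$-module $N$ a natural map $N\to N\otimes_{q_*1}1\otimes q_*1=\xi\Psi N$ coming from $q_*1\to 1$ and a natural map the other way from $1\to q_*1$; on $N=\xi A$ these realize the splitting $\Map(\xi A,\xi B)\cong\Map(A,B)\oplus\Map(A,B(-1)[-2])[1]$ computed in \Cref{eg:ws}\eqref{UDA}, and I would check that on the heart the ``extra'' summand $\Map(A,B(-1)[-2])[1]$ is $(-1)$-coconnective, hence carries no $\pi_0$, so that up to homotopy there is a \emph{canonical} isomorphism $\xi\Psi B\cong B$ for $B$ in $\Ho\mcP_K$, natural in $B$. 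This is the main obstacle: one must see that the monodromy-type correction term genuinely vanishes at the level of $\Ho\mcP_K$, which is exactly the computation that makes $\mcP_K$ a weight structure in the first place, so it is really a matter of packaging \Cref{eg:ww}\eqref{UDA}'s mapping-space computation into a natural equivalence of functors $\Ho\mcP_K\to\Ho\mcP$ and back. Once $\xi$ and $\Psi$ are mutually quasi-inverse on the hearts, both are monoidal (by \Cref{Prloo}-type arguments and the monoidality of $\xi$ and of base change), and the commuting square of weight-complex functors follows by transporting $\omega$ through these equivalences.
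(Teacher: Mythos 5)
Your proposal contains the right computation (the vanishing of $\pi_0$ of the correction term in the mapping-space formula of \Cref{eg:ws}\eqref{UDA}), but the way you try to exploit it in the second half has a gap. You attempt to manufacture a natural transformation $\id\Rightarrow\xi\Psi$ (or its inverse) on all of $\Mod_{q_*1}$ from ``the natural counit of the (extension, base change) adjunction'' applied to the retraction $1\to q_*1\to 1$. But $\xi$ and $\Psi$ are \emph{both} extension-of-scalars functors, along the unit $\eta\colon 1\to q_*1$ and the augmentation $\epsilon\colon q_*1\to 1$ respectively; they are not adjoint to each other, and $\xi\Psi$ is extension of scalars along the idempotent ring endomorphism $\eta\epsilon\colon q_*1\to q_*1$, which admits no natural comparison map with the identity on the full module category. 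So the map $N\to\xi\Psi N$ you posit does not exist for general $N$, and you are left (as you yourself note) needing to ``package'' the mapping-space computation into a naturality statement without a mechanism that produces it.

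The paper's route avoids this entirely: from $\pi_0\Map(\xi A,\xi B)\cong\Hom(A,B)$ one concludes directly that $\xi\colon\Ho\mcP\to\Ho\RigDA_{\gr}(K)^{\ct}$ is fully faithful, and since $\Ho\mcP$ is Karoubi-closed this forces $\xi\mcP$ to be closed under retracts, i.e.\ $\mcP_K=\xi\mcP$ and $\xi\colon\Ho\mcP\xrightarrow{\sim}\Ho\mcP_K$ is an equivalence. Once that is in place, no explicit natural transformation $\xi\Psi\Rightarrow\id$ needs to be constructed: $\Psi\xi\cong\id$ together with $\xi$ being an equivalence already forces $\Psi$ to be the quasi-inverse (formally, $\Psi\cong\Psi\xi\xi^{-1}\cong\xi^{-1}$ on $\Ho\mcP_K$). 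I would rewrite your argument to first deduce full faithfulness of $\xi$ on the heart from the coconnectivity estimate, then deduce $\mcP_K=\xi\mcP$, and then let the quasi-inverse statement fall out formally; this also makes the commutativity of the displayed square of weight-complex functors an immediate consequence of the uniqueness (up to homotopy) of $\omega$, as you indicate.
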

		\begin{proof}
			As already remarked in \Cref{eg:ws}\eqref{UDA}, for any $A,B$  in $\mcP$ there is an equivalence  $\Hom(\xi A,\xi B)\cong\Hom(A,B)$. This shows that $\xi\mcP$ is Karoubi-closed and that   $\Ho\mcP\cong\Ho\mcP_{K}$ via $\xi$. This proves the statement  since $\Psi\xi\cong\id$.%
		\end{proof}
				\begin{rmk}
		If $K$ is algebraically closed,	the analytification functor $\DA(K)^{\ct}\to\RigDA(K)^{\ct}$ is \emph{not} weight exact with respect to the structures described above. 
		\end{rmk}

	We now use Propositions \ref{motivic nearby cycle} and \ref{xi wex} to endow the weight complex of an object $X$ in $\RigDA_{\gr}(K)^{\ct}$ with a monodromy operator. 

	We define the Tate twist functor $C^\bullet\mapsto C^\bullet(-1)$ on $\Ch^b(\mc P)$ by $C^\bullet(-1)=(C(-1)[-2]^\bullet )[2]$, more precisely, the $p$-th component of $C^\bullet(-1)$ is given by $C^{p+2}(-1)[-2]$. We remark that if $C$ lies in $\mcP$, then so does $C(-1)[-2]$. Then, for any object $M$ in $\RigDA_\gr(K)$, we have a functorial isomorphism $\omega(M(-1))\cong \omega(M)(-1)$. 
	\begin{cor}\label{N on weight complex}
For any object $M$ in $\RigDA_\gr(K)^{\ct}$, its weight complex $\omega(M)$ is canonically equipped with a monodromy operator $\omega(M)\to\omega(M)(-1)$, %
{i.e.,} the weight complex functor factors as
\be
\RigDA_{\gr}(K)^{\ct}\to (\Ch^\flat(\Ho\mcP_K))_N\to \Ch^\flat(\Ho\mcP_K).
\ee
	\end{cor}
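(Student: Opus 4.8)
The plan is to assemble the statement formally from \Cref{motivic nearby cycle} and \Cref{xi wex}: the monodromy operator that \Cref{motivic nearby cycle} produces on the motivic nearby cycle will be pushed through the weight-complex functor and then transported back along the weight-exact equivalence $\Ho\mcP_K\simeq\Ho\mcP$ of \Cref{xi wex}. First I would record the Tate-twist compatibilities. Since $\otimes\Q(-1)[-2]$ preserves the hearts $\mcP$ and $\mcP_K$, it is weight-exact on $\DA(k)^{\ct}$ and on $\RigDA_{\gr}(K)^{\ct}$, so by uniqueness of the weight-complex functor it commutes, up to a canonical equivalence, with $\omega$ on both sides; together with the shift this yields the functorial identifications $\omega(M(-1))\cong\omega(M)(-1)$ used in the statement (and the analogous one in $\DA(k)$). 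Because $\xi$ and $\Psi$ are monoidal and fix $\Q(1)$, the induced equivalences between the bounded-complex categories also commute with the twist functor $C^\bullet\mapsto C^\bullet(-1)$ defined just before the corollary.

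Next, for a compact $M$ in $\RigDA_{\gr}(K)^{\ct}$, \Cref{motivic nearby cycle} equips $\Psi M$ with a canonical, functorial morphism $N\colon\Psi M\to\Psi M(-1)$; equivalently, $\Psi$ factors through the category $\DA(k)^{\ct}_N$ of compact comodules over the free coalgebra $\bigoplus_n\Q(-n)$ on $\Q(-1)$. I would then apply the (monoidal, twist-compatible) weight-complex functor $\omega\colon\DA(k)^{\ct}\to\Ch^\flat(\Ho\mcP)$ to this datum, obtaining a morphism $\omega(\Psi M)\to\omega(\Psi M)(-1)$ in $\Ch^\flat(\Ho\mcP)$, natural in $M$, i.e.\ a lift of $\omega\circ\Psi$ through $\CoMod_N(\Ch^\flat(\Ho\mcP))$. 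The commutative square of \Cref{xi wex} identifies $\omega(\Psi M)$ with $\Psi(\omega(M))$, so applying the inverse equivalence $\xi\colon\Ch^\flat(\Ho\mcP)\xrightarrow{\ \sim\ }\Ch^\flat(\Ho\mcP_K)$ — which commutes with twists — transports the operator to a canonical morphism $\omega(M)\to\omega(M)(-1)$, giving the asserted factorization
\[
\RigDA_{\gr}(K)^{\ct}\xrightarrow{\ \Psi\ }\DA(k)^{\ct}_N\xrightarrow{\ \omega\ }\CoMod_N(\Ch^\flat(\Ho\mcP))\xrightarrow{\ \sim\ }\CoMod_N(\Ch^\flat(\Ho\mcP_K)),
\]
the last arrow being induced by $\xi$; its composition with the forgetful functor to $\Ch^\flat(\Ho\mcP_K)$ recovers $\omega$, again by the commutativity of the square in \Cref{xi wex}.

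The whole argument is bookkeeping, and the one place where I would spend most of the effort is checking that $\omega$, $\xi$, $\Psi$ and the operator $N$ all interact with Tate twists coherently enough that a single morphism $N\colon\Psi M\to\Psi M(-1)$ genuinely assembles — functorially in $M$, with no higher obstruction — into a comodule structure over $\bigoplus_n\Q(-n)$ on $\omega(M)$. Here it helps that this coalgebra is essentially cofree: a comodule structure is the pure datum of one morphism to the Tate twist, with counit and coassociativity constraints automatic, so no genuinely new coherence data is required beyond naturality of $N$, which is already part of \Cref{motivic nearby cycle}. I do not expect a serious obstacle beyond this.
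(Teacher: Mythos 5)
Your proposal is correct and takes essentially the same route as the paper's own (one-line) proof: the paper reduces to the observation that the left vertical arrow $\Psi$ in the square of \Cref{xi wex} factors through $\CoMod_N(\DA(k))$, and your argument simply spells out how that factorization is transported across the square using the twist-compatibility of $\omega$ and of the equivalence $\Ch^\flat(\Ho\mcP)\simeq\Ch^\flat(\Ho\mcP_K)$.
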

\begin{proof}
	This follows from the fact that the left arrow in the commutative square from Proposition \Cref{xi wex} factors over $\DA(k)_N$.
\end{proof}
\begin{rmk}
As $C^\bullet$ is bounded, any monodromy operator $C^\bullet\to (C(-1)[-2]^\bullet)[2]$ is nilpotent. 
\end{rmk}

Finally, we apply the above observations to $K_0$-overconvergent de Rham cohomology $R\Gamma_{\dR,K_0}\colon \RigDA_\gr(K)\to \mc D_{\varphi}(K_0)$. 
By \Cref{N on weight complex}, the realization 
\be
\RigDA_{\gr}(K)^{\ct}\to \mcD_\varphi(K_0)^{\ct}\to \gr_{\Z\times\Z}\Mod_\varphi(K_0)
\ee
from \Cref{mwss for dR} canonically factors through $(\Ch^\flat(\mcP))_N$. 
More concretely, for any object $X$ in $\RigDA_\gr(K)$, the monodromy operator $\omega(X)\to \omega(X)(-1)$ from \Cref{N on weight complex} induces a (functorial) monodromy operator on the bi-graded object $\bigoplus_{n,i}\gr_iH^n_{\dR,K_0}(X)$: we obtain, by taking $(p+2)$-th components, a morphism $X^{(-p-2)}\to X^{(-p)}(-1)[-2]$ in $\mc P$; by taking the $(q-2)$-th rigid cohomology groups 
		\be
	N\colon  H^q_{\rig}(X^{(-p)})(1)\to H^{q-2}_{\rig}(X^{(-p-2)}),
		\ee
inducing a (nilpotent) monodromy operator $N\colon \gr^\omega_i(H_{\dR,K_0}^n(X))\to \gr^\omega_{i-2}(H_{\dR,K_0}^n(X))(1)$. 
In the semistable reduction case, the spectral sequence above is expected to be compared, in an appropriate sense, with the classical weight spectral sequence considered by Mokrane \cite{mokrane} (see also Nakkajima \cite{Nakkajima}) and Gro\ss e-Kl\"onne \cite{GK05} { (compare to \cite[\S 4.9]{BGV})}.

	\begin{rmk}
The Frobenius structure on $\RigDA(K)$ can be encoded more geometrically via the equivalence $	\RigDA(K) \cong \RigDA(\mathrm{Spd} K)$ of \cite[Theorem 5.13]{LBV}.
	\end{rmk}
		
		\begin{rmk}
Note that the formalism of weight-structures gives a positive answer to the ``generalized'' weight-monodromy conjecture in equi-characteristic $p$	of Lazda--Pal \cite[Conjecture 5.55]{LP16} by taking as ``geometric'' weight filtration the filtration on cohomology induced by the weight-complex.
		\end{rmk}
		
		\begin{rmk}\label{motmon}\label{rmk:future}
The category $\RigDA_{\gr}(K)\cong\Mod_{\chi1}(\DA(k))$ is {actually} equivalent to the category ${ \DA(k)_N}$ (see \cite[Corollary 4.14]{BGV}) i.e.,   its compact objects are given by ``motivic monodromy maps'' $M\to M(-1)$ in $\DA(k)$ (which are necessarily nilpotent, using weight considerations). %
			This  {implies} that any monoidal realization $\RigDA_{\gr}(K)\to\mcC$ can be enriched with a monodromy operator $\RigDA_{\gr}(K)\to\mcC_N$, {which recovers the usual monodromy operator,} and that $\Psi$ corresponds to the functor ``forgetting'' monodromy. This is an infinity-version of the constructions above, which 
            {is considered in \cite{BGV}.}%
		\end{rmk}

\end{document}